\newcommand\Z{\mathbb{Z}}
\newcommand\N{\mathbb{N}}
\newcommand\kk{{\Bbbk}}
\newcommand\id{\mathrm{id}}
\newcommand\rat{\mathrm{rat}}
\newcommand\ev{\mathrm{ev}}
\newcommand\rss{\mathrm{ss}}
\newcommand\ab{\mathrm{ab}}
\newcommand\g{\mathfrak{g}}
\newcommand\h{\mathfrak{h}}
\newcommand\n{\mathfrak{n}}
\newcommand\fb{\mathfrak{b}}
\newcommand\fsl{\mathfrak{sl}}
\newcommand\fl{\mathfrak{l}}
\newcommand\sm{\mathsf{m}}
\newcommand\sM{\mathsf{M}}
\newcommand\cB{\mathcal{B}}
\newcommand\cL{\mathcal{L}}
\newcommand\cR{\mathcal{R}}
\newcommand\cE{\mathcal{E}}
\newcommand{\ts}{\textstyle}
\DeclareMathOperator{\Hom}{Hom}
\DeclareMathOperator{\End}{End}
\DeclareMathOperator{\Spec}{Spec}
\DeclareMathOperator{\maxSpec}{maxSpec}
\DeclareMathOperator{\Ann}{Ann}
\DeclareMathOperator{\Supp}{Supp} 
\DeclareMathOperator{\rad}{rad}
\DeclareMathOperator{\hgt}{ht}
\theoremstyle{plain}
\newtheorem{theo}{Theorem}[section]
\newtheorem*{theo*}{Theorem}
\newtheorem{prop}[theo]{Proposition}
\newtheorem{lem}[theo]{Lemma}
\newtheorem{cor}[theo]{Corollary}
\theoremstyle{definition}
\newtheorem{defin}[theo]{Definition}
\newtheorem*{rem*}{Remark}
\newtheorem{rem}[theo]{Remark}
\newtheorem{example}[theo]{Example}
\numberwithin{equation}{section}
\begin{document}
%

\title{Equivariant map superalgebras}

\author{Alistair Savage}
\address{Department of Mathematics and Statistics, University of Ottawa}
\email{alistair.savage@uottawa.ca}
\thanks{This work was supported by a Discovery Grant from the Natural Sciences and Engineering Research Council of Canada.}

\subjclass[2010]{17B65, 17B10}


\keywords{Lie superalgebra, basic classical Lie superalgebra, loop superalgebra, equivariant map superalgebra, finite dimensional representation, finite dimensional module}

\begin{abstract}
  Suppose a group $\Gamma$ acts on a scheme $X$ and a Lie superalgebra $\g$.  The corresponding \emph{equivariant map superalgebra} is the Lie superalgebra of equivariant regular maps from $X$ to $\g$.  We classify the irreducible finite dimensional modules for these superalgebras under the assumptions that the coordinate ring of $X$ is finitely generated, $\Gamma$ is finite abelian and acts freely on the rational points of $X$, and $\g$ is a basic classical Lie superalgebra (or $\mathfrak{sl}(n,n)$, $n \ge 1$, if $\Gamma$ is trivial).  We show that they are all (tensor products of) generalized evaluation modules and are parameterized by a certain set of equivariant finitely supported maps defined on $X$.  Furthermore, in the case that the even part of $\g$ is semisimple, we show that all such modules are in fact (tensor products of) evaluation modules.  On the other hand, if the even part of $\g$ is not semisimple (more generally, if $\g$ is of type I), we introduce a natural generalization of Kac modules and show that all irreducible finite dimensional modules are quotients of these.  As a special case, our results give the first classification of the irreducible finite dimensional modules for twisted loop superalgebras.
\end{abstract}

\maketitle \thispagestyle{empty}

\tableofcontents

%
\section{Introduction}
%

Lie superalgebras, which are generalizations of Lie algebras to include a $\Z_2$-grading, are of considerable interest to both mathematicians and physicists.  Significant mathematical progress has been made in the theory of these algebras.  For example, Kac has classified the simple finite dimensional Lie superalgebras over an algebraically closed field of characteristic zero in \cite{Kac77} and the irreducible finite dimensional representations of the so-called basic classical Lie superalgebras in \cite{Kac77c,Kac77,Kac78}.  Nevertheless, the general theory of Lie superalgebras and their representations still remains much less developed than the corresponding theory of Lie algebras.  For example, while tremendous progress has been made in the classification of irreducible finite dimensional representations of (twisted) loop algebras and their generalizations (equivariant map algebras), the analogous theory in the super case is in its infancy.

In the current paper, we consider a certain important class of Lie superalgebras.  Let $X$ be a scheme and let $\g$ be a ``target'' Lie superalgebra, both defined over an algebraically closed field of characteristic zero. Furthermore, let $\Gamma$ be a finite group acting on $X$ and $\g$ by automorphisms.  Then the Lie superalgebra $(\g \otimes A)^\Gamma$ of $\Gamma$-equivariant regular maps from $X$ to $\g$ is called an \emph{equivariant map superalgebra}.  In the case that $\g$ is, in fact, a Lie algebra, we call it an \emph{equivariant map algebra}.

A special important case of the above construction is when $X$ is the one-dimensional torus.  In this case, $(\g \otimes A)^\Gamma$ is called a \emph{twisted loop (super)algebra} when $\Gamma$ acts nontrivially and an \emph{untwisted loop (super)algebra} in the case that $\Gamma$ acts trivially.  Loop (super)algebras and their twisted analogues play an vital role in the theory of affine Lie (super)algebras and quantum affine algebras.  They are also an important ingredient in string theory.  In the non-super case, their representation theory is fairly well developed.  In particular, the irreducible finite dimensional representations were classified by Chari and Pressley (\cite{Cha86,CP86,CP98}).  Subsequently, many generalizations of this work (also in the non-super case) have appeared in the literature, for example, in \cite{Bat04,CFK,CFS08,CM04,FL04,Lau10,Li04,Rao93,Rao01}.  In \cite{NSS09}, the irreducible finite dimensional representations were classified in the completely general setting of equivariant map algebras.  There it was shown that all such representations are tensor products of evaluation representations and one-dimensional representations.  In \cite{NS11}, the extensions between these representations were computed and the block decompositions of the categories of finite dimensional representations were described.

Despite the above-mentioned progress in our knowledge of the representation theory of equivariant map algebras, relatively little is known if the target $\g$ is in fact a Lie superalgebra (with nonzero odd part).  Perhaps the best starting point is when $\g$ is one of the \emph{basic classical Lie superalgebras} (see Section~\ref{subsec:Lie-superalg}), since these have many properties in common with semisimple Lie algebras.  In the special case of untwisted multiloop superalgebras with basic classical target, the irreducible finite dimensional representations have been classified in \cite{RZ04,Rao11}.  However, beyond this, almost nothing is known.  For example, even the irreducible finite dimensional representations of \emph{twisted} loop superalgebras have not been classified.  This is in sharp contrast to the non-super case.

In the current paper we give a complete classification of the irreducible finite dimensional modules for an arbitrary equivariant map superalgebra when the target $\g$ is a basic classical Lie superalgebra and the group $\Gamma$ is abelian and acts freely on the set of rational points of a scheme $X$ with finitely generated coordinate algebra.  In particular, our classification covers the case of the twisted loop superalgebras (with basic classical target).  Our main result (Theorem~\ref{thm:classification-equivariant}) is that all irreducible finite dimensional modules are generalized evaluation modules and that they can be naturally parameterized by a certain set of equivariant finitely supported maps defined on $X$.  In fact, even more can be said.  In the case that the even part of $\g$ is semisimple, all irreducible finite dimensional modules are evaluation modules, just as for equivariant map algebras.  However, this is not the case if the even part of $\g$ is not semisimple.  In this situation (more generally, when $\g$ is of \emph{type I}), we introduce a natural generalization of \emph{Kac modules}, which are certain modules induced from modules for the equivariant map algebra $(\g_{\bar 0} \otimes A)^\Gamma$, where $\g_{\bar 0}$ is the even part of $\g$ (and hence a Lie algebra).  We then show that all irreducible finite dimensional modules can be described as irreducible quotients of these Kac modules.  In the untwisted setting (i.e.\ when $\Gamma$ is trivial), this classification also applies when $\g = \mathfrak{sl}(n,n)$, $n \ge 1$.

A natural generalization of the category of finite dimensional modules is the category of quasifinite modules (that is, modules with finite dimensional weight spaces).  In Theorem~\ref{thm:hw-quasifinite-classification}, we give a characterization of the quasifinite irreducible highest weight $(\g \otimes A$)-modules when $\g$ is any basic classical Lie superalgebra or $\mathfrak{sl}(n,n)$, $n \ge 1$.   We do \emph{not} assume that $A$ is finitely generated there.  Our characterization is similar to \cite[Prop.~5.1]{Sav11}, which describes the quasifinite modules for the map Virasoro algebras.

This paper is organized as follows.  In Section~\ref{sec:prelim} we review some results on commutative algebras and Lie superalgebras that we will need.  We introduce the equivariant map superalgebras in Section~\ref{sec:EMSAs}.  In Section~\ref{sec:quasifinite-hw} we discuss their quasifinite and highest weight modules and give a characterization of the quasifinite modules.  We define generalized evaluation modules and prove some important facts about them in Section~\ref{sec:eval-rep}.  In Section~\ref{sec:kac-modules}, we discuss a generalization of Kac modules to the setting of equivariant map superalgebras.  Finally, we classify the irreducible finite dimensional modules in the untwisted setting in Section~\ref{sec:classification-untwisted} and in the twisted setting in Section~\ref{sec:classification-equivariant}.

\medskip

\paragraph{\textbf{Notation}} We let $\N$ be the set of nonnegative integers and $\N_+$ be the set of positive integers.  Throughout, $\kk$ is an algebraically closed field of characteristic zero and all Lie superalgebras, associative algebras, tensor products, etc.\ are defined over $\kk$ unless otherwise specified.  We let $A$ denote the coordinate ring of a scheme $X$.  Thus $A$ is a commutative associative unital $\kk$-algebra.   We let $X_\rat$ denote the set of $k$-rational points of $X$.  Recall that $\sm \in \maxSpec A$ is a \emph{$k$-rational point} of $X$ if its residue field $A/\sm$ is $\kk$.  Thus $X_\rat \subseteq \maxSpec A$ and we have equality if $A$ is finitely generated.  We assume that $A$ is finitely generated in Sections~\ref{sec:eval-rep}--\ref{sec:classification-equivariant}.  When we refer to the \emph{dimension} of $A$, we are speaking of its dimension as a vector space over $\kk$ (as opposed to referring to a geometric dimension).  Similarly, when we say that an ideal $I$ of $A$ is of \emph{finite codimension}, we mean that $A/I$ is finite dimensional as a vector space over $\kk$.  We use the term \emph{reductive Lie algebra} only for finite dimensional Lie algebras.

\medskip

\paragraph{\textbf{Acknowledgements}}  The author would like to thank Shun-Jen Cheng, Daniel Daigle, Dimitar Grantcharov, Erhard Neher and Hadi Salmasian for helpful discussions.

\medskip

\paragraph{\textbf{Change log}} This document contains some small corrections to typographical errors that appear in the published version.  It was decided that these errors are not significant enough to warrant publishing an erratum, but that it would helpful to correct the arXiv version.  The changes are listed below.
\begin{itemize}
  \item In Section~\ref{subsec:Lie-superalg}, the statement that $\fsl(n,n)$ is a 1-dimensional central extension of $A(n,n)$ was corrected to state that $\fsl(n,n)$ is a 1-dimensional central extension of $A(n-1,n-1)$.
  \item In Table~\ref{table}, the range for $n$ for $\fsl(n,n)$ was changed from $n \ge 1$ to $n \ge 2$.
\end{itemize}

%
\section{Preliminaries} \label{sec:prelim}
%

In this section, we review some mostly well-known results on commutative associative algebras and Lie superalgebras that will be used in the sequel.  For the results on Lie superalgebras (Section~\ref{subsec:Lie-superalg}), we refer the reader to \cite{FSS00,Kac77,Kac77b} for further details.

\subsection{Commutative algebras} \label{subsec:comm-alg}

The \emph{support} of an ideal $I$ of an algebra $A$ is
\[
  \Supp I := \{\sm \in \maxSpec A\ |\ I \subseteq \sm\} \cong \maxSpec A/I.
\]
Note that the support of an ideal is often defined to be the set of prime (rather than maximal) ideals containing it.  So our definition is more restrictive.

\begin{lem} \label{lem:ideal-codim-implies-support}
  If $I$ is an ideal of finite codimension in an algebra $A$, then $I$ has finite support.
\end{lem}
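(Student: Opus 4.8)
The plan is to reduce to the structure theory of finite-dimensional commutative algebras. Since $I$ has finite codimension, $B := A/I$ is a finite-dimensional commutative associative unital $\kk$-algebra, and by the remark following the definition, $\Supp I \cong \maxSpec B$. So it suffices to show that a finite-dimensional commutative $\kk$-algebra $B$ has only finitely many maximal ideals.

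First I would observe that $B$ is Artinian (a finite-dimensional algebra satisfies the descending chain condition on ideals, since any strictly descending chain of ideals gives a strictly descending chain of $\kk$-subspaces, which must terminate by dimension count). Then I would invoke the standard structure theorem for Artinian rings: $B$ is a finite direct product $B \cong B_1 \times \cdots \times B_r$ of local Artinian rings. The maximal ideals of a finite product of local rings are exactly the preimages of the maximal ideals of the factors, so $B$ has exactly $r$ maximal ideals, hence finitely many.

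If one prefers to avoid quoting the full Artinian structure theorem, an elementary alternative works: distinct maximal ideals $\sm_1, \dots, \sm_k$ of $B$ are pairwise comaximal, so by the Chinese Remainder Theorem the natural map $B \to \prod_{i=1}^k B/\sm_i$ is surjective. Each $B/\sm_i$ is a field, hence nonzero, so $\dim_\kk B \ge \sum_{i=1}^k \dim_\kk(B/\sm_i) \ge k$. Therefore $B$ has at most $\dim_\kk B < \infty$ maximal ideals, which immediately gives $|\Supp I| = |\maxSpec B| \le \dim_\kk(A/I) < \infty$.

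I do not anticipate a genuine obstacle here; the only point requiring a little care is making sure the identification $\Supp I \cong \maxSpec(A/I)$ is used correctly (this is immediate from the definition, since maximal ideals of $A$ containing $I$ correspond bijectively to maximal ideals of $A/I$), and being explicit that "finite codimension" is precisely the hypothesis $\dim_\kk(A/I) < \infty$ that the counting argument needs. I would write up the Chinese Remainder Theorem version as the main argument since it is self-contained and gives the sharp bound $|\Supp I| \le \dim_\kk(A/I)$.
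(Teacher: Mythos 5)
Your proof is correct, and it takes a slightly different route from the paper. Both proofs reduce the problem to showing that the finite-dimensional algebra $B = A/I$ has finitely many maximal ideals, but the mechanisms differ. The paper considers pairwise distinct maximal ideals $\sm_1, \sm_2, \dots$ of $B$ and sets $I_n = \sm_1 \cdots \sm_n$; it then observes that $I_{n+1} \subsetneq I_n$ (which needs the small argument that if $\sm_1\cdots\sm_n \subseteq \sm_{n+1}$, then by primality some $\sm_i \subseteq \sm_{n+1}$ and hence $\sm_i = \sm_{n+1}$, a contradiction), so the strictly decreasing chain of subspaces $I_1 \supsetneq I_2 \supsetneq \cdots$ must terminate by a dimension count. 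Your argument instead uses pairwise comaximality plus the Chinese Remainder Theorem to get a surjection $B \twoheadrightarrow \prod_{i=1}^k B/\sm_i$, yielding the explicit bound $k \le \dim_\kk B$. Both are elementary dimension counts; yours has the mild advantage of producing the sharp inequality $|\Supp I| \le \dim_\kk(A/I)$ directly, while the paper's avoids invoking CRT at the cost of a short primality argument for strict containment. Your aside about the Artinian structure theorem is also fine, but you are right to prefer the self-contained CRT version.
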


\begin{proof}
  Let $I$ be an ideal of finite codimension in $A$.  It suffices to show that $A/I$ has a finite number of maximal ideals.  Let $\sm_1,\sm_2,\dots$ be a sequence of pairwise distinct maximal ideals of $A/I$.  Define $I_n = \sm_1 \cdots \sm_n$.  Then $I_{n+1} \subsetneq I_n$ and thus the length of the sequence $I_1, I_2, \dots$ is bounded by the dimension of $A/I$.
\end{proof}

\begin{lem} \label{lem:ideal-support-implies-codim}
  If $I$ is an ideal of finite support in a finitely generated algebra $A$, then $I$ is of finite codimension in $A$.
\end{lem}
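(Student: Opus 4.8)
The plan is to reduce to the Nullstellensatz together with the structure theory of Artinian rings. Let $I$ be an ideal of finite support in a finitely generated $\kk$-algebra $A$, and set $B = A/I$, which is again a finitely generated $\kk$-algebra whose maximal spectrum $\maxSpec B \cong \Supp I$ is finite, say $\maxSpec B = \{\sm_1, \dots, \sm_r\}$. First I would argue that $B$ is Artinian: since $B$ is a finitely generated algebra over a field it is Noetherian, and a Noetherian ring with only finitely many maximal ideals, all of which are minimal primes, has dimension zero, hence is Artinian. (To see every prime is maximal: any prime $\mathfrak{p}$ of $B$, being finitely generated over $\kk$, satisfies $\mathfrak{p} = \bigcap_{\sm \supseteq \mathfrak{p}} \sm$ by the Nullstellensatz; but there are only finitely many maximal ideals, so a chain $\mathfrak{p} \subsetneq \sm$ would force the Jacobson radical of the domain $B/\mathfrak{p}$ to be zero while $B/\mathfrak{p}$ has a unique — or finitely many — maximal ideals, which for a domain finitely generated over $\kk$ means $B/\mathfrak{p}$ is a field.)

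Next I would invoke the structure theorem for Artinian rings: $B \cong \prod_{i=1}^r B_{\sm_i}$, a finite product of Artinian local rings. So it suffices to show each Artinian local factor is finite dimensional over $\kk$. Each such factor is a finitely generated $\kk$-algebra that is a local ring of dimension zero, whose residue field is $\kk$ by the Nullstellensatz (here is where we use that $A$ — hence $B$, hence each localization — is finitely generated, so $X_\rat = \maxSpec$). Its maximal ideal $\sm_i B_{\sm_i}$ is nilpotent, say $\sm_i^N = 0$ in the factor, and each quotient $\sm_i^j / \sm_i^{j+1}$ is a finitely generated module over the field $B_{\sm_i}/\sm_i B_{\sm_i} = \kk$; a finitely generated vector space is finite dimensional, and summing over the finite filtration $B_{\sm_i} \supseteq \sm_i \supseteq \sm_i^2 \supseteq \cdots \supseteq \sm_i^N = 0$ shows $\dim_\kk B_{\sm_i} < \infty$. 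Adding up over the $r$ factors gives $\dim_\kk A/I < \infty$, as desired.

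The main obstacle is not any single hard step but making sure the finite-generation hypothesis is used correctly: it is needed both to guarantee $A/I$ is Noetherian (so that "finitely generated module over $\kk$" arguments apply and the Artinian structure theorem is available) and, via the Nullstellensatz, to identify the residue fields with $\kk$ so that the graded pieces of the $\sm$-adic filtration are honest $\kk$-vector spaces. Without finite generation the statement genuinely fails — one can have an ideal of finite (even empty) support but infinite codimension — so the proof must lean on it at the point where we pass from "finitely generated module" to "finite dimensional." I would state the Artinian structure theorem and Nullstellensatz as the background facts being cited and keep the write-up to the short filtration argument above.
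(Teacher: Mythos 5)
Your proof is correct, and it takes what is essentially the same route as the paper, which simply delegates this statement to a citation (\cite[Lem.~1.9.2]{Fu11}); you have spelled out the standard argument behind that cited lemma. The chain of ideas you use --- finitely generated implies Noetherian and Jacobson, finitely many maximal ideals plus Jacobson forces Krull dimension zero, hence $A/I$ is Artinian, and then a filtration by powers of the (nilpotent) maximal ideals with graded pieces that are finite dimensional $\kk$-vector spaces by the Nullstellensatz --- is the usual one. One small remark on phrasing: you do not actually need the \emph{localizations} $B_{\sm_i}$ to be finitely generated $\kk$-algebras in order to identify their residue fields with $\kk$; the residue field of $B_{\sm_i}$ is $B/\sm_i$, which equals $\kk$ already because $A$ is finitely generated and $\kk$ is algebraically closed. (That said, the localizations \emph{are} finitely generated here, being direct factors of $B$ by the Artinian decomposition, so nothing is wrong.) You could also avoid invoking the full Artinian structure theorem by filtering $B$ by powers of its nilradical $N = \sm_1 \cap \cdots \cap \sm_r$, using the CRT isomorphism $B/N \cong \kk^r$, but that is a cosmetic variant of the same argument.
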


\begin{proof}
  If $I$ is of finite support, then $A/I$ has finitely many maximal ideals.  Since $A$ is finitely generated, this implies that $A/I$ is finite dimensional (see, for example, \cite[Lem.~1.9.2]{Fu11}).
\end{proof}

\begin{lem} \label{lem:ideal-product-intersection}
  If $I$ and $J$ are ideals of an algebra $A$ and $I$ and $J$ have disjoint supports, then $IJ = I \cap J$.
\end{lem}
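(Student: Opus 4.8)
The plan is to prove the two inclusions separately. The inclusion $IJ \subseteq I \cap J$ is immediate and holds for any two ideals: a product $ij$ with $i \in I$, $j \in J$ lies in $I$ (since $J \subseteq A$, so $ij \in I$) and in $J$ (since $I \subseteq A$, so $ij \in J$), hence in $I \cap J$, and $IJ$ is the ideal generated by such products. So the content is in the reverse inclusion $I \cap J \subseteq IJ$, which is where the disjoint-support hypothesis enters.

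For the reverse inclusion I would first reduce to showing $I + J = A$, i.e. that $I$ and $J$ are comaximal. Granting this, pick $a \in I$ and $b \in J$ with $a + b = 1$. Then for any $x \in I \cap J$ we have $x = xa + xb$; here $xb \in IJ$ since $x \in I$ and $b \in J$, and $xa \in IJ$ since $x \in J$ and $a \in I$. Hence $x \in IJ$, giving $I \cap J \subseteq IJ$ and therefore equality.

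It remains to verify that disjoint supports force $I + J = A$. Suppose not; then $I + J$ is a proper ideal, so it is contained in some maximal ideal $\sm \in \maxSpec A$. But then $I \subseteq \sm$ and $J \subseteq \sm$, so $\sm \in \Supp I \cap \Supp J$, contradicting the assumption that $\Supp I$ and $\Supp J$ are disjoint. Hence $I + J = A$.

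The argument is essentially the classical Chinese Remainder Theorem mechanism, so there is no real obstacle; the only point requiring care is making sure the notion of ``support'' being used (maximal rather than prime ideals) is exactly what is needed to conclude comaximality, which it is — two ideals are comaximal precisely when no maximal ideal contains both, i.e. when their supports (in this restrictive sense) are disjoint. No finite-generation hypothesis on $A$ is needed here.
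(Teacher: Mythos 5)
Your proof is correct and follows essentially the same route as the paper: establish comaximality $I+J=A$ from disjoint supports, then use the identity $x = xi + xj$ for $x \in I \cap J$ with $i+j=1$. The only cosmetic difference is that the paper cites $\Supp(I) \cap \Supp(J) = \Supp(I+J)$ where you argue by contradiction via a maximal ideal containing $I+J$; these are the same observation.
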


\begin{proof}
  Suppose $I$ and $J$ are ideals of an algebra $A$ and $I$ and $J$ have disjoint supports.  Then $I + J = A$ since $(\Supp I) \cap (\Supp J) = \Supp (I+J)$.  Thus we can write $1=i+j$ with $i \in I$ and $j \in J$.  Let $x \in I \cap J$.  Then $x = xi + xj$, where $xi,xj \in IJ$.  Therefore $I \cap J \subseteq IJ$.  The reverse inclusion is obvious.
\end{proof}

\begin{lem}[{\cite[Prop.~7.14]{AM69}}] \label{lem:ideal-contains-power-of-radical}
  In a Noetherian ring, every ideal contains a power of its radical.
\end{lem}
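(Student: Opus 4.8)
The plan is to reduce the statement to the classical fact that the nilradical of a Noetherian ring is nilpotent. Let $I$ be an ideal of a Noetherian ring $A$ and let $\rad(I)$ denote its radical. Pass to the quotient $\bar A := A/I$, which is again Noetherian, and observe that the image of $\rad(I)$ in $\bar A$ is precisely the nilradical $\mathfrak{N}$ of $\bar A$. Thus it suffices to find $n \in \N_+$ with $\mathfrak{N}^n = 0$: lifting such a relation back to $A$ yields exactly $\rad(I)^n \subseteq I$.

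It then remains to prove that $\mathfrak{N}$ is nilpotent. Since $\bar A$ is Noetherian, $\mathfrak{N}$ is finitely generated, say $\mathfrak{N} = (x_1, \dots, x_k)$. Each $x_i$ is nilpotent, so choose $n_i \in \N_+$ with $x_i^{n_i} = 0$ and set $n = n_1 + \cdots + n_k$. The ideal $\mathfrak{N}^n$ is generated by the monomials $x_1^{a_1} \cdots x_k^{a_k}$ with $a_1 + \cdots + a_k = n$. For any such exponent tuple it is impossible that $a_i \le n_i - 1$ for every $i$, since that would give $a_1 + \cdots + a_k \le (n_1 - 1) + \cdots + (n_k - 1) = n - k < n$; hence $a_i \ge n_i$ for some $i$, so $x_i^{a_i} = 0$ and the monomial vanishes. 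Therefore $\mathfrak{N}^n = 0$, which completes the argument.

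There is no real obstacle here; the only point worth flagging is that the Noetherian hypothesis enters exactly once, to guarantee that $\rad(I)$ (equivalently $\mathfrak{N}$) is finitely generated, which is what makes the pigeonhole bound on the exponents available. If one prefers, the same computation can be carried out directly in $A$ without passing to a quotient: write $\rad(I) = (y_1, \dots, y_k)$ with $y_i^{m_i} \in I$, set $m = m_1 + \cdots + m_k$, and note that every degree-$m$ monomial in the $y_i$ contains a factor $y_i^{m_i}$ and hence lies in $I$, so $\rad(I)^m \subseteq I$.
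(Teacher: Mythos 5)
Your proof is correct and is the standard argument for this classical fact. The paper gives no proof of this lemma and simply cites \cite[Prop.~7.14]{AM69}, so there is no in-paper argument to compare against; the reduction to nilpotence of the nilradical plus the pigeonhole on exponents is exactly how most references handle it, and the direct version you give at the end (choose generators $y_1,\dots,y_k$ of $\rad I$ with $y_i^{m_i}\in I$, then observe every monomial of total degree $m_1+\cdots+m_k$ has some $a_i\ge m_i$) is the cleanest route for the precise statement being proved. The only cosmetic point is that the step $n-k<n$ implicitly assumes $k\ge 1$; this is harmless, since if $k=0$ then $\rad I$ is the zero ideal and the claim holds trivially with $n=1$.
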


\subsection{Lie superalgebras} \label{subsec:Lie-superalg}

Let $\g = \g_{\bar 0} \oplus \g_{\bar 1}$ be a finite dimensional simple Lie superalgebra over $\kk$.  Then $\g$ is called \emph{classical} if the representation of $\g_{\bar 0}$ on $\g_{\bar 1}$ is completely reducible.  A simple Lie superalgebra is classical if and only if its even part $\g_{\bar 0}$ is a reductive Lie algebra.  If $\g$ is a classical Lie superalgebra, then the representation of $\g_{\bar 0}$ on $\g_{\bar 1}$ is either
\begin{enumerate}
  \item irreducible, in which case we say $\g$ is of \emph{type II}, or
  \item the direct sum of two irreducible representations, in which case we say $\g$ is of \emph{type I}.
\end{enumerate}
A classical Lie superalgebra $\g$ is called \emph{basic} if there exists a non-degenerate invariant bilinear form on $\g$.  (The classical Lie superalgebras that are not basic are called \emph{strange}.)

In Table~\ref{table}, we list all of the basic classical Lie superalgebras (up to isomorphism) that are not Lie algebras, together with their even part and their type.  We also include the Lie superalgebra $\fsl(n,n)$, $n \ge 2$, which is a 1-dimensional central extension of $A(n-1,n-1)$.

\begin{table}[!h]
  \begin{tabular}{ccc}
    \toprule
    $\g$ & $\g_{\bar 0}$ & Type \\
    \midrule
    $A(m,n)$, $m > n \ge 0$ & $A_m \oplus A_n \oplus \kk$ & I \\
    $A(n,n)$, $n \ge 1$ & $A_n \oplus A_n$ & I \\
    $\fsl(n,n)$, $n \ge 2$ & $A_{n-1} \oplus A_{n-1} \oplus \kk$ & N/A \\
    $C(n+1)$, $n \ge 1$ & $C_n \oplus \kk$ & I \\
    $B(m,n)$, $m \ge 0$, $n \ge 1$ & $B_m \oplus C_n$ & II \\
    $D(m,n)$, $m \ge 2$, $n \ge 1$ & $D_m \oplus C_n$ & II \\
    $F(4)$ & $A_1 \oplus B_3$ & II \\
    $G(3)$ & $A_1 \oplus G_2$ & II \\
    $D(2,1;\alpha)$, $\alpha \ne 0,-1$ & $A_1 \oplus A_1 \oplus A_1$ & II \\
    \bottomrule
  \end{tabular}
  \bigskip
  \caption{The basic classical Lie superalgebras that are not Lie algebras, together with their even part and their type} \label{table}
\end{table}

Note that in all cases in Table~\ref{table}, $\g_{\bar 0}$ is either semisimple or reductive with one-dimensional center.  Also note that all the Lie superalgebras in Table~\ref{table}, including $\mathfrak{sl}(n,n)$, are perfect (i.e.\ satisfy $[\g,\g]=\g$).

For any basic classical Lie superalgebra $\g$, there exists a \emph{distinguished $\Z$-grading} $\g = \bigoplus_{i \in \Z} \g_i$ of $\g$ that is compatible with the $\Z_2$-grading (i.e.\ each graded piece is a $\Z_2$-graded subspace of $\g$) and such that
\begin{enumerate}
  \item \label{item:typeI-Zgrading} if $\g$ is of type I, then $\g_i=0$ for $|i|>1$, $\g_{\bar 0} = \g_0$, $\g_{\bar 1} = \g_{-1} \oplus \g_1$, and
  \item if $\g$ is of type II, then $\g_i=0$ for $|i|>2$, $\g_{\bar 0} = \g_{-2} \oplus \g_0 \oplus \g_2$, $\g_{\bar 1} = \g_{-1} \oplus \g_1$.
\end{enumerate}
If $\g = \mathfrak{sl}(n,n)$, then we have a distinguished $\Z$-grading as in \eqref{item:typeI-Zgrading} above.  We simply let $\g_0$ be the preimage of the zero graded piece of the type I Lie superalgebra $A(n,n)$ under the canonical projection from $\g$ to $A(n,n)$.

By definition, a Cartan subalgebra of $\g$ is just a Cartan subalgebra of the even part $\g_{\bar 0}$.  Fix such a Cartan subalgebra $\h$.  If $\g$ is a basic classical Lie superalgebra or $\mathfrak{sl}(n,n)$, then we can choose a Borel subalgebra $\fb^+$ containing $\h$ and $\bigoplus_{i > 0} \g_i$.  (Here we refer to the distinguished $\Z$-grading on $\g$ mentioned above.)  Since the adjoint action of $\h$ on $\g$ is diagonalizable, we obtain a decomposition
\[
  \g = \n^- \oplus \h \oplus \n^+,
\]
where $\n^{\pm}$ are subalgebras, $[\h,\n^{\pm}] \subseteq \n^{\pm}$, and $\fb^+ = \h \oplus \n^+$.  A root $\alpha$ is called \emph{positive} (resp.\ \emph{negative}) if $\g_\alpha \cap \n^+ \ne 0$ (resp.\ $\g_\alpha \cap \n^- \ne 0$).  A root $\alpha$ is called \emph{even} (resp.\ \emph{odd}) if $\g_\alpha \cap \g_{\bar 0} \ne 0$ (resp.\ $\g_\alpha \cap \g_{\bar 1} \ne 0$).  Let $\Delta_{\bar 0}$ and $\Delta_{\bar 1}$ denote the set of even and odd roots respectively.  Then $\Delta = \Delta_{\bar 0} \cup \Delta_{\bar 1}$ is the set of all roots.  We let $\Delta^\pm_{\bar 0}$, $\Delta_{\bar 1}^\pm$, $\Delta^\pm$ denote the corresponding subsets of positive/negative roots.  For $\g$ a basic classical Lie superalgebra, the zero weight space of $\g$ is equal to $\h$ (see \cite[Prop.~5.3]{Kac77b}).   It follows that this property also holds for $\mathfrak{sl}(n,n)$.  Let $P$ and $Q^+$ be the weight lattice and positive root lattice of $\g$ respectively.

\begin{lem}[{\cite[Prop.~5.2.4]{Kac77}}] \label{lem:fd-reps-solvable}
  All the irreducible finite dimensional representations of a solvable Lie superalgebra $L = L_{\bar 0} \oplus L_{\bar 1}$ are one-dimensional if and only if $[L_{\bar 1}, L_{\bar 1}] \subseteq [L_{\bar 0},L_{\bar 0}]$.
\end{lem}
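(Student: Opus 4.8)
The plan is to reduce the statement to a single clean criterion and then treat the two implications in parallel. First I would record two elementary facts: (i) in characteristic $\ne 2$ the subspace $[L_{\bar 1},L_{\bar 1}]$ is spanned by the squares $[x,x]$, $x\in L_{\bar 1}$ (polarization); and (ii) every one-dimensional $L$-module is annihilated by $[L,L]$ -- indeed on a one-dimensional module $L_{\bar 1}$ (and hence $[L_{\bar 0},L_{\bar 1}]$) acts by zero, the operators $xy+yx$ with $x,y$ odd vanish, and $L_{\bar 0}$ acts by a character, so $[L_{\bar 0},L_{\bar 0}]$ acts by zero. From (ii), passing to a composition series, one sees that \emph{all} irreducible finite dimensional $L$-modules are one-dimensional if and only if the ideal $[L,L]$ acts nilpotently on every finite dimensional $L$-module: if all composition factors are one-dimensional then the ideal shifts the filtration, and conversely an ideal acting nilpotently is killed on every irreducible subquotient.

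For the forward direction I would argue contrapositively. If $[L_{\bar 1},L_{\bar 1}]\not\subseteq[L_{\bar 0},L_{\bar 0}]$, choose by (i) an element $x\in L_{\bar 1}$ with $z:=[x,x]\notin[L_{\bar 0},L_{\bar 0}]$, and then a character $\psi$ of the Lie algebra $L_{\bar 0}$ (a linear functional vanishing on $[L_{\bar 0},L_{\bar 0}]$) with $\psi(z)\ne 0$; such $\psi$ exists because $z$ has nonzero image in $L_{\bar 0}/[L_{\bar 0},L_{\bar 0}]$. Form the induced module $M=U(L)\otimes_{U(L_{\bar 0})}\kk_\psi$. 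By the PBW theorem for Lie superalgebras $M\cong\Lambda(L_{\bar 1})$ as a vector space, so $M$ is finite dimensional, and on its cyclic generator the even element $z$ acts by the nonzero scalar $\psi(z)$, so $z$ acts non-nilpotently on $M$. By the criterion above, the composition factors of $M$ cannot all be one-dimensional, so $L$ has an irreducible finite dimensional module of dimension $>1$.

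For the reverse direction, assume $[L_{\bar 1},L_{\bar 1}]\subseteq[L_{\bar 0},L_{\bar 0}]$ and let $V$ be an irreducible finite dimensional $L$-module. Since $L_{\bar 0}$ is solvable, Lie's theorem triangularizes $V$ as an $L_{\bar 0}$-module, so $[L_{\bar 0},L_{\bar 0}]$ -- hence, by hypothesis, $[L_{\bar 1},L_{\bar 1}]$ -- acts by nilpotent operators; since $x^{2}=\tfrac12[x,x]$ for odd $x$, every $x\in L_{\bar 1}$ then acts nilpotently as well. Thus every homogeneous element of the ideal $[L,L]=[L_{\bar 0},L_{\bar 0}]\oplus[L_{\bar 0},L_{\bar 1}]$ acts nilpotently on $V$, so by Engel's theorem for Lie superalgebras $[L,L]$ acts nilpotently on $V$; then $[L,L]V$ is a proper submodule, hence $0$ by irreducibility, and $V$ is an irreducible module over the abelian superalgebra $L/[L,L]$. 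Finally such a module is one-dimensional: its odd part acts by anticommuting square-zero operators, which forces the odd part to act by zero, after which $V$ is irreducible over an abelian Lie algebra over an algebraically closed field.

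The routine ingredients are the polarization identity, the PBW computation showing that $M$ is finite dimensional, the ``square root'' observation, and the final abelian-quotient step. The two points that do the real work -- and the ones I would be most careful about -- are choosing the module $M$ in the forward direction so that it simultaneously stays finite dimensional and detects the non-nilpotency of $z$, and having Engel's theorem for Lie superalgebras (a Lie superalgebra of nilpotent operators on a finite dimensional space annihilates a nonzero vector) available in the reverse direction; I would regard the latter as the one genuine external input.
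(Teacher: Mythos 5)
The paper does not prove this lemma: it is stated with the citation \cite[Prop.~5.2.4]{Kac77} and used as a black box (its only appearance is in the proof of Theorem~\ref{thm:untwisted-classification-g0-semisimple}), so there is no in-paper argument to compare against. Judged on its own terms, your proof is correct and is a clean way to organize the argument. The reduction to the criterion ``all irreducible finite dimensional $L$-modules are one-dimensional $\iff$ $[L,L]$ acts nilpotently on every finite dimensional $L$-module,'' together with observation (ii), is a nice device: it turns the forward implication into a pure existence statement, which you settle by producing the Kac-type induced module $M = U(L)\otimes_{U(L_{\bar 0})}\kk_\psi$ and noting that $z=[x,x]$ acts on its cyclic generator by the nonzero scalar $\psi(z)$. (Your criterion as stated is only proved in the direction you actually use; the converse direction of that iff is subsumed by your separate argument for the reverse implication, so nothing is lost.) The only genuinely external inputs are the super-PBW theorem (which the paper records as Lemma~\ref{lem:PBW}, and which you use to see that $M\cong\Lambda(L_{\bar 1})$ is finite dimensional) and Engel's theorem for Lie superalgebras, which you correctly flag; both are standard and are preliminaries to the very result in \cite{Kac77} being cited. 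One phrasing I would tighten: ``its odd part acts by anticommuting square-zero operators, which forces the odd part to act by zero'' is not a general fact about such operators but relies on the irreducibility of $V$ -- the operators from $(L/[L,L])_{\bar 1}$ generate a nilpotent two-sided ideal $J$ of the image of $U(L/[L,L])$ in $\End V$, and $JV$ is then a proper submodule, hence zero. You clearly have this in mind (you invoke irreducibility in the very next clause), but spelling it out would remove the only point where the write-up is at all elliptical.
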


\begin{lem} \label{lem:annilating-ideal}
  Suppose $L$ is a Lie superalgebra and $V$ is an irreducible $L$-module such that $\mathfrak{I} v=0$ for some ideal $\mathfrak{I}$ of $L$ and nonzero vector $v \in V$.  Then $\mathfrak{I} V=0$.
\end{lem}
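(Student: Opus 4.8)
The plan is to realize the set of vectors killed by $\mathfrak{I}$ as a submodule and then invoke irreducibility. Set
\[
  W := \{w \in V \mid \mathfrak{I}w = 0\}.
\]
This is a $\kk$-subspace of $V$ containing the nonzero vector $v$, so $W \ne 0$. If I can show that $W$ is an $L$-submodule, then irreducibility of $V$ forces $W = V$, which is exactly the claim $\mathfrak{I}V = 0$.

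To check $L$-stability of $W$, I would first reduce to $\Z_2$-homogeneous elements: since $L$ and the (graded) ideal $\mathfrak{I}$ decompose into even and odd parts, it suffices to verify that $y(xw) = 0$ for homogeneous $x \in L$, homogeneous $y \in \mathfrak{I}$, and $w \in W$. For this I would apply the defining identity for a module over a Lie superalgebra in the form
\[
  y(xw) = (-1)^{|x||y|}\, x(yw) + [y,x]w .
\]
Here $x(yw) = 0$ because $yw = 0$ (as $w \in W$), and $[y,x]w = 0$ because $[y,x] \in \mathfrak{I}$ (as $\mathfrak{I}$ is an ideal) and again $w \in W$. Hence $y(xw) = 0$, i.e.\ $xw \in W$.

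Thus $W$ is a nonzero submodule of the irreducible module $V$, so $W = V$ and therefore $\mathfrak{I}V = 0$. I do not expect any genuine obstacle here; the only points requiring a moment's care are the Koszul sign bookkeeping and the preliminary reduction to homogeneous elements, which is legitimate precisely because $L$, $\mathfrak{I}$, and $V$ are all $\Z_2$-graded.
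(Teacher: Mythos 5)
Your proof is correct and follows essentially the same approach as the paper: define $W = \{w \in V \mid \mathfrak{I}w = 0\}$, observe it is a nonzero submodule because $\mathfrak{I}$ is an ideal, and conclude by irreducibility. You simply spell out the Koszul-sign computation that the paper dismisses as "easy to see."
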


\begin{proof}
  Let
  \[
    W = \{w \in V\ |\ \mathfrak{I} w = 0\}.
  \]
  By assumption $v \in W$ and so $W$ is nonzero.  Furthermore, since $\mathfrak{I}$ is an ideal of $L$, it is easy to see that $W$ is a submodule of $V$.  Since $V$ is irreducible, this implies that $W=V$, completing the proof of the lemma.
\end{proof}

In the Lie superalgebra case, the Poincar\'e-Birkhoff-Witt Theorem (or PBW Theorem) has the following form (see, for example, \cite[Thm.~7.2.1]{Var04}).

\begin{lem} \label{lem:PBW}
  Let $B_0, B_1$ be totally ordered bases for $\g_{\bar 0}, \g_{\bar 1}$ (over $\kk$) respectively.  Then the standard monomials
  \[
    u_1 \cdots u_r v_1 \cdots v_s, \quad u_1,\dots,u_r \in B_0,\ v_1, \dots, v_s \in B_1,\quad u_1 \le \cdots \le u_r,\ v_1 < \cdots < v_s,
  \]
   form a basis (over $\kk$) for the universal enveloping superalgebra $U(\g)$. In particular, if $\g_{\bar 0} = 0$ (i.e.\ $\g$ is odd), then $U(\g)$ is finite dimensional.
\end{lem}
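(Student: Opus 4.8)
The plan is to follow the standard strategy for proving the PBW theorem, adapted to the $\Z_2$-graded setting, and then to extract the final assertion about odd Lie superalgebras as an immediate corollary. First I would establish that the proposed standard monomials span $U(\g)$. This is a filtration argument: let $T(\g)$ be the tensor superalgebra and recall $U(\g) = T(\g)/J$, where $J$ is the two-sided ideal generated by the elements $x \otimes y - (-1)^{|x||y|} y \otimes x - [x,y]$ for homogeneous $x,y \in \g$. Using the defining relations, any monomial in the images of basis elements of $B_0 \cup B_1$ can be rewritten, via repeated transpositions, as a $\kk$-linear combination of monomials that are nondecreasing in the $B_0$-part and strictly increasing in the $B_1$-part: each swap either puts a pair in order at the cost of a lower-degree correction term (the bracket), or, when two equal odd basis elements $v$ are adjacent, uses $v \otimes v = \tfrac12[v,v]$ to replace $v^2$ by an element of $U(\g_{\bar 0}) \cdot (\text{lower degree})$. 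An induction on the total degree, with a secondary induction on the number of ``inversions,'' shows the standard monomials span.

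The harder half is linear independence. Here I would construct a representation of $\g$ on the free $\kk$-module $M$ with basis the set of abstract standard monomials, defined so that each homogeneous $x \in \g$ acts in the ``expected'' way — prepending $x$ and then reordering according to the same rules used above — and then verify that this action satisfies the superbracket relation $x \cdot (y \cdot m) - (-1)^{|x||y|} y \cdot (x \cdot m) = [x,y] \cdot m$. This is the technical heart of the argument and, as in the classical case, it is the main obstacle: one must check compatibility on standard monomials by a careful case analysis (whether $x,y$ are $\le$ or $>$ the leading factor of $m$, whether they are even or odd, and the diagonal case $x=y$ odd), using the super Jacobi identity at the crucial step. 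Once this action is shown to be well defined, it factors through $U(\g)$, and applying it to the empty monomial sends distinct standard monomials in $U(\g)$ to distinct basis elements of $M$; hence they are linearly independent. Combined with the spanning statement, this proves the standard monomials form a $\kk$-basis of $U(\g)$.

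Rather than reproving this from scratch, however, I would simply cite the reference already quoted in the excerpt, namely \cite[Thm.~7.2.1]{Var04}, for the basis statement, and devote the write-up only to the final sentence. If $\g_{\bar 0} = 0$, then $B_0 = \emptyset$, so every standard monomial is of the form $v_1 \cdots v_s$ with $v_1 < \cdots < v_s$ drawn from the finite totally ordered basis $B_1$ of the finite dimensional space $\g_{\bar 1} = \g$. There are only finitely many strictly increasing sequences from a finite set (indeed $2^{\dim \g}$ of them, indexed by subsets of $B_1$), so $U(\g)$ is finite dimensional. This completes the proof.
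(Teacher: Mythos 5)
Your approach matches the paper's: Lemma~\ref{lem:PBW} is stated there without proof, with exactly the citation to \cite[Thm.~7.2.1]{Var04} that you ultimately rely on. Your explicit count of $2^{\dim\g}$ standard monomials (strictly increasing sequences from the finite basis $B_1$) correctly and cleanly justifies the final ``In particular'' sentence, which the paper leaves implicit.
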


%
\section{Equivariant map superalgebras} \label{sec:EMSAs}
%

In this section we introduce our main object of study: the equivariant map superalgebras.  Recall that $\g = \g_{\bar 0} \oplus \g_{\bar 1}$ is a Lie superalgebra and $X$ is a scheme with coordinate ring $A$.

\begin{defin}[Map superalgebra] \label{def:map-superalgebra}
  We call the Lie superalgebra $\g \otimes A$ of regular functions on $X$ with values in $\g$ a \emph{map (Lie) superalgebra}.  The $\Z_2$-grading on $\g \otimes A$ is given by $(\g \otimes A)_\epsilon = \g_\epsilon \otimes A$ for $\epsilon = \bar 0, \bar 1$, and the multiplication on $\g \otimes A$ is pointwise.  That is, multiplication is given by extending the bracket
  \[
    [u_1 \otimes f_1, u_2 \otimes f_2] = [u_1,u_2] \otimes f_1f_2,\quad u_1,u_2 \in \g,\ f_1,f_2 \in A,
  \]
  by linearity.  It is easily verified that $\g \otimes A$ satisfies the axioms of a Lie superalgebra.
\end{defin}

An action of a group $\Gamma$ on a Lie superalgebra $\g$ and on a scheme $X$ will always be assumed to be by Lie superalgebra automorphisms of $\g$ and scheme automorphisms of $X$.  Recall that Lie superalgebra automorphisms respect the $\Z_2$-grading.  A $\Gamma$-action on $X$ induces a $\Gamma$-action on $A$.

\begin{defin}[Equivariant map superalgebra]
  Let $\Gamma$ be a group acting on a scheme $X$ (hence on $A$) and a Lie superalgebra $\g$ by automorphisms.  Then $\Gamma$ acts naturally on $\g \otimes A$ by extending the map $\gamma (u \otimes f) = (\gamma  u) \otimes (\gamma  f)$, $\gamma \in \Gamma$, $u \in \g$, $f \in A$, by linearity.  We define
  \[
    (\g \otimes A)^\Gamma = \{\mu \in \g \otimes A\ |\ \gamma \mu = \mu \ \forall\ \gamma \in \Gamma\}
  \]
  to be the subsuperalgebra of points fixed under this action.  In other words, $(\g \otimes A)^\Gamma$ is the subsuperalgebra of $\g \otimes A$ consisting of $\Gamma$-equivariant maps from $X$ to $\g$.  We call $(\g \otimes A)^\Gamma$ an \emph{equivariant map (Lie) superalgebra}.  Note that if $\Gamma$ is the trivial group, this definition reduces to Definition~\ref{def:map-superalgebra}.
\end{defin}

\begin{example}[Multiloop superalgebras] \label{eg:multiloop}
  Fix positive integers $n, m_1, \dots, m_n$.  Let
  \[
    \Gamma = \langle \gamma_1,\dots, \gamma_n\ |\ \gamma_i^{m_i}=1,\ \gamma_i \gamma_j = \gamma_j \gamma_i,\ \forall\ 1 \le i,j \le n \rangle \cong \Z/m_1 \Z \times \dots \times \Z/m_n \Z,
  \]
  and suppose that $\Gamma$ acts on $\g$. Note that this is equivalent to specifying commuting automorphisms $\sigma_i$, $i=1,\dots,n$, of $\g$ such that $\sigma_i^{m_i}=\id$. For $i = 1,\dots, n$, let $\xi_i$ be a primitive $m_i$-th root of unity. Let $X = \Spec A$, where $A = \kk[t_1^{\pm 1}, \dots, t_n^{\pm 1}]$ is the $\kk$-algebra of Laurent polynomials in $n$ variables (in other words, $X$ is the $n$-dimensional torus $(\kk^\times)^n$), and define an action of $\Gamma$ on $X$ by
  \[
    \gamma_i (z_1, \dots, z_n) = (z_1, \dots, z_{i-1}, \xi_i z_i, z_{i+1}, \dots, z_n).
  \]
  Then
  \begin{equation} \label{eq:twisted-multiloop-def}
      M(\g,\sigma_1,\dots,\sigma_n,m_1,\dots,m_n) := (\g \otimes A)^\Gamma
  \end{equation}
  is the \emph{(twisted) multiloop superalgebra} of $\g$ relative to $(\sigma_1, \dots, \sigma_n)$ and $(m_1, \ldots, m_n)$.  In the case that $\Gamma$ is trivial (i.e.\ $m_i=1$ for all $i=1,\dots,n$), we call often call it an \emph{untwisted multiloop superalgebra}.  If $n=1$, $M(\g,\sigma_1,m_1)$ is simply called a (\emph{twisted} or \emph{untwisted}) \emph{loop superalgebra}.
\end{example}

\begin{rem} \label{rem:reduction-to-affine}
  If $\Gamma$ acts on $X$, hence on $A$, then $\Gamma$ acts on $V := \Spec A$ by \cite[I-40]{EH00}.  Since the coordinate rings of $X$ and $V$ are both $A$, the equivariant map superalgebras corresponding to $X$ and $V$ are the same. Therefore, we lose no generality in assuming that $X$ is an affine scheme and we will often do so in the sequel.
\end{rem}

\begin{defin}[$\Ann_A V$, $\Ann_A \rho$] \label{def:annihilator}
  For a $(\g \otimes A)^\Gamma$-module $V$, we define $\Ann_A V$ to be the largest $\Gamma$-invariant ideal $I$ of $A$ satisfying $(\g \otimes I)^\Gamma V=0$. In other words, $\Ann_A V$ is the sum of all $\Gamma$-invariant ideals $I$ such that $(\g \otimes I)^\Gamma V =0$.  If $\rho$ is the representation corresponding to $V$, we set $\Ann_A \rho := \Ann_A V$.
\end{defin}

\begin{lem} \label{lem:g-perfect-annihilator}
  If $\g$ is a perfect Lie superalgebra (i.e.\ $[\g,\g] = \g$) and $V$ is a $(\g \otimes A$)-module, then
  \[
    \Ann_A V = \{f \in A\ |\ (\g \otimes f)V=0\}.
  \]
\end{lem}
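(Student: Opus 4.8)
The plan is to prove both inclusions for the set $S := \{f \in A \mid (\g \otimes f)V = 0\}$. The inclusion $\Ann_A V \subseteq S$ is immediate: if $I$ is any ideal of $A$ with $(\g \otimes I)V = 0$, then $(\g \otimes f)V = 0$ for every $f \in I$, so $I \subseteq S$; since (with $\Gamma$ trivial) $\Ann_A V$ is by definition the sum of all such ideals, $\Ann_A V \subseteq S$.

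For the reverse inclusion $S \subseteq \Ann_A V$, it suffices to show that $S$ is an ideal of $A$, because then $(\g \otimes S)V = 0$ holds by the very definition of $S$, so $S$ is one of the ideals whose sum is $\Ann_A V$. That $S$ is a $\kk$-subspace is clear from $(\g \otimes(f+g))V \subseteq (\g \otimes f)V + (\g \otimes g)V$ and closure under scalars. The essential point --- and the only place the hypothesis $\g=[\g,\g]$ enters --- is closure under multiplication by an arbitrary $a \in A$. Given $f \in S$ and homogeneous $u,v \in \g$, I would compute, for every $w \in V$,
\[
  \big([u,v] \otimes af\big)w = (u \otimes a)\big((v \otimes f)w\big) - (-1)^{|u||v|}(v \otimes f)\big((u \otimes a)w\big) = 0,
\]
using that $v \otimes f$ annihilates all of $V$ since $f \in S$. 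As $\g = [\g,\g]$ is spanned by brackets $[u,v]$ of homogeneous elements, this yields $(\g \otimes af)V = 0$, i.e.\ $af \in S$; hence $S$ is an ideal and the two inclusions give $\Ann_A V = S$.

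The argument is genuinely short, and there is no real obstacle: the one thing to notice is that the pointwise-looking set $S$ need not be an ideal for a general target (e.g.\ if $\g$ were abelian), so perfectness is exactly what is needed to upgrade the condition $(\g \otimes f)V = 0$ to an ideal-theoretic statement. The only mild care needed is to apply the super bracket formula $[x,y]w = x(yw) - (-1)^{|x||y|}y(xw)$ to homogeneous elements, though its precise sign is irrelevant here since one of the two factors acts as zero on $V$.
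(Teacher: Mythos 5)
Your proof is correct and takes essentially the same route as the paper: both reduce the statement to showing that $S = \{f \in A \mid (\g \otimes f)V = 0\}$ is an ideal, and both use perfectness via the identity $(\g \otimes fg)V = [\g \otimes f, \g \otimes g]V = 0$. You simply unpack the super-bracket at the level of homogeneous elements, whereas the paper states the same computation more compactly.
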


\begin{proof}
  Let $I = \{f \in A\ |\ (\g \otimes f)V=0\}$.  It suffices to show that $I$ is an ideal of $A$.  It is easy to see that $I$ is a linear subspace of $A$.  Now let $f \in I$ and $g \in A$.  Then, since $\g$ is perfect,
  \[
    (\g \otimes fg)V = [\g \otimes f,\g \otimes g] V = 0,
  \]
  and so $fg \in I$. Thus $I$ is an ideal of $A$.
\end{proof}

\begin{defin}[Support] \label{def:support}
  Let $V$ be a $(\g \otimes A)^\Gamma$-module.  We define the \emph{support} of $V$ to be
  \[
    \Supp_A V := \Supp \Ann_A V.
  \]
  If $\rho$ is the representation corresponding to $V$, we also set $\Supp_A \rho := \Supp_A V$.  We say that $V$ has \emph{reduced support} if $\Ann_A V$ is a radical ideal (equivalently, $\Spec (A/\Ann_A V)$ is reduced).
\end{defin}

%
\section{Quasifinite and highest weight modules} \label{sec:quasifinite-hw}
%

In this section we introduce certain classes of modules that will play an important role in our exposition.  We assume that $\g = \g_{\bar 0} \oplus \g_{\bar 1}$ is either a reductive Lie algebra (considered as a Lie superalgebra with zero odd part), a basic classical Lie superalgebra, or $\mathfrak{sl}(n,n)$, $n \ge 1$.  Thus, in particular, $\g_{\bar 0}$ is a reductive Lie algebra.  If $\g$ is a basic Lie superalgebra or $\mathfrak{sl}(n,n)$, $n \ge 1$, we choose a triangular decomposition of $\g$ as in Section~\ref{subsec:Lie-superalg}.  If $\g$ is a reductive Lie algebra, we choose any triangular decomposition.  We set $\fb^\pm = \h \oplus \n^\pm$.  We also identify $\g$ with the subsuperalgebra $\g \otimes \kk \subseteq \g \otimes A$.

\begin{defin}[Weight module]
  A $(\g \otimes A)$-module $V$ is called a \emph{weight module} if its restriction to $\g$ is a weight module, that is, if
  \[ \ts
    V = \bigoplus_{\lambda \in \h^*} V_\lambda,\quad V_\lambda:= \{v \in V\ |\ h v = \lambda(h)v \ \forall\ h \in \h\}.
  \]
  The $\lambda \in \h^*$ such that $V_\lambda \ne 0$ are called \emph{weights} of $V$.  A nonzero element of $V_\lambda$ for some $\lambda \in \h^*$ is called a \emph{weight vector} of weight $\lambda$.
\end{defin}

\begin{defin}[Quasifinite module]
  A $(\g \otimes A)$-module is called \emph{quasifinite} if it is a weight module and all weight spaces are finite dimensional.
\end{defin}

\begin{defin}[Highest weight module]
  A $(\g \otimes A)$-module $V$ is called a \emph{highest weight module} if there exists a nonzero vector $v \in V$ such that $(\n^+ \otimes A)v = 0$, $U(\h \otimes A)v = \kk v$, and $U(\g \otimes A)v = V$.  Such a vector $v$ is called a \emph{highest weight vector}.
\end{defin}

\begin{rem}
  Note that every highest weight module is a weight module.  This follows from the fact that the highest weight vector is a weight vector by definition and generates the entire module.
\end{rem}

We fix the usual partial order on $P$ given by
\[
  \lambda \ge \mu \iff \lambda - \mu \in Q^+.
\]

\begin{lem} \label{lem:fd-implies-hw}
  Every irreducible finite dimensional $(\g \otimes A)$-module is a highest weight module.
\end{lem}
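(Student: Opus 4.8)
The plan is to show that any irreducible finite dimensional $(\g \otimes A)$-module $V$ contains a nonzero vector killed by $\n^+ \otimes A$ and on which $\h \otimes A$ acts by scalars, since such a vector automatically generates $V$ by irreducibility and is therefore a highest weight vector. The strategy is a two-stage weight argument: first use finite dimensionality to find a weight of $V$ that is maximal with respect to the partial order coming from $Q^+$, then within that maximal weight space cut down to a common eigenvector for the abelian Lie algebra $\h \otimes A$.

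First I would observe that $V$ is a weight module for $\g$. This follows because $\h$ is an abelian Lie algebra acting on the finite dimensional space $V$, and the action is simultaneously diagonalizable: indeed, $V$ restricts to a finite dimensional $\g_{\bar 0}$-module, and $\g_{\bar 0}$ is a reductive Lie algebra, so $\h$ acts semisimply on $V$ by standard finite dimensional representation theory of reductive Lie algebras. Hence $V = \bigoplus_{\lambda \in \h^*} V_\lambda$ with finitely many nonzero weight spaces. Next, since the set of weights is finite and nonempty, I can choose a weight $\lambda$ that is maximal in the partial order, i.e.\ such that $\lambda + \alpha$ is not a weight for any $\alpha \in Q^+ \setminus \{0\}$; concretely one picks $\lambda$ maximizing the value of a fixed linear functional that is positive on $Q^+ \setminus \{0\}$ (such as the sum of the fundamental coweights evaluated on the root lattice). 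For this $\lambda$, the subspace $V_\lambda$ is annihilated by $\n^+ \otimes A$: any root vector $x_\alpha \otimes f$ with $\alpha \in \Delta^+$ sends $V_\lambda$ into $V_{\lambda + \alpha} = 0$, and $\n^+ \otimes A$ is spanned by such elements.

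Then I would restrict attention to $V_\lambda$ and the action of $\h \otimes A$. Since $\n^+ \otimes A$ kills $V_\lambda$ and $[\h \otimes A, \n^+ \otimes A] \subseteq \n^+ \otimes A$, the subspace $(\h \otimes A) V_\lambda$ is still contained in $V_\lambda$, so $V_\lambda$ is a module for the Lie algebra $\h \otimes A$. But $\h \otimes A$ is abelian (as $\h$ is abelian and $A$ is commutative), and $V_\lambda$ is a nonzero finite dimensional space, so $\h \otimes A$ has a common eigenvector $v \in V_\lambda$: the commuting operators given by the action of a basis of $\h \otimes A$ have a simultaneous eigenvector over the algebraically closed field $\kk$. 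This $v$ satisfies $(\n^+ \otimes A) v = 0$ and $U(\h \otimes A) v = \kk v$, and by irreducibility $U(\g \otimes A) v = V$, so $v$ is a highest weight vector and $V$ is a highest weight module.

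The only mild subtlety — the main thing to get right rather than a real obstacle — is the first step: knowing that $V$ is a weight module, i.e.\ that $\h$ acts diagonalizably on $V$. This is where one uses that $\g_{\bar 0}$ is reductive (guaranteed by the standing hypothesis that $\g$ is a reductive Lie algebra, basic classical, or $\fsl(n,n)$) together with Weyl's theorem on complete reducibility for finite dimensional modules over reductive Lie algebras in characteristic zero; in the purely super setting one must be slightly careful not to assume complete reducibility for $\g$ itself, but only the diagonalizability of the Cartan, which comes entirely from the even part. Everything after that is routine linear algebra over the algebraically closed field $\kk$.
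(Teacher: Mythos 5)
Your overall strategy (maximal weight, then common eigenvector for the abelian $\h \otimes A$ inside the top weight space) is sound, and in fact your final step is spelled out more carefully than the paper's, which merely says ``let $v$ be a nonzero vector in $V_\lambda$ \dots\ $v$ is a highest weight vector'' without explicitly reducing to a common $\h\otimes A$-eigenvector. The gap is in your first step. You assert that $\h$ acts semisimply on $V$ ``by standard finite dimensional representation theory of reductive Lie algebras'' and cite ``Weyl's theorem on complete reducibility for finite dimensional modules over reductive Lie algebras.'' There is no such theorem: Weyl's theorem is for \emph{semisimple} Lie algebras, and for a reductive Lie algebra with nonzero center a finite dimensional module is completely reducible precisely when the center acts semisimply, which is not automatic. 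Since the standing hypotheses include the cases where $\g_{\bar 0}$ has a one-dimensional center (e.g.\ $\g = A(m,n)$ with $m>n$, $C(n+1)$, $\fsl(n,n)$, or $\g$ a non-semisimple reductive Lie algebra), the claim that diagonalizability ``comes entirely from the even part'' is unjustified: nothing in the $\g_{\bar 0}$-module structure alone forces the central element of $\h$ to act diagonalizably.

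The paper avoids this by obtaining the weight-module property from the irreducibility of $V$ over $\g \otimes A$, not from properties of the restriction to $\g_{\bar 0}$. It first notes that the abelian Lie algebra $\h \otimes A$ has only one-dimensional irreducible finite dimensional representations, so $V$ contains a common eigenvector $w$ for $\h \otimes A$; in particular $w$ is a weight vector for $\h$. Since $V$ is irreducible, $w$ generates $V$ under $U(\g \otimes A)$, and since $\ad(\h)$ acts diagonalizably on $\g \otimes A$ (by the root space decomposition of $\g$), $U(\g\otimes A)w$ decomposes into weight spaces, so $V$ is a weight module. From there one proceeds exactly as you did: pick a maximal weight $\lambda$ and a common $\h\otimes A$-eigenvector in $V_\lambda$. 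So your argument can be repaired simply by replacing the appeal to Weyl's theorem with this generation argument; the rest of your proof then goes through and matches the paper's.
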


\begin{proof}
  Let $V$ be an irreducible finite dimensional $(\g \otimes A)$-module.  Since all irreducible finite dimensional representations of the abelian Lie algebra $\h \otimes A$ are one-dimensional, there exists a nonzero vector $w \in V$ fixed by $\h \otimes A$.  Thus $w$ is a weight vector.  Since $V$ is irreducible as a $(\g \otimes A)$-module, it follows that $w$ generates $V$ and hence that $V$ is a weight module.  Now let $\lambda$ be a maximal weight of $V$ and let $v$ be a nonzero vector in $V_\lambda$.  Then it follows from the irreducibility of $V$ that $v$ is a highest weight vector.
\end{proof}

\begin{defin}[Irreducible highest weight modules $V(\psi)$]
  Fix $\psi \in (\h \otimes A)^*$.  We define an action of $\fb^+ \otimes A$ on $\kk$ (considered to be in degree zero) by declaring $\h \otimes A$ to act via $\psi$ and $\n^+ \otimes A$ to act by zero.  We denote the resulting module by $\kk_\psi$ and consider the induced module
  \[
    U(\g \otimes A) \otimes_{U(\fb^+ \otimes A)} \kk_\psi.
  \]
  It is clear that this is a highest weight module.  It follows that it possesses a unique maximal submodule $N(\psi)$ and we define
  \[
    V(\psi) = \left(U(\g \otimes A) \otimes_{U(\fb^+ \otimes A)} \kk_\psi\right)/N(\psi).
  \]
  Every irreducible highest weight $(\g \otimes A$)-module (hence, by Lemma~\ref{lem:fd-implies-hw}, every irreducible finite dimensional $(\g \otimes A$)-module) is isomorphic to $V(\psi)$ for some $\psi \in (\g \otimes A)^*$.
\end{defin}

\begin{lem} \label{lem:Schur}
  Let $V$ be a highest weight $(\g \otimes A)$-module.  Then $\End_{\g \otimes A} V = \kk$.
\end{lem}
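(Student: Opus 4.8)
The plan is to imitate the classical Schur-lemma argument, using the fact that a highest weight module has a one-dimensional top weight space together with the PBW theorem to control the grading. Let $v \in V_\lambda$ be a highest weight vector, so $(\n^+ \otimes A)v = 0$, $U(\h \otimes A)v = \kk v$, and $U(\g \otimes A)v = V$. The first step is to observe that, by the PBW theorem (Lemma~\ref{lem:PBW}) applied to the triangular decomposition $\g \otimes A = (\n^- \otimes A) \oplus (\h \otimes A) \oplus (\n^+ \otimes A)$, every weight $\mu$ of $V$ satisfies $\mu \le \lambda$, and the weight space $V_\lambda$ is spanned by $v$ alone (any PBW monomial involving a factor from $\n^- \otimes A$ strictly lowers the weight, while factors from $(\h \oplus \n^+) \otimes A$ act on $v$ by a scalar or by zero). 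Hence $\dim V_\lambda = 1$.

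Next, let $\phi \in \End_{\g \otimes A} V$. Since $\phi$ commutes with the action of $\h = \h \otimes \kk \subseteq \g \otimes A$, it preserves every weight space $V_\mu$; in particular it preserves $V_\lambda = \kk v$, so $\phi(v) = c v$ for some $c \in \kk$. Then $\phi - c\,\id$ is again an element of $\End_{\g \otimes A} V$ that kills $v$. Since $v$ generates $V$ as a $(\g \otimes A)$-module and $\phi - c\,\id$ is $(\g \otimes A)$-linear, we get $(\phi - c\,\id)(V) = (\phi - c\,\id)\bigl(U(\g \otimes A)v\bigr) = U(\g \otimes A)(\phi - c\,\id)(v) = 0$, so $\phi = c\,\id$. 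This shows $\End_{\g \otimes A} V = \kk\,\id \cong \kk$.

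The main point requiring care is the claim that $\dim V_\lambda = 1$, i.e.\ that $\lambda$ genuinely occurs with multiplicity one and is maximal. This is where one uses that $\g$ (hence $\g \otimes A$) admits the triangular decomposition fixed in Section~\ref{sec:quasifinite-hw}, that the zero weight space of $\g$ equals $\h$ (noted in Section~\ref{subsec:Lie-superalg} for basic classical $\g$ and $\fsl(n,n)$, and clear for reductive $\g$), and the ordering on $P$ coming from $Q^+$. Once one writes $V = U(\n^- \otimes A)\,U(\h \otimes A)\,U(\n^+ \otimes A)\,v = U(\n^- \otimes A)v$ via PBW and notes that $U(\n^- \otimes A)$ is spanned by monomials of weight in $-Q^+$, with only the constant monomial contributing to weight $\lambda$, the multiplicity-one statement follows and the rest of the argument is formal. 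No finiteness hypothesis on $A$ is needed here.
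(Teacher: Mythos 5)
Your proof is correct and follows essentially the same route as the paper: the paper's one-line argument also rests on the one-dimensionality of the highest weight space together with the fact that a $(\g \otimes A)$-endomorphism must carry a highest weight vector to a highest weight vector of the same weight, and then uses that the highest weight vector generates $V$. You have simply filled in the PBW/triangular-decomposition justification for $\dim V_\lambda = 1$ that the paper leaves implicit.
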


\begin{proof}
  This follows easily from the fact that any $(\g \otimes A)$-module endomorphism of $V$ must send a highest weight vector to a highest weight vector, together with the fact that the highest weight space of $V$ is one-dimensional.
\end{proof}

\begin{rem}
  Note that the algebra of endomorphisms of an irreducible module over a Lie superalgebra is \emph{not} always isomorphic to $\kk$.  Indeed, in some cases this algebra of endomorphisms is spanned (over $\kk$) by the identity and an involution interchanging the even and odd parts of the module (see \cite[p.~609]{Kac78}).
\end{rem}

\begin{prop}[Density Theorem] \label{prop:density-theorem}
  Suppose $V$ is an irreducible highest weight $(\g \otimes A)$-module, and let $\rho : U(\g \otimes A) \to \End_\kk V$ be the corresponding representation of the universal enveloping superalgebra.  Let $\kappa \in \End_\kk V$ and $v_1,\dots,v_n \in V$.  Then there exists $x \in U(\g \otimes A)$ such that $x v_i = \kappa(v_i)$ for $i=1,\dots,n$.  If $V$ is finite dimensional, then $\rho(U(\g \otimes A)) = \End_\kk V$.
\end{prop}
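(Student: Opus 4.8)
The plan is to deduce the Density Theorem from a Jacobson-style density argument, using Lemma~\ref{lem:Schur} as the crucial input. Since $V$ is an irreducible highest weight module, Lemma~\ref{lem:Schur} tells us $\End_{\g \otimes A} V = \kk$. This is precisely the hypothesis needed to run the Jacobson Density Theorem for the image $\rho(U(\g \otimes A)) \subseteq \End_\kk V$: the $U(\g \otimes A)$-module $V$ is simple, and its endomorphism ring is the base field $\kk$, so $\rho(U(\g \otimes A))$ is a dense subring of $\End_\kk V = \End_{\End_{\g \otimes A} V} V$. Density in the relevant topology is exactly the statement that for any finite tuple $v_1, \dots, v_n \in V$ and any $\kappa \in \End_\kk V$, there is $x \in U(\g \otimes A)$ with $\rho(x) v_i = \kappa(v_i)$ for all $i$; this gives the first assertion.

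First I would verify carefully that the hypotheses of the Jacobson Density Theorem genuinely apply here. The superalgebra $U(\g \otimes A)$ is an ordinary (non-super) associative $\kk$-algebra once we forget the $\Z_2$-grading, and $V$ is a module over it in the usual sense; irreducibility of $V$ as a $(\g \otimes A)$-module is the same as simplicity of $V$ as a $U(\g \otimes A)$-module. The one subtlety is that in the super setting one might worry about whether "endomorphism" means even endomorphism or all endomorphisms — but Lemma~\ref{lem:Schur} as proved gives $\End_{\g \otimes A} V = \kk$ for the full endomorphism algebra (the argument there only uses that an endomorphism preserves the one-dimensional highest weight space), so there is no gap. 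Then the classical density theorem applies verbatim.

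For the second assertion, suppose $V$ is finite dimensional, say $\dim_\kk V = N$. Apply the first part with $v_1, \dots, v_N$ a basis of $V$: given any $\kappa \in \End_\kk V$, there is $x \in U(\g \otimes A)$ with $\rho(x) v_i = \kappa(v_i)$ for all $i$, and since the $v_i$ span $V$ this forces $\rho(x) = \kappa$. Hence $\rho(U(\g \otimes A)) = \End_\kk V$, as claimed.

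I do not anticipate a genuine obstacle: the content is essentially a citation of the Jacobson Density Theorem combined with the already-established Schur-type lemma. The only point requiring a moment's care is confirming that Lemma~\ref{lem:Schur} supplies the commutant $\End_{\g\otimes A} V = \kk$ in the form needed (full, not just even, endomorphisms), which it does; everything else is the standard argument, and the finite-dimensional case is an immediate corollary obtained by taking the $v_i$ to be a basis.
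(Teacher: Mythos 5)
Your proposal is correct and follows essentially the same path as the paper: invoke Lemma~\ref{lem:Schur} to get $\End_{\g\otimes A} V = \kk$, then cite the Jacobson Density Theorem, with the finite-dimensional case following by taking the $v_i$ to be a basis. The only cosmetic difference is in how the super subtlety is dispatched: you propose forgetting the $\Z_2$-grading and working with $U(\g\otimes A)$ as an ordinary associative algebra, whereas the paper says "we consider only homogeneous elements"; both correctly reduce to the classical density theorem, and your observation that the parity issue is harmless because the highest weight space is purely even (and one-dimensional) is exactly the point that makes either route work.
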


\begin{proof}
  By Lemma~\ref{lem:Schur}, we have $\End_{U(\g \otimes A)} V = \kk$.  The result then follows from the Jacobson Density Theorem (see, for example, \cite[Thm.~XVII.3.2]{Lan02}), where we consider only homogeneous elements.  See also \cite[Prop.~8.2]{Che95}.
\end{proof}

\begin{lem} \label{lem:outer-tensor-irred}
   Suppose $\g^1, \g^2$ are reductive Lie algebras, basic classical Lie superalgebras, or $\mathfrak{sl}(n,n)$, $n \ge 1$, and $A^1, A^2$ are commutative associative algebras.  If $V^i$ is an irreducible highest weight $(\g^i \otimes A^i)$-module for $i=1,2$, then $V^1 \otimes V^2$ is an irreducible highest weight module for $(\g^1 \otimes A^1) \oplus (\g^2 \otimes A^2)$.
\end{lem}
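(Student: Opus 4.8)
The plan is to reduce the statement about the outer tensor product to the Density Theorem (Proposition~\ref{prop:density-theorem}) applied to each factor, exactly as one does for ordinary reductive Lie algebras, but taking care with signs coming from the $\Z_2$-grading. First I would verify that $V^1 \otimes V^2$ is a highest weight module: if $v^i$ is a highest weight vector of $V^i$ with highest weight $\psi_i \in (\h^i \otimes A^i)^*$, then $v^1 \otimes v^2$ is annihilated by $\n^+ \otimes A^i$ for both $i$ (acting on the appropriate factor), spans a one-dimensional $U((\h^1 \otimes A^1) \oplus (\h^2 \otimes A^2))$-module, and generates $V^1 \otimes V^2$ under $U((\g^1 \otimes A^1) \oplus (\g^2 \otimes A^2)) \cong U(\g^1 \otimes A^1) \otimes U(\g^2 \otimes A^2)$, since $U(\g^i \otimes A^i) v^i = V^i$. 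This uses that the triangular decomposition of the direct sum is the direct sum of the triangular decompositions, and that $Q^+$ for the direct sum is the product of the $Q^+$'s, so there is a well-defined highest weight.

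Next I would prove irreducibility. Let $0 \ne W \subseteq V^1 \otimes V^2$ be a submodule and pick $0 \ne w \in W$; write $w = \sum_{k=1}^r a_k \otimes b_k$ with $r$ minimal, so that $\{a_1,\dots,a_r\} \subseteq V^1$ and $\{b_1,\dots,b_r\} \subseteq V^2$ are each linearly independent. By the Density Theorem applied to $V^1$ (the finite-dimensionality clause is not needed — the version for finitely many vectors suffices), there is $x \in U(\g^1 \otimes A^1)$ with $x a_1 = a_1$ and $x a_k = 0$ for $k \ge 2$; acting by $x$ on $w$ (and tracking the Koszul sign, which only rescales each term and does not affect whether it vanishes) yields $\pm a_1 \otimes b_1 \in W$ after possibly adjusting homogeneity, hence $a_1 \otimes b_1 \in W$. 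Now applying the Density Theorem to $V^1$ again to move $a_1$ to an arbitrary vector of $V^1$, and to $V^2$ to move $b_1$ to an arbitrary vector of $V^2$, we get all of $V^1 \otimes V^2 \subseteq W$, so $W = V^1 \otimes V^2$. The only subtlety is that the Density Theorem as stated produces $x \in U(\g^i \otimes A^i)$ matching a \emph{homogeneous} linear endomorphism on finitely many vectors; I would apply it separately on the even and odd homogeneous components of the $a_k$'s (and $b_k$'s), which is harmless.

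The main obstacle I anticipate is bookkeeping with the $\Z_2$-grading rather than any real mathematical difficulty: one must make sure that the standard "minimal-length tensor" argument for irreducibility of outer tensor products of modules over a direct sum goes through when $U((\g^1 \otimes A^1) \oplus (\g^2 \otimes A^2))$ is only isomorphic to $U(\g^1 \otimes A^1) \otimes U(\g^2 \otimes A^2)$ as a \emph{super}algebra, so that the action of $x \otimes 1$ on $a \otimes b$ carries a sign $(-1)^{|x||a|}$ when $x$ is odd. Since these signs are nonzero scalars, they do not obstruct the argument — they only force one to work component-by-component in the $\Z_2$-grading — but they do need to be mentioned. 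I expect the write-up to be short: establish the highest weight vector, invoke Proposition~\ref{prop:density-theorem} twice (once per factor, on homogeneous components), and conclude.
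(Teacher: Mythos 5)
Your proposal is correct and would yield a valid proof, but it takes a different route from the paper's argument. The paper's proof does not use the "minimal-length tensor" projection argument you describe. Instead, it writes the arbitrary element $z$ with coefficients from $V^1$ that are weight vectors, picks a \emph{minimal} weight $\nu$ among these coefficients, and uses the Density Theorem together with PBW to produce a \emph{weight vector} $x_1 \in U(\n^{1,+} \otimes A^1)$ of weight $\lambda_1 - \nu$ taking a chosen coefficient to a highest weight vector; a weight computation then forces $x_1 v_w$ to be a scalar multiple of the highest weight vector $v^1$ for \emph{every} remaining coefficient $v_w$ (since it either overshoots the top weight, hence vanishes, or lands in the one-dimensional highest weight space). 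This yields $x_1 z = v^1 \otimes (\text{nonzero})$, and the argument is repeated on the $V^2$ side. Your approach is shorter and avoids the weight-space analysis entirely: you pick a minimal-length representative $\sum_k a_k \otimes b_k$ and invoke the Density Theorem to build a projection-like element $x$ with $x a_1 = a_1$ and $x a_k = 0$ for $k \ge 2$, producing a pure tensor directly. Both arguments ultimately rest on Lemma~\ref{lem:Schur} and Proposition~\ref{prop:density-theorem}; the paper's version is closer to the classical weight-theoretic treatment and ties in explicitly with the PBW decomposition it wants to invoke elsewhere, while yours is the standard ``outer tensor of simples over a tensor-product of dense algebras'' argument and is more elementary.

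One small correction on the sign bookkeeping: the action of $x \otimes 1$ on $a \otimes b$ carries \emph{no} Koszul sign (the sign $(-1)^{|u_2||m_1|}$ in $(u_1 \otimes u_2)(m_1 \otimes m_2)$ vanishes when $u_2 = 1$). The sign only appears for $1 \otimes y$ acting as $(-1)^{|y||a|} a \otimes (yb)$, and as you note it is a nonzero scalar that does not affect membership in a submodule. So your first Density step is cleaner than you feared; the parity care is needed only on the second-factor step, and your plan to work homogeneous-component-by-component handles it.
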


\begin{proof}
  For $i=1,2$, we have the triangular decomposition $\g^i = \n^{i,-} \oplus \h^i \oplus \n^{i,+}$ as described at the beginning of this section.  For each weight $\lambda$ of $V^2$, let $\cB_\lambda$ be a basis for $V^2_\lambda$.  Let $z$ be an arbitrary nonzero element of $V^1 \otimes V^2$.  Then $z$ can be written as
  \[
    z = \sum_\lambda \sum_{w \in \cB_\lambda} v_w \otimes w,
  \]
  where the first sum is over the weights of $V^2$ and the $v_w \in V^1$ are weight vectors with only a finite number of them nonzero.  Now, let $\nu$ be a minimal weight among the weights of the (nonzero) $v_w$ and fix a nonzero $v_{w'}$ of weight $\nu$.  Proposition~\ref{prop:density-theorem} and the PBW Theorem (Lemma~\ref{lem:PBW}) imply that there exists a weight vector $x_1 \in U(\n^{1,+} \otimes A^1)$ such that $x_1 v_{w'}$ is a highest weight vector of $V^1$.  Then it follows from our choices that, for all $w \in \cB_\lambda$, $\lambda$ a weight of $V^2$, the vector $x_1 v_w$ is a either zero or a highest weight vector of $V^1$.  Therefore, if $v^1$ is a highest weight vector of $V^1$, we have
  \[
    x_1z = v^1 \otimes \left( \sum_{\lambda \in S} \sum_{w \in \cB_\lambda} a_w w \right) \ne 0,
  \]
  for some $a_w \in \kk$, not all zero (but with only finitely many nonzero), and a nonempty finite set $S$ of weights of $V^2$.  An argument similar to the one above shows that there exists $x_2 \in U(\n^{2,+} \otimes A^2)$ such that
  \[
    x_2x_1z = v^1 \otimes v^2,
  \]
  where $v^2$ is a highest weight vector of $V^2$.  Since $v^1 \otimes v^2$ generates $V^1 \otimes V^2$ as a module for $(\g^1 \otimes A^1) \oplus (\g^2 \otimes A^2)$, so does $z$.  It follows that $V^1 \otimes V^2$ is irreducible.  It is also clearly highest weight, with highest weight vector $v^1 \otimes v^2$.
\end{proof}

\begin{cor} \label{cor:irreds-for-tensor-prods}
  Suppose $\g^1, \g^2$ are reductive Lie algebras, basic classical Lie superalgebras, or $\mathfrak{sl}(n,n)$, $n \ge 1$, and $A^1, A^2$ are commutative associative algebras.  Then any irreducible finite dimensional module $V$ for $(\g^1 \otimes A^1) \oplus (\g^2 \otimes A^2)$ is of the form $V^1 \otimes V^2$ for irreducible finite dimensional $(\g^i \otimes A^i)$-modules $V^i$, $i=1,2$.
\end{cor}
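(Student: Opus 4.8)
The plan is to deduce this corollary from Lemma~\ref{lem:outer-tensor-irred} together with a general structural fact: an irreducible finite dimensional module over a direct sum $\mathfrak{L}^1 \oplus \mathfrak{L}^2$ of Lie superalgebras, where each $\mathfrak{L}^i$ is perfect (or at least where Schur's lemma is available for the relevant pieces), decomposes as an outer tensor product $V^1 \boxtimes V^2$. First I would restrict $V$ to the subalgebra $\mathfrak{L}^1 := \g^1 \otimes A^1$. Since $V$ is finite dimensional, there is an irreducible $\mathfrak{L}^1$-submodule $V^1 \subseteq V$; by Lemma~\ref{lem:fd-implies-hw} this $V^1$ is in fact a highest weight module, so by Lemma~\ref{lem:Schur} we have $\End_{\mathfrak{L}^1} V^1 = \kk$. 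Let $V^2 := \Hom_{\mathfrak{L}^1}(V^1, V)$, on which $\mathfrak{L}^2 := \g^2 \otimes A^2$ acts (since the two subalgebras commute inside $(\g^1 \otimes A^1) \oplus (\g^2 \otimes A^2)$), and consider the natural evaluation map $V^1 \otimes V^2 \to V$, $v \otimes \varphi \mapsto \varphi(v)$, which is a homomorphism of $\bigl((\g^1 \otimes A^1) \oplus (\g^2 \otimes A^2)\bigr)$-modules.

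The key steps, in order, are: (1) observe that because $\End_{\mathfrak{L}^1} V^1 = \kk$ the evaluation map $V^1 \otimes V^2 \to V$ is injective — this is the standard argument that a semisimple isotypic component is a tensor product of the irreducible type with its multiplicity space, and it uses finite dimensionality and Schur in the form just established; (2) observe that the image of the evaluation map is a nonzero submodule of $V$, hence by irreducibility of $V$ it is all of $V$, so $V \cong V^1 \otimes V^2$ as modules for $(\g^1 \otimes A^1) \oplus (\g^2 \otimes A^2)$; (3) deduce that $V^2$ is finite dimensional (since $V$ and $V^1$ are) and irreducible as an $\mathfrak{L}^2$-module — irreducibility of $V^2$ follows because any proper nonzero $\mathfrak{L}^2$-submodule $W^2 \subsetneq V^2$ would give a proper nonzero submodule $V^1 \otimes W^2 \subsetneq V$, contradicting irreducibility of $V$. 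This gives the desired form $V = V^1 \otimes V^2$ with each $V^i$ an irreducible finite dimensional $(\g^i \otimes A^i)$-module.

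I should also note the (unstated but implicit) converse-type consistency check: Lemma~\ref{lem:outer-tensor-irred} guarantees that whenever $V^1, V^2$ are irreducible finite dimensional (equivalently, by Lemma~\ref{lem:fd-implies-hw}, irreducible highest weight) modules, the outer tensor product $V^1 \otimes V^2$ is again irreducible, so the decomposition produced above genuinely lands in the target class and the correspondence is well-behaved; but for the statement of the corollary as written, only the forward direction — every irreducible finite dimensional $V$ has the stated form — needs proof, and that is exactly steps (1)–(3).

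The main obstacle I anticipate is the $\Z_2$-graded bookkeeping in step (1): one must check that the evaluation map and the identification of $V^2$ with a multiplicity space respect the superalgebra structure, in particular that ``homogeneous Schur'' (as invoked in the proof of Proposition~\ref{prop:density-theorem}) suffices to force the evaluation map to be injective even though $\End$ over a Lie superalgebra need not be $\kk$ in general — here it is $\kk$ because $V^1$ is highest weight (Lemma~\ref{lem:Schur}), so this is fine, but it is the point that requires care. A secondary, purely formal point is ensuring the $\mathfrak{L}^2$-action on $\Hom_{\mathfrak{L}^1}(V^1, V)$ is well-defined, which is immediate from $[\mathfrak{L}^1, \mathfrak{L}^2] = 0$ inside the direct sum. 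None of this involves serious computation; the content is entirely the standard double-centralizer/isotypic-component argument, adapted to the graded setting.
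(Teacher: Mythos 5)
Your proof is correct, but it takes a genuinely different route from the one in the paper. The paper's proof is short and cites an external structural result: it views $V$ as a module over $U(\g^1 \otimes A^1) \otimes U(\g^2 \otimes A^2)$, invokes \cite[Prop.~8.4]{Che95} to conclude that $V$ is isomorphic to $V^1 \otimes V^2$ or a proper submodule thereof for some irreducible $V^i$, and then applies Lemma~\ref{lem:outer-tensor-irred} to rule out the proper-submodule case. You instead reconstruct the decomposition directly via the double-centralizer/multiplicity-space argument: you take an irreducible $\frL^1$-submodule $V^1 \subseteq V$, use Lemmas~\ref{lem:fd-implies-hw} and~\ref{lem:Schur} to get $\End_{\frL^1}(V^1) = \kk$, form the multiplicity space $V^2 = \Hom_{\frL^1}(V^1, V)$, and show the evaluation map onto $V$ is an isomorphism; irreducibility of $V^2$ then falls out of irreducibility of $V$ for free. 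Notably, your argument does not need Lemma~\ref{lem:outer-tensor-irred} at all (you correctly observe you only invoke it as a consistency check), nor Cheng's result, so it is more self-contained; the trade-off is that the super sign conventions are genuinely delicate. In particular, your remark that the $\frL^2$-action on $\Hom_{\frL^1}(V^1,V)$ is ``immediate'' understates the issue: with the strict-commuting convention for $\Hom_{\frL^1}$ the naive action $(x\cdot\varphi)(v) = x\varphi(v)$ fails to land back in $\Hom_{\frL^1}$ when $x$ is odd, since $x$ and $y \in \frL^1$ only supercommute in $U(\frL^1 \oplus \frL^2)$. One must use the super-commuting convention $\varphi(yv) = (-1)^{|\varphi||y|} y\varphi(v)$ (and then the evaluation map $V^2 \otimes V^1 \to V$, $\varphi \otimes v \mapsto \varphi(v)$, is a genuine $\frL^1 \oplus \frL^2$-morphism after the Koszul sign is accounted for). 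The injectivity argument also needs to allow for $\Pi V^1$-isotypic pieces, handled by the observation that $\Hom_{\frL^1}(V^1, \Pi V^1)$ is one-dimensional in odd degree by Schur; the dimension count then closes the argument as you indicate. A small structural bonus of your approach: it only needs the reductive/basic-classical/$\fsl(n,n)$ hypothesis on $\g^1$ (to apply Lemmas~\ref{lem:fd-implies-hw} and~\ref{lem:Schur} to $V^1$), since $V^2$ is shown irreducible without appealing to those lemmas on the $\g^2$ side, whereas the paper's route via Lemma~\ref{lem:outer-tensor-irred} uses the hypothesis symmetrically.
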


\begin{proof}
  Suppose $V$ is an irreducible finite dimensional module for $(\g^1 \otimes A^1) \oplus (\g^2 \otimes A^2)$.  Then it is also an irreducible module for $U((\g^1 \otimes A^1) \oplus (\g^2 \otimes A^2)) \cong U(\g^1 \otimes A^1) \otimes U(\g^2 \otimes A^2)$ (tensor product as superalgebras).  By \cite[Prop.~8.4]{Che95}, there exist irreducible $U(\g^i \otimes A^i)$-modules $V^i$, $i=1,2$, such that $V$ is either isomorphic to $V^1 \otimes V^2$ or a proper submodule of this tensor product.  Since this tensor product is irreducible by Lemma~\ref{lem:outer-tensor-irred}, the result follows.
\end{proof}

\begin{prop} \label{prop:tensor-product}
  The tensor product of two irreducible highest weight $(\g \otimes A)$-modules with disjoint supports is irreducible.
\end{prop}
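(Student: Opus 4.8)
The plan is to exploit the disjointness of supports to split the relevant quotient of $\g \otimes A$ as a direct sum, thereby reducing the statement to Lemma~\ref{lem:outer-tensor-irred}.

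Write $V^1, V^2$ for the two modules and set $I_j = \Ann_A V^j$, so that $(\g \otimes I_j) V^j = 0$ by Definition~\ref{def:annihilator} and, by hypothesis, $\Supp I_1 \cap \Supp I_2 = \emptyset$. Exactly as in the proof of Lemma~\ref{lem:ideal-product-intersection}, disjointness of supports forces $I_1 + I_2 = A$, whence $I_1 I_2 = I_1 \cap I_2$ and, by the Chinese Remainder Theorem, $A/(I_1 I_2) \cong (A/I_1) \times (A/I_2)$. Tensoring with $\g$ gives an isomorphism of Lie superalgebras
\[
  \g \otimes \bigl( A/(I_1 I_2) \bigr) \cong (\g \otimes A/I_1) \oplus (\g \otimes A/I_2).
\]
Since $I_1 I_2 \subseteq I_j$ for $j = 1,2$, the ideal $\g \otimes (I_1 I_2)$ annihilates both $V^1$ and $V^2$, hence also the tensor-product module $V^1 \otimes V^2$; thus the $(\g \otimes A)$-action on $V^1 \otimes V^2$ factors through $\g \otimes (A/(I_1 I_2))$. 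One also checks directly that each $V^j$ is, in the evident way, an irreducible highest weight module over $\g \otimes A/I_j$: irreducibility and the defining properties of a highest weight vector all pass along the surjection $\g \otimes A \twoheadrightarrow \g \otimes A/I_j$, and $\g \otimes A/I_j$ is of the form covered by Lemma~\ref{lem:outer-tensor-irred}.

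The last step is to verify that, under the displayed isomorphism, $V^1 \otimes V^2$ with its standard tensor-product $(\g \otimes A)$-module structure (the one given by the coproduct) is precisely the outer tensor product of $V^1$ and $V^2$ as a module for $(\g \otimes A/I_1) \oplus (\g \otimes A/I_2)$; this is immediate once one notes that $u \otimes f$ acts on $V^j$ through its image $u \otimes (f + I_j)$. Lemma~\ref{lem:outer-tensor-irred} then shows this outer tensor product is irreducible, which completes the proof. As a byproduct one gets that $V^1 \otimes V^2$ is a highest weight module, with highest weight vector $v^1 \otimes v^2$, the product of the highest weight vectors of the factors. The only point needing genuine care is this last identification of the two module structures; everything else is routine bookkeeping with the Chinese Remainder Theorem.
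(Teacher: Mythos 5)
Your proposal is correct and follows the same route as the paper: reduce $\Ann_A V^1 \cdot \Ann_A V^2 = \Ann_A V^1 \cap \Ann_A V^2$ via Lemma~\ref{lem:ideal-product-intersection}, use the Chinese Remainder Theorem to get $\g \otimes (A/I_1I_2) \cong (\g \otimes A/I_1) \oplus (\g \otimes A/I_2)$, and conclude by applying Lemma~\ref{lem:outer-tensor-irred} to the outer tensor product. The paper phrases the factorization as a commutative diagram involving the diagonal embedding $\Delta\colon \g\otimes A \hookrightarrow (\g\otimes A)\oplus(\g\otimes A)$ rather than first passing to $\g\otimes(A/I_1I_2)$, but these are the same argument.
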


\begin{proof}
  Suppose $V^1$ and $V^2$ are irreducible highest weight $(\g \otimes A)$-modules with disjoint supports.  For $i=1,2$, let $I_i = \Ann_A V^i$ and let $\rho_i : \g \otimes A \to \End V^i$ be the representation corresponding to $V^i$.  The representation $\rho_1 \otimes \rho_2$ factors through the composition
  \begin{equation} \label{eq:tensor-product-factor-map}
    \g \otimes A \stackrel{\Delta}{\hookrightarrow} (\g \otimes A) \oplus (\g \otimes A) \twoheadrightarrow (\g \otimes A/I_1) \oplus (\g \otimes A/I_2),
  \end{equation}
  where $\Delta$ is the diagonal embedding and the second map is the obvious projection on each summand.  Since the supports of $I_1$ and $I_2$ are disjoint, we have $I_1 \cap I_2 = I_1 I_2$ by Lemma~\ref{lem:ideal-product-intersection}.   Therefore $A/I_1I_2 \cong (A/I_1) \oplus (A/I_2)$.  We thus have the following commutative diagram:
  \[
    \xymatrix{
      & (\g \otimes A) \oplus (\g \otimes A) \ar@{->>}[dr] & \\
      \g \otimes A \ar[ur]^(.4){\Delta} \ar@{->>}[dr] & & (\g \otimes A/I_1) \oplus (\g \otimes A/I_2) \\
      & \g \otimes A/I_1I_2 \ar[ur]_{\cong} &
    }
  \]
  It follows that the composition~\eqref{eq:tensor-product-factor-map} is surjective.  Since $V^1 \otimes V^2$ is irreducible as a module for $(\g \otimes A/I_1) \oplus (\g \otimes A/I_2)$ by Lemma~\ref{lem:outer-tensor-irred}, the result follows.
\end{proof}

\begin{cor} \label{cor:psi-disjoint-support-product}
  Suppose that $\psi_1,\dots,\psi_n \in (\h \otimes A)^*$ such that $V(\psi_1), \dots, V(\psi_n)$ have disjoint supports.  Then
  \[ \ts
    V \left( \sum_{i=1}^n \psi_i \right) = \bigotimes_{i=1}^n V(\psi_i).
  \]
\end{cor}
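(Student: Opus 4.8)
The plan is to induct on $n$, using Proposition~\ref{prop:tensor-product} for the inductive step. The base case $n=1$ is trivial (and the disjointness hypothesis is then vacuous), so all the work lies in passing from $n-1$ to $n$. Two auxiliary facts will be needed: (i) the support estimate $\Supp_A(V^1 \otimes V^2) \subseteq \Supp_A V^1 \cup \Supp_A V^2$ for arbitrary $(\g \otimes A)$-modules $V^1, V^2$; and (ii) that an irreducible highest weight $(\g \otimes A)$-module with highest weight $\psi \in (\h \otimes A)^*$ is necessarily isomorphic to $V(\psi)$, which is essentially recorded in the definition of $V(\psi)$ (the canonical surjection onto it from $U(\g \otimes A) \otimes_{U(\fb^+ \otimes A)} \kk_\psi$ has kernel a proper, hence maximal, submodule, so that kernel is $N(\psi)$).

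For (i), write $I_i = \Ann_A V^i$. Since $\g \otimes A$ acts on the tensor product through the diagonal map $x \mapsto x \otimes 1 + 1 \otimes x$ (with Koszul signs), any $f \in I_1 \cap I_2$ and $u \in \g$ give $(u \otimes f)(w_1 \otimes w_2) = \pm\big((u \otimes f)w_1\big) \otimes w_2 \pm w_1 \otimes \big((u \otimes f)w_2\big) = 0$, since $f \in I_i$ forces $(\g \otimes f)V^i = 0$. Hence the ideal $I_1 \cap I_2$ is contained in $\Ann_A(V^1 \otimes V^2)$. Passing to supports and using $\Supp(I_1 \cap I_2) = \Supp I_1 \cup \Supp I_2$ (immediate from $I_1 I_2 \subseteq I_1 \cap I_2 \subseteq I_i$ and the primeness of maximal ideals, exactly as in the proof of Lemma~\ref{lem:ideal-product-intersection}) gives (i). Iterating, $\Supp_A\big(\bigotimes_{i=1}^{k} V(\psi_i)\big) \subseteq \bigcup_{i=1}^{k} \Supp_A V(\psi_i)$ for all $k$.

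For the inductive step, assume the statement for $n-1$, so that $W := \bigotimes_{i=1}^{n-1} V(\psi_i)$ equals $V\big(\sum_{i=1}^{n-1}\psi_i\big)$ and is an irreducible highest weight module. By (i), $\Supp_A W \subseteq \bigcup_{i=1}^{n-1} \Supp_A V(\psi_i)$, which is disjoint from $\Supp_A V(\psi_n)$ by hypothesis; hence Proposition~\ref{prop:tensor-product} applies to $W$ and $V(\psi_n)$ and shows that $\bigotimes_{i=1}^{n} V(\psi_i) = W \otimes V(\psi_n)$ is irreducible. If $v$ and $v_n$ are highest weight vectors of $W$ and $V(\psi_n)$, then $v \otimes v_n$ is killed by $\n^+ \otimes A$, is an $\h \otimes A$-eigenvector of weight $\sum_{i=1}^{n-1}\psi_i + \psi_n = \sum_{i=1}^{n}\psi_i$, and (by irreducibility) generates the module, so $\bigotimes_{i=1}^{n} V(\psi_i)$ is an irreducible highest weight module of highest weight $\sum_{i=1}^{n}\psi_i$; by (ii) it is isomorphic to $V\big(\sum_{i=1}^{n}\psi_i\big)$, completing the induction. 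I do not foresee a genuine obstacle here: the only substantial input, Proposition~\ref{prop:tensor-product}, is already available, and the remaining steps are routine bookkeeping. The one point that deserves a moment's care is that the inductive hypothesis is applied to the collection $V(\psi_1), \dots, V(\psi_{n-1})$, whose supports are still pairwise disjoint, so the hypothesis genuinely applies.
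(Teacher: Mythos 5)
Your proof is correct and follows essentially the same route as the paper: reduce to a tensor product of two modules via induction, use Proposition~\ref{prop:tensor-product} for irreducibility, and observe that the tensor of highest weight vectors has the right $(\h\otimes A)^*$-weight. You are somewhat more careful than the paper at one spot: the paper simply says "the general result will then follow by induction," leaving implicit the support-containment estimate $\Supp_A(V^1\otimes V^2)\subseteq \Supp_A V^1\cup\Supp_A V^2$ that you verify explicitly in (i) to make the inductive hypothesis applicable.
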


\begin{proof}
  It suffices to prove the result for $n=2$, since the general result will then follow by induction.  Let $\lambda_i$ be the highest weight of $V(\psi_i)$, $i=1,2$.  Then $V(\psi_1)_{\lambda_1} \otimes V(\psi_2)_{\lambda_2}$ is a one-dimensional weight space of $V(\psi_1) \otimes V(\psi_2)$ of maximal weight $\lambda_1 + \lambda_2$.  Since $(\h \otimes A)^*$ clearly acts on this space via $\psi_1 + \psi_2$, it suffices to show that $V(\psi_1) \otimes V(\psi_2)$ is irreducible.  This follows immediately from Proposition~\ref{prop:tensor-product}.
\end{proof}

\begin{cor} \label{cor:prod-fd-irr-is-irr}
  The tensor product of irreducible finite dimensional $(\g \otimes A)$-modules with disjoint supports is irreducible.
\end{cor}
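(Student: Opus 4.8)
The plan is to induct on the number of tensor factors, using Proposition~\ref{prop:tensor-product} to combine two factors at a time; the only real content is keeping track of supports through the induction.

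First consider the case of two factors. Let $V^1$ and $V^2$ be irreducible finite dimensional $(\g \otimes A)$-modules with disjoint supports. By Lemma~\ref{lem:fd-implies-hw}, each $V^i$ is a highest weight module, and since their supports are disjoint, Proposition~\ref{prop:tensor-product} gives that $V^1 \otimes V^2$ is irreducible.

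For the general case, let $V^1, \dots, V^n$ be irreducible finite dimensional $(\g \otimes A)$-modules with pairwise disjoint supports, and set $I_i = \Ann_A V^i$. Arguing by induction on $n$, we may assume $W := V^1 \otimes \dots \otimes V^{n-1}$ is irreducible; it is also finite dimensional. Any $f \in \bigcap_{i=1}^{n-1} I_i$ satisfies $(\g \otimes f) V^i = 0$ for $i = 1, \dots, n-1$, and hence $(\g \otimes f) W = 0$, since $\g \otimes f$ acts on the tensor product by the (super) Leibniz rule. As $\bigcap_{i=1}^{n-1} I_i$ is an ideal of $A$, maximality of $\Ann_A W$ yields $\bigcap_{i=1}^{n-1} I_i \subseteq \Ann_A W$. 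Therefore, if $\sm \in \Supp_A W = \Supp \Ann_A W$, then $\sm \supseteq \bigcap_{i=1}^{n-1} I_i$, and since $\sm$ is prime we get $\sm \supseteq I_j$ for some $j \le n-1$, i.e.\ $\sm \in \Supp_A V^j$. Thus $\Supp_A W \subseteq \bigcup_{i=1}^{n-1} \Supp_A V^i$, which is disjoint from $\Supp_A V^n$ by hypothesis. Applying the two-factor case to $W$ and $V^n$ shows that $W \otimes V^n = V^1 \otimes \dots \otimes V^n$ is irreducible.

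The only step that is more than a citation is the inclusion $\bigcap_{i=1}^{n-1} I_i \subseteq \Ann_A W$, and even that is immediate from the Leibniz rule; so there is no serious obstacle here, the corollary being essentially a bookkeeping consequence of Proposition~\ref{prop:tensor-product} and Lemma~\ref{lem:fd-implies-hw}.
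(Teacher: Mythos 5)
Your proof is correct and follows the same route as the paper, which simply cites Lemma~\ref{lem:fd-implies-hw} and Proposition~\ref{prop:tensor-product} and leaves the induction on the number of factors implicit. Your explicit verification that $\Supp_A(V^1\otimes\cdots\otimes V^{n-1})\subseteq\bigcup_{i<n}\Supp_A V^i$ (via $\bigcap_{i<n}I_i\subseteq\Ann_A W$ and primality of maximal ideals) is exactly the bookkeeping the paper takes for granted.
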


\begin{proof}
  This follows from Lemma~\ref{lem:fd-implies-hw} and Proposition~\ref{prop:tensor-product}.
\end{proof}

\begin{lem} \label{lem:psi-annilator}
  Suppose that $\psi \in (\h \otimes A)^*$ and that $I$ is an ideal of $A$.  Then $\psi(\h \otimes I) = 0$ if and only if $(\g \otimes I) V(\psi)=0$.
\end{lem}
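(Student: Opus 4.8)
The plan is to handle the two directions separately. The implication ``$(\g \otimes I)V(\psi) = 0 \implies \psi(\h \otimes I) = 0$'' is immediate: letting $v_\psi$ be the image in $V(\psi)$ of the canonical generator, every $h \otimes f \in \h \otimes I \subseteq \g \otimes I$ acts on $v_\psi$ both as zero and as the scalar $\psi(h \otimes f)$, and since $v_\psi \ne 0$ this forces $\psi(h \otimes f) = 0$.

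For the converse, I would pass to the quotient algebra. Assume $\psi(\h \otimes I) = 0$. Since $\h \otimes I$ is precisely the kernel of the canonical surjection $\h \otimes A \twoheadrightarrow \h \otimes (A/I)$, the functional $\psi$ descends to a well-defined $\bar\psi \in (\h \otimes (A/I))^*$. Now $\g \otimes (A/I)$ is a map superalgebra to which the construction of $V(\cdot)$ applies, so we may form $V(\bar\psi)$, with generator $\bar v$. Pulling $V(\bar\psi)$ back along the canonical surjection $p \colon \g \otimes A \twoheadrightarrow \g \otimes (A/I)$ produces a $(\g \otimes A)$-module $W$ which is irreducible (as $p$ is surjective, the $(\g \otimes A)$-submodules of $W$ are exactly the $(\g \otimes (A/I))$-submodules of $V(\bar\psi)$). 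A direct check shows that $\bar v \in W$ is killed by $\n^+ \otimes A$, generates $W$ over $\g \otimes A$, and is acted on by $\h \otimes A$ via the composite $\h \otimes A \xrightarrow{p} \h \otimes (A/I) \xrightarrow{\bar\psi} \kk$, which is $\psi$ by construction of $\bar\psi$. Hence $W$ is an irreducible highest weight $(\g \otimes A)$-module of highest weight $\psi$, so $W \cong V(\psi)$ by the uniqueness (up to isomorphism) of the irreducible highest weight module with a given highest weight functional. Since $p(\g \otimes I) = 0$, we conclude $(\g \otimes I)V(\psi) = (\g \otimes I)W = 0$.

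The one point that needs care — and the only real obstacle — is the identification $W \cong V(\psi)$: it relies on the facts that the composite character above equals $\psi$ (the defining property of $\bar\psi$) and that any irreducible highest weight module with highest weight $\psi$ must be the unique irreducible quotient $V(\psi)$ of $U(\g \otimes A) \otimes_{U(\fb^+ \otimes A)} \kk_\psi$, as recorded just after the definition of $V(\psi)$. Equivalently, one could phrase the argument as showing that $\g \otimes I$ — which is an ideal of $\g \otimes A$ — annihilates $v_\psi$ (it kills the $\fb^+ \otimes I$ part at once, and the pullback identification disposes of the $\n^- \otimes I$ part), after which Lemma~\ref{lem:annilating-ideal} yields $(\g \otimes I)V(\psi) = 0$. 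I would present the pullback version, as it is the cleanest.
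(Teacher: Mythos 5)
Your proof is correct, but it takes a genuinely different route from the paper's. The paper proves the nontrivial implication directly at the level of the module: taking a highest weight vector $v$ of $V(\psi)$, it notes that $\n^+ \otimes I$ and $\h \otimes I$ kill $v$, then shows by induction on $\hgt\lambda$ that $(\fb^-_\lambda \otimes I)v = 0$ for every weight $\lambda$ of $\fb^-$, using that bracketing against $\n^+ \otimes A$ lowers the height; Lemma~\ref{lem:annilating-ideal} then upgrades $(\g \otimes I)v = 0$ to $(\g\otimes I)V(\psi)=0$. You instead argue at the level of algebras: descend $\psi$ to $\bar\psi \in (\h \otimes (A/I))^*$, form $V(\bar\psi)$ over the map superalgebra $\g \otimes (A/I)$ (legitimate, since $A/I$ is again a commutative associative unital algebra; if $I = A$ then $\psi = 0$ and the claim is trivial), pull back along $p \colon \g \otimes A \twoheadrightarrow \g\otimes(A/I)$, and identify the result with $V(\psi)$ using the fact that an irreducible highest weight $(\g \otimes A)$-module is determined by its highest weight functional, as recorded just after the definition of $V(\psi)$. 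Your approach trades the explicit height induction for that uniqueness fact; both are sound, and yours is arguably the more structural reading of the statement. One minor stylistic note: once the pullback is identified with $V(\psi)$, the full conclusion $(\g \otimes I)V(\psi) = 0$ is immediate because $\g \otimes I \subseteq \ker p$, so the closing remark offering an ``equivalent'' rephrasing through Lemma~\ref{lem:annilating-ideal} is unnecessary — in the pullback version that lemma plays no role.
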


\begin{proof}
  It is obvious that $(\g \otimes I)V(\psi)=0$ implies $\psi(\h \otimes I)=0$.  It thus remains to prove the other implication.

  Suppose $\psi \in (\h \otimes A)^*$ and $\psi(\h \otimes I)=0$ for some ideal $I$ of $A$.  Let $v$ be a highest weight vector of $V(\psi)$.  By Lemma~\ref{lem:annilating-ideal}, it suffices to show that $(\g \otimes I)v=0$.  We have $(\n^+ \otimes I)v=0$ since $v$ is a highest weight vector and $(\h \otimes I)v=0$ by the assumption on $I$.  For $\lambda = - \sum n_i \alpha_i$, $n_i \in \N$, where the $\alpha_i$ are the simple roots of $\g$, we define the \emph{height} of $\lambda$ to be $\hgt \lambda := \sum n_i$.  We show by induction on the height of $\lambda$ that $(\fb^-_\lambda \otimes I)v=0$ (here $\fb^-_\lambda$ is the weight space of $\fb^-$ of weight $\lambda$).  We already have the result for $\hgt \lambda = 0$ (since $\fb^-_0 = \h$).  Suppose that, for some $m \ge 0$, the results holds whenever $\hgt \lambda \le m$.  Fix $\lambda \in Q^- := - Q^+$ with $\hgt \lambda = m+1$.  Then
  \[
    (\n^+ \otimes A)(\fb^-_\lambda \otimes I)v \subseteq [\n^+ \otimes A, \fb^-_\lambda \otimes I]v + (\fb^-_\lambda \otimes I)(\n^+ \otimes A)v = ([\n^+,\fb^-_\lambda] \otimes I)v = 0,
  \]
  where the last equality follows from the induction hypothesis since any element of $[\n^+,\fb^-_\lambda]$ is either an element of $\fb^-_{\lambda'}$ with $\hgt \lambda' < \hgt \lambda$ or an element of $\n^+$.  This completes the proof of the inductive step.
\end{proof}

The following theorem classifies the highest weight quasifinite $(\g \otimes A)$-modules.

\begin{theo} \label{thm:hw-quasifinite-classification}
  Suppose $V=V(\psi)$ is an irreducible highest weight $(\g \otimes A$)-module, where $\g$ is either a basic classical Lie superalgebra or $\mathfrak{sl}(n,n)$, $n \ge 1$.  Then the following conditions are equivalent:
  \begin{enumerate}
    \item \label{item:V-quasifinite} The module $V$ is quasifinite.
    \item \label{item:I-annihilates-V} There exists an ideal $I$ of $A$ of finite codimension such that $(\g \otimes I)V=0$.
    \item \label{item:psi-zero-on-I} There exists an ideal $I$ of $A$ of finite codimension such that $\psi(\h \otimes I)=0$.
  \end{enumerate}
  If $A$ is finitely generated, then the above conditions are also equivalent to:
  \begin{enumerate}[resume]
    \item \label{item:V-finite-support} The module $V$ has finite support.
  \end{enumerate}
\end{theo}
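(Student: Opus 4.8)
The plan is to establish the cycle of implications $\eqref{item:V-quasifinite} \Rightarrow \eqref{item:psi-zero-on-I} \Rightarrow \eqref{item:I-annihilates-V} \Rightarrow \eqref{item:V-quasifinite}$, and then, under the assumption that $A$ is finitely generated, to close the loop with $\eqref{item:I-annihilates-V} \Leftrightarrow \eqref{item:V-finite-support}$. The implications $\eqref{item:psi-zero-on-I} \Rightarrow \eqref{item:I-annihilates-V}$ and $\eqref{item:I-annihilates-V} \Rightarrow \eqref{item:psi-zero-on-I}$ are essentially Lemma~\ref{lem:psi-annilator} applied to an ideal $I$ of finite codimension, so these are immediate. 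Similarly, $\eqref{item:I-annihilates-V} \Rightarrow \eqref{item:V-quasifinite}$ should be the easy direction: if $(\g \otimes I)V = 0$ for an ideal $I$ of finite codimension, then $V$ is a module for the finite-dimensional Lie superalgebra $\g \otimes (A/I)$, and any finite-dimensional Lie superalgebra has the property that its irreducible highest weight modules (equivalently, here, $V$ generated by a single weight vector killed by $\n^+ \otimes A$) are quasifinite — the weight spaces are spanned by PBW monomials in $\n^- \otimes (A/I)$ of a fixed total weight, and there are only finitely many such since $\n^- \otimes (A/I)$ is finite-dimensional and the zero weight space of $\g$ is $\h$ (so a given weight $\lambda \in Q^-$ decomposes in only finitely many ways as a sum of negative roots, each root space being finite-dimensional). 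One should invoke the PBW Theorem (Lemma~\ref{lem:PBW}) here and the fact, recorded in Section~\ref{subsec:Lie-superalg}, that the zero weight space of $\g$ equals $\h$.

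The substantive direction is $\eqref{item:V-quasifinite} \Rightarrow \eqref{item:psi-zero-on-I}$ (or equivalently $\Rightarrow \eqref{item:I-annihilates-V}$), and I expect this to be the main obstacle. The idea is: assume all weight spaces of $V$ are finite-dimensional, and produce a finite-codimension ideal $I$ with $\psi(\h \otimes I) = 0$. Consider a simple root $\alpha_i$ and the corresponding root vectors; look at the weight space $V_{\lambda - \alpha_i}$, where $\lambda$ is the highest weight. This space is spanned by vectors of the form $(x_{-\alpha_i} \otimes f)v$ for $f$ ranging over $A$ (together possibly with $(x_{-\alpha_i}' \otimes f)v$ if the root space $\g_{-\alpha_i}$ is more than one-dimensional, which for simple roots in the distinguished setup it is not — but one should be careful and handle the general root-space dimension). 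Finite-dimensionality of $V_{\lambda - \alpha_i}$ forces a finite-codimension subspace $I_i$ of $A$ with $(x_{-\alpha_i} \otimes I_i)v = 0$, and then one argues (using the $\fsl_2$- or $\fsl(1,1)$-triple generated by $x_{\pm\alpha_i}$, or the contragredient structure) that $\psi$ vanishes on $\h_i \otimes I_i'$ for a possibly smaller finite-codimension subspace. The cleanest route is probably: the map $A \to V_{\lambda-\alpha_i}$, $f \mapsto (x_{-\alpha_i}\otimes f)v$, has finite-dimensional image, hence kernel $J_i$ of finite codimension; applying $x_{\alpha_i} \otimes 1$ and using $[x_{\alpha_i}, x_{-\alpha_i}] = h_i$ shows $\psi(h_i \otimes J_i) = 0$ up to a correction by $[x_{\alpha_i}, x_{-\alpha_i} \otimes f]$ terms — one must track that $[\n^+ \otimes A, x_{-\alpha_i}\otimes J_i]$ lands back in places already controlled. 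Taking $I$ to be the ideal generated by $\bigcap_i J_i'$ (intersected over all simple roots), or rather the largest ideal contained in a suitable finite-codimension subspace, one gets a finite-codimension ideal annihilating $\psi$ on $\h \otimes I$ since the $h_i$ span $\h$ (or $\h$ modulo center, which must be treated separately — recall $\g_{\bar 0}$ has at most one-dimensional center, and for $\fsl(n,n)$ the central element needs its own argument via a further weight space). The delicate points are (a) passing from a finite-codimension \emph{subspace} to a finite-codimension \emph{ideal} — here one uses that $A$ acting on a finite-dimensional space factors through a finite-codimension ideal, or directly that the annihilator of a finite-dimensional module over $A$ is a finite-codimension ideal (cf.\ the argument in Lemma~\ref{lem:annilating-ideal} adapted to the commutative setting), and (b) handling the center of $\h$.

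Finally, for the equivalence with \eqref{item:V-finite-support} under the hypothesis that $A$ is finitely generated: by Lemma~\ref{lem:g-perfect-annihilator} (applicable since $\g$ is perfect — this is noted in the table discussion in Section~\ref{subsec:Lie-superalg}, including for $\fsl(n,n)$) together with Lemma~\ref{lem:psi-annilator}, the ideal $\Ann_A V$ is exactly $\{f \in A : \psi(\h \otimes f) = 0\}$ when read correctly, or at least \eqref{item:I-annihilates-V} says precisely that $\Ann_A V$ has finite codimension. Then $V$ has finite support means $\Supp \Ann_A V$ is finite, and Lemma~\ref{lem:ideal-codim-implies-support} gives $\eqref{item:I-annihilates-V} \Rightarrow \eqref{item:V-finite-support}$ while Lemma~\ref{lem:ideal-support-implies-codim} (which requires $A$ finitely generated) gives the converse $\eqref{item:V-finite-support} \Rightarrow \eqref{item:I-annihilates-V}$. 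This last part is routine bookkeeping with the commutative-algebra lemmas already proved; the real content is the quasifinite $\Rightarrow$ finite-codimension step described above.
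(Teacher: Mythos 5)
Your overall plan is sound and the easy directions are handled correctly: \eqref{item:psi-zero-on-I} $\Leftrightarrow$ \eqref{item:I-annihilates-V} via Lemma~\ref{lem:psi-annilator}, \eqref{item:I-annihilates-V} $\Rightarrow$ \eqref{item:V-quasifinite} via PBW, and \eqref{item:I-annihilates-V} $\Leftrightarrow$ \eqref{item:V-finite-support} via the commutative-algebra lemmas. The gap is in the substantive direction \eqref{item:V-quasifinite} $\Rightarrow$ \eqref{item:I-annihilates-V}/\eqref{item:psi-zero-on-I}, and specifically at the point you yourself flag as delicate. You propose taking the kernel $J_\alpha$ of $f \mapsto (u \otimes f)v$ for $u \in \g_{-\alpha}$, which has finite codimension, and then passing to a finite-codimension \emph{ideal}; the alternatives you sketch for this last step do not obviously work. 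Regarding $V_{\lambda-\alpha}$ as an $A$-module is not natural (the map $f \mapsto (u\otimes f)v$ does not intertwine an $A$-action), and ``the largest ideal contained in a finite-codimension subspace'' is not in general of finite codimension (it is an \emph{infinite} intersection of conductors $(W : a)$, $a \in A$), so that route would need its own nontrivial argument and moreover would implicitly use finite generation of $A$, which the first three equivalences do not assume.

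The point you are missing, and which makes the proof short, is that the kernel is \emph{automatically} an ideal. For a positive root $\alpha$, define $I_\alpha$ as the kernel of $A \to \Hom_\kk(V_\lambda \otimes \g_{-\alpha}, V_{\lambda-\alpha})$, $f \mapsto (w \otimes u \mapsto (u \otimes f)w)$; this has finite codimension by quasifiniteness. Choose $h \in \h$ with $\alpha(h) \neq 0$. Then for $f \in I_\alpha$, $g \in A$, $u \in \g_{-\alpha}$, $w \in V_\lambda$,
\[
  0 = (h\otimes g)(u\otimes f)w = -\alpha(h)(u\otimes gf)w + (u\otimes f)(h\otimes g)w,
\]
and the last term vanishes because $(h\otimes g)w \in V_\lambda$ and $f \in I_\alpha$, so $(u\otimes gf)w = 0$ and $gf \in I_\alpha$. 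Thus $I_\alpha$ is an ideal, no conductor argument is needed, and $I := \bigcap_{\alpha \in \Delta^+} I_\alpha$ is a finite-codimension ideal killing $\n^- \otimes I$ on $V_\lambda$. Your second worry (the center of $\g_{\bar 0}$, in particular for $\fsl(n,n)$) is also resolved more cleanly by running the argument over \emph{all} positive roots rather than just simple ones: then $\h \otimes I \subseteq [\n^+ \otimes A, \n^- \otimes I]$ gives $(\h\otimes I)V_\lambda = 0$ directly, because $\h = [\n^+, \n^-]$ for these perfect $\g$, with no separate treatment of the center or of coroots $h_i = [x_{\alpha_i}, x_{-\alpha_i}]$ required. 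Once $(\g\otimes I)V_\lambda = 0$, Lemma~\ref{lem:annilating-ideal} promotes this to $(\g\otimes I)V = 0$.
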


\begin{proof}
  \eqref{item:V-quasifinite} $\Rightarrow$ \eqref{item:I-annihilates-V}: Let $\lambda$ be the highest weight of $V$.  Let $\alpha$ be a positive root of $\g$ and let $I_\alpha$ be the kernel of the linear map
  \[
    A \to \Hom_\kk (V_\lambda \otimes \g_{-\alpha},V_{\lambda-\alpha}),\quad f \mapsto (v \otimes u \mapsto (u \otimes f)v),\quad f \in A,\ v \in V_\lambda,\ u \in \g_{-\alpha}.
  \]
  It is clear that $I_\alpha$ is a linear subspace of $A$ of finite codimension.  We claim that $I_\alpha$ is in fact an ideal of $A$.  Indeed, since $\alpha \ne 0$, we can choose $h \in \h$ such that $\alpha(h) \ne 0$.  Then, for all $g \in A$, $f \in I_\alpha$, $v \in V_\lambda$, and $u \in \g_{-\alpha}$, we have
  \begin{align*}
    0 &= (h \otimes g)(u \otimes f)v \\
    &= [h \otimes g, u \otimes f]v + (u \otimes f)(h \otimes g)v \\
    &= -\alpha(h)(u \otimes gf)v + (u \otimes f)(h \otimes g)v.
  \end{align*}
  Now, since $(h \otimes g)v \in V_\lambda$ and $f \in I_\alpha$, the last term above is zero.  Since we also have $\alpha(h) \ne 0$, this implies that $(u \otimes gf)v=0$.  As this holds for all $v \in V_\lambda$ and $u \in \g_{-\alpha}$, we have $gf \in I_\alpha$.  Hence $I_\alpha$ is an ideal of $A$.

  Let $I$ be the intersection of all the $I_\alpha$.  Since $\g$ is finite dimensional (and thus has a finite number of positive roots), this intersection is finite and thus $I$ is also an ideal of $A$ of finite codimension.  We then have $(\n^- \otimes I)V_\lambda = 0$.  Since $\lambda$ is the highest weight of $V$, we also have $(\n^+ \otimes A)V_\lambda=0$.  Then, since $\h \otimes I \subseteq [\n^+ \otimes A, \n^- \otimes I]$,  we have $(\h \otimes I)V_\lambda=0$.  It follows that $(\g \otimes I)V_\lambda=0$.  Since $V_\lambda \ne 0$, it follows from Lemma~\ref{lem:annilating-ideal} that $(\g \otimes I)V=0$.

  \eqref{item:I-annihilates-V} $\Rightarrow$ \eqref{item:psi-zero-on-I}: Suppose \eqref{item:I-annihilates-V} holds and let $v$ be a highest weight vector of $V$.  Then, for any $\mu \in \h \otimes I$, $\psi(\mu)v = \mu v = 0$.  Thus $\psi(\h \otimes I)=0$.

  \eqref{item:psi-zero-on-I} $\Rightarrow$ \eqref{item:V-quasifinite}:  Suppose \eqref{item:psi-zero-on-I} holds.  Then, by Lemma~\ref{lem:psi-annilator}, $V(\psi)$ is naturally a module for the finite dimensional Lie superalgebra $\g \otimes (A/I)$.  Now, by the PBW Theorem (Lemma~\ref{lem:PBW}),
  \[
    V(\psi) = U(\g \otimes (A/I)) v = U(\n^- \otimes (A/I))v.
  \]
  Again by the PBW Theorem, the weight spaces of $U(\n^- \otimes (A/I))$ are finite dimensional.  Hence so are those of $V(\psi)$.

  Now suppose $A$ is finitely generated.  We prove that \eqref{item:I-annihilates-V} $\Leftrightarrow$ \eqref{item:V-finite-support}.  By definition, we have $\Supp_A V(\psi) = \Supp \Ann_A V(\psi)$.  Clearly \eqref{item:I-annihilates-V} is true if and only if $\Ann_A V(\psi)$ is of finite codimension.  Since $A$ is finitely generated, $\Ann_A V(\psi)$ is of finite codimension if and only if it has finite support (see Lemmas~\ref{lem:ideal-codim-implies-support} and~\ref{lem:ideal-support-implies-codim}).
\end{proof}

\begin{cor} \label{cor:finite-codim-annihilator}
  Let $V$ be a finite dimensional irreducible $(\g \otimes A)$-module, where $\g$ is either a basic classical Lie superalgebra or $\mathfrak{sl}(n,n)$, $n \ge 1$.  Then $(\g \otimes I) V = 0$ for some ideal $I$ of $A$ of finite codimension.
\end{cor}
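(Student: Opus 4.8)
The plan is to deduce this immediately from Theorem~\ref{thm:hw-quasifinite-classification}. First I would invoke Lemma~\ref{lem:fd-implies-hw}: since $V$ is an irreducible finite dimensional $(\g \otimes A)$-module, it is a highest weight module, so $V \cong V(\psi)$ for some $\psi \in (\h \otimes A)^*$. Next I would observe that a finite dimensional module is automatically quasifinite: it is a weight module (being highest weight), and each weight space $V_\lambda$, being a subspace of the finite dimensional space $V$, is finite dimensional. Hence condition~\eqref{item:V-quasifinite} of Theorem~\ref{thm:hw-quasifinite-classification} holds for $V = V(\psi)$.

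Then I would apply the implication \eqref{item:V-quasifinite} $\Rightarrow$ \eqref{item:I-annihilates-V} of Theorem~\ref{thm:hw-quasifinite-classification}, which produces an ideal $I$ of $A$ of finite codimension with $(\g \otimes I)V = 0$, exactly the assertion of the corollary. Note that no finite generation hypothesis on $A$ is required, since the equivalence of \eqref{item:V-quasifinite}, \eqref{item:I-annihilates-V}, and \eqref{item:psi-zero-on-I} in Theorem~\ref{thm:hw-quasifinite-classification} holds unconditionally; only the further equivalence with having finite support uses that $A$ is finitely generated.

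There is essentially no obstacle at the level of the corollary: all the real work has already been done in Theorem~\ref{thm:hw-quasifinite-classification}, whose crux is the construction, for each positive root $\alpha$, of the finite-codimension ideal $I_\alpha = \ker\big(A \to \Hom_\kk(V_\lambda \otimes \g_{-\alpha}, V_{\lambda-\alpha})\big)$ and the verification (using an element $h \in \h$ with $\alpha(h) \neq 0$) that $I_\alpha$ is genuinely an ideal, after which $I = \bigcap_\alpha I_\alpha$ does the job via Lemma~\ref{lem:annilating-ideal}.
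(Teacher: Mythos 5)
Your proof is correct and takes essentially the same route as the paper's, which simply observes that a finite dimensional module is clearly quasifinite and invokes the implication \eqref{item:V-quasifinite} $\Rightarrow$ \eqref{item:I-annihilates-V} of Theorem~\ref{thm:hw-quasifinite-classification}. Your explicit appeal to Lemma~\ref{lem:fd-implies-hw} to put $V$ in the form $V(\psi)$, and your remark that no finite generation hypothesis on $A$ is needed here, are both accurate and consistent with the paper.
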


\begin{proof}
  Since finite dimensional modules are clearly quasifinite, this follows immediately from Theorem~\ref{thm:hw-quasifinite-classification}.
\end{proof}

%
\section{Generalized evaluation modules} \label{sec:eval-rep}
%

In this section we assume that $\Gamma$ is a \emph{finite} group, acting on an affine scheme $X$ (see Remark~\ref{rem:reduction-to-affine}) and a \emph{finite dimensional} Lie superalgebra $\g$.  Furthermore, we assume that $A$ is finitely generated (hence Noetherian) and that $\Gamma$ acts \emph{freely} on $X$ (i.e.\ $\Gamma$ acts freely on $X_\rat = \maxSpec A$).

\begin{defin}[Generalized evaluation map] \label{def:gen-eval-map}
  Suppose $\sm_1,\dots,\sm_\ell \in X_\rat$ are pairwise distinct and $n_1,\dots,n_\ell \in \N_+$.  The associated \emph{generalized evaluation map} is the composition
  \begin{equation} \label{eq:gen-eval-map-def} \ts
    \ev_{\sm_1^{n_1},\dots,\sm_\ell^{n_\ell}} : \g \otimes A \twoheadrightarrow (\g \otimes A)/\left(\g \otimes \prod_{i=1}^\ell \sm_i^{n_i}\right) \cong \bigoplus_{i=1}^\ell \left( \g \otimes (A/\sm_i^{n_i}) \right).
  \end{equation}
  We let $\ev^\Gamma_{\sm_1^{n_1},\dots,\sm_\ell^{n_\ell}}$ denote the restriction of $\ev_{\sm_1^{n_1},\dots,\sm_\ell^{n_\ell}}$ to $(\g \otimes A)^\Gamma$.

  If $n_1=\dots=n_\ell=1$, then~\eqref{eq:gen-eval-map-def} is called an \emph{evaluation map}.  If $\sM = \{\sm_1,\dots,\sm_\ell\} \subseteq X_\rat$, we will sometimes write $\ev_\sM$ (resp.\ $\ev^\Gamma_\sM$) for $\ev_{\sm_1,\dots,\sm_\ell}$ (resp.\ $\ev^\Gamma_{\sm_1,\dots,\sm_\ell}$).
\end{defin}

\begin{defin}[Generalized evaluation representation] \label{def:gen-eval-rep}
  Suppose $\sm_1,\dots,\sm_\ell \in X_\rat$ are pairwise distinct, $n_1,\dots,n_\ell \in \N_+$, and, for $i=1,\dots,\ell$, $V_i$ is a finite dimensional $(\g \otimes (A/\sm^{n_i}))$-module with corresponding representation $\rho_i : \g \otimes (A/\sm^{n_i}) \to \End_\kk V_i$.  Then the composition
  \[ \ts
    \g \otimes A \xrightarrow{\ev_{\sm_1^{n_1},\dots,\sm_\ell^{n_\ell}}} \bigoplus_{i=1}^\ell \left( \g \otimes (A/\sm_i^{n_i}) \right) \xrightarrow{\bigotimes_{i=1}^\ell \rho_i} \End \left( \bigotimes_{i=1}^\ell V_i \right)
  \]
  is called a \emph{generalized evaluation representation} of $\g \otimes A$, and is denoted
  \[
    \ev_{\sm_1^{n_1},\dots,\sm_\ell^{n_\ell}} (\rho_1,\dots,\rho_\ell).
  \]
  The corresponding module is called a \emph{generalized evaluation module} and is denoted
  \[
    \ev_{\sm_1^{n_1},\dots,\sm_\ell^{n_\ell}} (V_1,\dots,V_\ell).
  \]
  We define $\ev^\Gamma_{\sm_1^{n_1},\dots,\sm_\ell^{n_\ell}} (\rho_1,\dots,\rho_\ell)$ to be the restriction of $\ev_{\sm_1^{n_1},\dots,\sm_\ell^{n_\ell}} (\rho_1,\dots,\rho_\ell)$ to $(\g \otimes A)^\Gamma$.  The notation $\ev^\Gamma_{\sm_1^{n_1},\dots,\sm_\ell^{n_\ell}} (V_1,\dots,V_\ell)$ is defined similarly (by restriction of the action).  These are also called \emph{(twisted) generalized evaluation representations} and \emph{(twisted) generalized evaluation modules} respectively.

  If $n_1=\dots=n_\ell=1$, then the above are called \emph{(twisted) evaluation representations} and \emph{(twisted) evaluation modules}, respectively.  If $\sM \subseteq X_\rat$ and $\rho_\sm : \g \to \End V_\sm$ is a finite dimensional representation for each $\sm \in \sM$, then $\ev_\sM (\rho_\sm)_{\sm \in \sM}$ and $\ev^\Gamma_\sM (\rho_\sm)_{\sm \in \sM}$ denote the obvious evaluation representations, where we have made the natural identification $\g \otimes (A/\sm) \cong \g \otimes \kk \cong \g$ for $\sm \in \sM$.
\end{defin}

\begin{rem}
  Note that in Definitions~\ref{def:gen-eval-map} and~\ref{def:gen-eval-rep} we allow for evaluation at more than one point.  In many places in the literature on multiloop algebras, the term \emph{evaluation representation} is reserved for the single point case.  One easily sees that the tensor product of generalized evaluation representations is again a generalized evaluation representation and that
  \[ \ts
    \ev_{\sm_1^{n_1},\dots,\sm_\ell^{n_\ell}}(\rho_1,\dots,\rho_\ell) = \bigotimes_{i=1}^\ell \ev_{\sm_i^{n_i}}(\rho_i).
  \]
  That is, generalized evaluation representations are tensor products of single point generalized evaluation representations.
\end{rem}

\begin{prop} \label{prop:gen-eval-finite-support}
  Suppose $\g$ is a reductive Lie algebra, a basic classical Lie superalgebra, or $\mathfrak{sl}(n,n)$, $n \ge 1$.  Then we have the following:
  \begin{enumerate}
    \item \label{prop-item:gen-eval-rep-iff-finite-support} An irreducible finite dimensional $(\g \otimes A)$-module is a generalized evaluation module if and only if it has finite support.
    \item \label{prop-item:eval-rep-iff-radical-annihilator} An irreducible finite dimensional $(\g \otimes A)$-module is an evaluation module if and only if it has finite reduced support.
  \end{enumerate}
\end{prop}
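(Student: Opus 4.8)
The plan is to prove both statements by relating the annihilator ideal of the module to the ideal $\prod_i \sm_i^{n_i}$ (respectively $\prod_i \sm_i$) used to define a generalized evaluation map, and then invoking the structure theory of quotients of $A$ by ideals of finite codimension together with the irreducibility/density results from Section~\ref{sec:quasifinite-hw}.

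For the forward direction of \eqref{prop-item:gen-eval-rep-iff-finite-support}, suppose $V = \ev_{\sm_1^{n_1},\dots,\sm_\ell^{n_\ell}}(V_1,\dots,V_\ell)$. Then $(\g \otimes \prod_i \sm_i^{n_i})V = 0$ by construction, so $\prod_i \sm_i^{n_i} \subseteq \Ann_A V$ (using Lemma~\ref{lem:g-perfect-annihilator}, since $\g$ is perfect). Since $\prod_i \sm_i^{n_i}$ has finite codimension (its support is the finite set $\{\sm_1,\dots,\sm_\ell\}$ and $A$ is finitely generated, so Lemma~\ref{lem:ideal-support-implies-codim} applies), $\Ann_A V$ has finite codimension and hence finite support by Lemma~\ref{lem:ideal-codim-implies-support}. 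For the converse, suppose $V$ is irreducible finite dimensional with finite support $\{\sm_1,\dots,\sm_\ell\}$. By Corollary~\ref{cor:finite-codim-annihilator}, $I := \Ann_A V$ has finite codimension; its support is $\{\sm_1,\dots,\sm_\ell\}$, and since $A$ is Noetherian, Lemma~\ref{lem:ideal-contains-power-of-radical} gives $\prod_i \sm_i^{n_i} \subseteq (\sqrt{I})^N \subseteq I$ for suitable $n_i$ (using $\sqrt{I} = \sm_1 \cap \dots \cap \sm_\ell$ and that pairwise-disjoint-support powers of the $\sm_i$ have intersection equal to product by Lemma~\ref{lem:ideal-product-intersection}). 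Thus $V$ factors through $\g \otimes (A/\prod_i \sm_i^{n_i}) \cong \bigoplus_i \g \otimes (A/\sm_i^{n_i})$, the last isomorphism again by disjointness of supports. Now $V$ is an irreducible finite dimensional module for this direct sum of superalgebras, so by (an iterated application of) Corollary~\ref{cor:irreds-for-tensor-prods}, $V \cong \bigotimes_i V_i$ with each $V_i$ an irreducible finite dimensional $(\g \otimes (A/\sm_i^{n_i}))$-module; this exhibits $V$ as a generalized evaluation module.

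For \eqref{prop-item:eval-rep-iff-radical-annihilator}, the argument is the same but with $n_1 = \dots = n_\ell = 1$. If $V$ is an evaluation module then $\prod_i \sm_i \subseteq \Ann_A V$ and $\Ann_A V$ has support $\{\sm_1,\dots,\sm_\ell\}$; one checks $\Ann_A V$ is radical because $V$ is already a module for the reduced algebra $A/\prod_i \sm_i \cong \bigoplus_i \kk$, forcing $\Ann_A V = \bigcap_i \sm_i$, which is radical — hence finite reduced support. Conversely, if $V$ has finite reduced support, then $\Ann_A V = \sm_1 \cap \dots \cap \sm_\ell = \prod_i \sm_i$ (Lemma~\ref{lem:ideal-product-intersection}), so $V$ factors through $\bigoplus_i (\g \otimes (A/\sm_i)) \cong \bigoplus_i \g$, and Corollary~\ref{cor:irreds-for-tensor-prods} again writes $V$ as a tensor product of irreducible finite dimensional $\g$-modules, i.e.\ an evaluation module.

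The main obstacle is the converse direction of part \eqref{prop-item:gen-eval-rep-iff-finite-support}: producing the correct exponents $n_i$ so that $\prod_i \sm_i^{n_i}$ lies inside $\Ann_A V$ while still containing a power of the radical, and then correctly identifying $A/\prod_i \sm_i^{n_i}$ with the direct sum $\bigoplus_i A/\sm_i^{n_i}$ so that the decomposition of $V$ as a tensor product of single-point pieces falls out of Corollary~\ref{cor:irreds-for-tensor-prods}. All of this rests on the commutative-algebra lemmas of Section~\ref{subsec:comm-alg} (finite codimension $\Leftrightarrow$ finite support, product $=$ intersection for disjoint supports, ideals contain powers of their radicals in the Noetherian setting) plus perfectness of $\g$; the representation-theoretic input — that an irreducible finite dimensional module over a finite direct sum of our superalgebras is an outer tensor product — is exactly Corollary~\ref{cor:irreds-for-tensor-prods}.
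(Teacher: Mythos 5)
Your overall strategy matches the paper's: relate the annihilator to an ideal of finite support, use Noetherianity via Lemma~\ref{lem:ideal-contains-power-of-radical} to find a power of the radical inside it, pass to $\g \otimes \bigl(A/\prod_i \sm_i^{n}\bigr) \cong \bigoplus_i \g \otimes (A/\sm_i^{n})$, and invoke Corollary~\ref{cor:irreds-for-tensor-prods} to split the module into single-point factors. A few slips, none fatal, are worth repairing. First, you invoke Lemma~\ref{lem:g-perfect-annihilator} ``since $\g$ is perfect,'' but a reductive Lie algebra with nontrivial center is \emph{not} perfect; fortunately the containment $\prod_i \sm_i^{n_i} \subseteq \Ann_A V$ needs neither that lemma nor perfectness --- it is immediate from Definition~\ref{def:annihilator}, since $\Ann_A V$ is the \emph{largest} such ideal. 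Second, Corollary~\ref{cor:finite-codim-annihilator} is only stated for $\g$ basic classical or $\mathfrak{sl}(n,n)$, so citing it in a proof covering reductive $\g$ is a scope mismatch; again you do not actually need it, because the hypothesis ``finite support'' already tells you $\rad(\Ann_A V)$ is a finite product of maximal ideals, after which Lemma~\ref{lem:ideal-contains-power-of-radical} does the work. Third, in the forward direction of \eqref{prop-item:eval-rep-iff-radical-annihilator} the equality $\Ann_A V = \bigcap_i \sm_i$ need not hold (if some $V_i$ is trivial, $\Ann_A V$ is strictly larger); the correct, and still sufficient, observation is that any ideal of $A$ containing $\prod_i \sm_i$ corresponds to an ideal of the reduced ring $A/\prod_i \sm_i \cong \kk^\ell$ and is therefore automatically radical. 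With these adjustments the argument is correct and essentially identical to the one in the paper.
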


\begin{proof}
  We work in the equivalent language of representations (instead of modules).  Let $\rho$ be an irreducible representation of $\g \otimes A$.  If $\rho$ is a generalized evaluation representation then we have $\rho = \ev_{\sm_1^{n_1},\dots,\sm_\ell^{n_\ell}} (\rho_1,\dots,\rho_\ell)$ for some pairwise distinct $\sm_1,\dots,\sm_\ell \in X_\rat$, some $n_1,\dots,n_\ell \in \N_+$, and some representations $\rho_1,\dots,\rho_\ell$.  Then $I = \prod_{i=1}^\ell \sm_i^{n_i}$ has finite support and $\rho(\g \otimes I)=0$.  Thus $\rho$ has finite support (see Definition~\ref{def:support}).  Furthermore, if $\rho$ is an evaluation representation, then we can take $n_1=\dots=n_\ell=1$ above and $\prod_{i=1}^\ell \sm_i$ is a radical ideal.  Thus we have proved the forward implications in both~\eqref{prop-item:gen-eval-rep-iff-finite-support} and~\eqref{prop-item:eval-rep-iff-radical-annihilator}.

  Now assume $\rho(\g \otimes I)=0$ for some ideal $I$ of $A$ of finite support.  Thus $\rad I = \sm_1 \cdots \sm_\ell$ for some distinct maximal ideals $\sm_1,\dots,\sm_\ell$ of $A$.  Since $A$ is Noetherian, by Lemma~\ref{lem:ideal-contains-power-of-radical} we have that $\prod_{i=1}^\ell \sm_i^n \subseteq I$ for some $n \in \N$.  Then $\rho$ factors through the map
  \[ \ts
    \g \otimes A \twoheadrightarrow (\g \otimes A)/(\g \otimes \prod_{i=1}^\ell \sm_i^n) \cong \bigoplus_{i=1}^\ell (\g \otimes (A/\sm_i^n)).
  \]
  Then, by Corollary~\ref{cor:irreds-for-tensor-prods}, there exist representations $\rho_i$ of $\g \otimes (A/\sm_i^n)$, $i=1,\dots,\ell$, such that $\rho = \ev_{\sm_1^n,\dots,\sm_\ell^n}(\rho_1,\dots,\rho_\ell)$.  Thus $\rho$ is a generalized evaluation representation.  Furthermore, if $(\g \otimes I)V=0$ for some radical ideal, then we can take $n_1=\dots=n_\ell=1$ above and $\rho$ is an evaluation representation.  This completes the proof of the reverse implications in~\eqref{prop-item:gen-eval-rep-iff-finite-support} and~\eqref{prop-item:eval-rep-iff-radical-annihilator}.
\end{proof}

\begin{defin}[The set $X_*$]
  We let $X_*$ denote the set of finite subsets $\sM \subseteq X_\rat$ having the property that $\sm' \not \in \Gamma \sm$ for distinct $\sm,\sm' \in \sM$.
\end{defin}

\begin{lem} \label{lem:ev-image}
  For $\{\sm_1,\dots,\sm_\ell\} \in X_*$ and $n_1,\dots,n_\ell \in \N_+$, the map $\ev^\Gamma_{\sm_1^{n_1},\dots,\sm_\ell^{n_\ell}}$ is surjective.
\end{lem}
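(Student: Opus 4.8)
The plan is to prove surjectivity by reducing to a statement about the coordinate ring $A$ and then invoking the freeness of the $\Gamma$-action to split $A$ into a product of $\Gamma$-orbit pieces. First I would observe that since $\{\sm_1,\dots,\sm_\ell\} \in X_*$, the maximal ideals $\sm_1,\dots,\sm_\ell$ lie in distinct $\Gamma$-orbits; because $\Gamma$ acts freely, the full $\Gamma$-orbit of $\sm_i$ consists of $|\Gamma|$ pairwise distinct maximal ideals, and the orbits of $\sm_1,\dots,\sm_\ell$ are mutually disjoint. Consequently the ideals $\gamma \sm_i^{n_i}$, as $\gamma$ ranges over $\Gamma$ and $i$ over $1,\dots,\ell$, are pairwise comaximal, so by the Chinese Remainder Theorem
\[ \ts
  A / \left( \prod_{i=1}^\ell \prod_{\gamma \in \Gamma} \gamma\, \sm_i^{n_i} \right) \;\cong\; \bigoplus_{i=1}^\ell \bigoplus_{\gamma \in \Gamma} A/(\gamma\, \sm_i^{n_i}).
\]
Let $I = \prod_{i=1}^\ell \prod_{\gamma \in \Gamma} \gamma\, \sm_i^{n_i}$; this is a $\Gamma$-invariant ideal of finite codimension, and the decomposition above is $\Gamma$-equivariant, where $\Gamma$ permutes the summands according to its action on orbits (and acts on the $A/(\gamma\sm_i^{n_i})$ compatibly). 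Tensoring with $\g$ and taking $\Gamma$-fixed points, I would get
\[ \ts
  \left( \g \otimes (A/I) \right)^\Gamma \;\cong\; \bigoplus_{i=1}^\ell \left( \g \otimes \textstyle\bigoplus_{\gamma \in \Gamma} A/(\gamma\,\sm_i^{n_i}) \right)^\Gamma,
\]
since $\Gamma$ preserves each block $\bigoplus_{\gamma} A/(\gamma\sm_i^{n_i})$ (the orbit of $\sm_i$).

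The next step is to identify each $\Gamma$-fixed block with $\g \otimes (A/\sm_i^{n_i})$. Fix $i$ and let $\Gamma_i = \Stab_\Gamma(\sm_i)$; by freeness $\Gamma_i$ is trivial, so $\Gamma$ acts simply transitively on the orbit $\Gamma \sm_i$. Then $\g \otimes \bigoplus_{\gamma \in \Gamma} A/(\gamma \sm_i^{n_i})$ is the "induced" object $\Ind_{\{e\}}^\Gamma (\g \otimes A/\sm_i^{n_i})$, and a standard averaging/projection argument (evaluate a $\Gamma$-fixed element at the component indexed by $e$) gives an isomorphism of the fixed-point space onto $\g \otimes (A/\sm_i^{n_i})$. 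Concretely, an element of the block is a tuple $(x_\gamma)_{\gamma \in \Gamma}$ with $x_\gamma \in \g \otimes A/(\gamma \sm_i^{n_i})$, it is $\Gamma$-fixed iff $x_{\gamma} = \gamma \cdot x_e$ for all $\gamma$, and the map $(x_\gamma) \mapsto x_e$ is the desired isomorphism; its inverse sends $y \in \g \otimes A/\sm_i^{n_i}$ to the fixed tuple $(\gamma \cdot y)_\gamma$. Chaining these identifications, $(\g \otimes A/I)^\Gamma \cong \bigoplus_{i=1}^\ell \g \otimes (A/\sm_i^{n_i})$, and unwinding the definition of $\ev^\Gamma_{\sm_1^{n_1},\dots,\sm_\ell^{n_\ell}}$ in~\eqref{eq:gen-eval-map-def} shows this isomorphism is exactly $\ev^\Gamma_{\sm_1^{n_1},\dots,\sm_\ell^{n_\ell}}$ followed by a change of coordinates. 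In particular $\ev^\Gamma_{\sm_1^{n_1},\dots,\sm_\ell^{n_\ell}}$ is surjective onto $\bigoplus_{i=1}^\ell \g \otimes (A/\sm_i^{n_i})$.

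The main obstacle, I expect, is making the bookkeeping of the $\Gamma$-action on the CRT decomposition fully precise: one must check that the summands $A/(\gamma \sm_i^{n_i})$ really are permuted by $\Gamma$ in the way claimed (i.e.\ $\gamma' \cdot$ maps the $\gamma$-component isomorphically to the $\gamma'\gamma$-component), and that taking $\Gamma$-fixed points commutes with the direct sum and with $\g \otimes (-)$ — the latter is harmless because $\Gamma$ is finite and we are over a field of characteristic zero, so $(-)^\Gamma$ is exact and commutes with the (finite) direct sums and with $\g \otimes (-)$. A secondary point is that one should phrase the argument so that it does not secretly require $\Gamma$ to be abelian (only finiteness and freeness are used here), since the abelian hypothesis is imposed globally but is not needed for this lemma. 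Once the equivariant CRT isomorphism is set up carefully, surjectivity of $\ev^\Gamma$ is immediate, so the real content is entirely in the first two paragraphs.
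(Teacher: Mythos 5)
Your proposal is correct and follows essentially the same route as the paper: form the $\Gamma$-equivariant surjection $\g \otimes A \twoheadrightarrow \bigoplus_{\gamma \in \Gamma}\bigoplus_{i=1}^\ell \g \otimes (A/(\gamma\sm_i)^{n_i})$ via the Chinese Remainder Theorem, then pass to $\Gamma$-fixed points and identify the fixed subspace with $\bigoplus_i \g \otimes (A/\sm_i^{n_i})$ because $\Gamma$ freely permutes the summands. You are somewhat more explicit than the paper on two points the paper leaves implicit: that $(-)^\Gamma$ preserves surjectivity (which holds since $\Gamma$ is finite and $\operatorname{char}\kk = 0$, via the averaging projection), and the concrete description of a $\Gamma$-fixed tuple as determined by its component at the identity; your remark that the argument uses only finiteness and freeness of $\Gamma$, not commutativity, is also accurate.
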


\begin{proof}
  Let $\{\sm_1,\dots,\sm_\ell\} \in X_*$ and $n_1,\dots,n_\ell \in \N_+$.  Then we have the natural surjection
  \[ \ts
    \g \otimes A \twoheadrightarrow \g \otimes A/\left( \prod_{\gamma \in \Gamma,\, 1 \le i \le \ell} \gamma \sm_i^{n_i} \right) \cong \bigoplus_{\gamma \in \Gamma} \bigoplus_{i=1}^\ell \g \otimes (A/(\gamma \sm_i)^{n_i}).
  \]
  Since the right hand side is $\Gamma$-invariant, we can restrict to $\Gamma$-fixed points to obtain a surjection
  \[ \ts
    (\g \otimes A)^\Gamma \twoheadrightarrow \left( \bigoplus_{\gamma \in \Gamma} \bigoplus_{i=1}^\ell \g \otimes (A/(\gamma \sm_i)^{n_i}) \right)^\Gamma \cong \bigoplus_{i=1}^\ell \g \otimes (A/\sm_i^{n_i}),
  \]
  where, in the isomorphism, we have used the fact that $\Gamma$ permutes the summands in the summation over $\Gamma$ in the middle expression.  Since the above composition is simply the map $\ev^\Gamma_{\sm_1^{n_1},\dots,\sm_\ell^{n_\ell}}$, the proof is complete.
\end{proof}

Note that Lemma~\ref{lem:ev-image} implies that, for any finite subset $\sM \subseteq X_\rat$, the map $\ev_\sM$ is surjective.

\begin{rem}
  In \cite[Prop~4.9]{NSS09} it was shown that if $\g$ is a Lie algebra, $\sM \in X_*$, and $\rho_\sm : \g \to \End V_\sm$ is an irreducible finite dimensional representation for each $\sm \in \sM$, then the evaluation representation $\ev^\Gamma_\sM(\rho_\sm)_{\sm \in \sM}$ is an irreducible finite dimensional representation of $(\g \otimes A)^\Gamma$.  However, this is no longer true in the setting of arbitrary Lie superalgebras.  This is because the (outer) tensor product of irreducible representations may be reducible in general (see \cite[\S8]{Che95} and \cite{Joz88}).  Thus, a possible alternate definition of (multiple point) evaluation representation in the super setting is a representation that factors through an evaluation map $\ev_\sM$.  However, Proposition~\ref{prop:tensor-product} implies that for the purposes of our classification (where we will assume that $\g$ is either a basic classical Lie superalgebra or $\mathfrak{sl}(n,n)$, $n \ge 1$), the two definitions are equivalent.
\end{rem}

We would like to give a natural enumeration of the isomorphism classes of evaluation representations of $(\g \otimes A)^\Gamma$.  Let $\cR(\g)$ denote the set of isomorphism classes of irreducible finite dimensional representations of $\g$. Then $\Gamma$ acts on $\cR(\g)$ by
\begin{equation} \label{eq:Gamma-action-on-R}
  \Gamma \times \cR(\g) \to \cR(\g),\quad (\gamma,[\rho]) \mapsto \gamma [\rho] := [\rho \circ \gamma^{-1}],
\end{equation}
where $[\rho] \in \cR$ denotes the isomorphism class of a representation $\rho$ of $\g$.

\begin{defin}[The sets $\cE(X,\g), \cE(X,\g)^\Gamma$]
  Let $\cE(X,\g)$ denote the set of finitely supported functions $\Psi : X_\rat \to \cR(\g)$ and let $\cE(X,\g)^\Gamma$ denote the subset of $\cE(X,\g)$ consisting of those functions that are $\Gamma$-equivariant.  Here the support $\Supp \Psi$ of $\Psi \in \cE(X,\g)$ is the set of all $\sm \in X_\rat$ for which $\Psi(\sm) \ne 0$, where $0$ denotes the isomorphism class of the trivial representation.
\end{defin}

For isomorphic representations $\rho$ and $\rho'$ of $\g$, the evaluation representations $\ev_\sm \rho$ and $\ev_\sm \rho'$ are isomorphic. Therefore, for $[\rho] \in \cR(\g)$, we can define $\ev_\sm [\rho]$ to be the isomorphism class of $\ev_\sm \rho$ and this is independent of the representative $\rho$. Similarly, for a finite
subset $\sM \subseteq X_\rat$ and representations $\rho_\sm$ of $\g$ for $\sm \in \sM$, we define $\ev_\sM ([\rho_\sm])_{\sm \in \sM}$ to be the isomorphism class of $\ev_\sM (\rho_\sm)_{\sm \in \sM}$.

\begin{lem} \label{lem:twisted-eval-invariance}
Suppose $\Psi \in \cE(X,\g)^\Gamma$ and $\sm \in X_\rat$.  Then, for all $\gamma \in \Gamma$,
\[
  \ev_\sm \Psi (\sm) = \ev_{\gamma  \sm} \left( \gamma \Psi(\sm) \right) = \ev_{\gamma  \sm} \Psi(\gamma  \sm).
\]
\end{lem}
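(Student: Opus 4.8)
The plan is to extract from the $\Gamma$-equivariance of the evaluation maps a single clean identity and then let both equalities fall out of it. The second equality, $\ev_{\gamma\sm}(\gamma\Psi(\sm)) = \ev_{\gamma\sm}\Psi(\gamma\sm)$, requires nothing: by definition of $\cE(X,\g)^\Gamma$ the map $\Psi$ is $\Gamma$-equivariant, so $\Psi(\gamma\sm) = \gamma\Psi(\sm)$ and the two sides agree on the nose. All of the content is in the first equality, and for that I would fix a representative $\rho\colon\g\to\End_\kk V$ of the class $\Psi(\sm)\in\cR(\g)$; by \eqref{eq:Gamma-action-on-R} we then have $\gamma\Psi(\sm) = [\rho\circ\gamma^{-1}]$, where $\gamma\colon\g\to\g$ is the given automorphism. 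So it suffices to show that $\ev_\sm\rho$ and $\ev_{\gamma\sm}(\rho\circ\gamma^{-1})$, as representations of $(\g\otimes A)^\Gamma$, lie in the same isomorphism class.

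The key step is to record how the evaluation maps at $\sm$ and at $\gamma\sm$ are related. Since $\gamma$ is a Lie superalgebra automorphism of $\g$ and a $\kk$-algebra automorphism of $A$ carrying the maximal ideal $\sm$ onto $\gamma\sm$, the automorphism $\gamma$ of $\g\otimes A$ restricts to an isomorphism $\g\otimes\sm \xrightarrow{\ \sim\ }\g\otimes\gamma\sm$, and the residue-field identifications $A/\sm\cong\kk\cong A/\gamma\sm$ are $\gamma$-equivariant because $\gamma$ fixes $\kk$ pointwise. Consequently the square with top and bottom arrows $\ev_\sm,\ev_{\gamma\sm}\colon\g\otimes A\to\g$ and both vertical arrows equal to $\gamma$ commutes; equivalently,
\[
  \ev_{\gamma\sm} \;=\; \gamma\circ\ev_\sm\circ\gamma^{-1}
\]
as maps $\g\otimes A\to\g$.

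It then remains to restrict this identity to $(\g\otimes A)^\Gamma$, on which $\gamma$ acts as the identity. For $\mu\in(\g\otimes A)^\Gamma$ we get $\ev_{\gamma\sm}(\mu) = \gamma\big(\ev_\sm(\gamma^{-1}\mu)\big) = \gamma(\ev_\sm\mu)$, hence
\[
  (\rho\circ\gamma^{-1})\big(\ev_{\gamma\sm}\mu\big) \;=\; \rho\big(\gamma^{-1}\gamma(\ev_\sm\mu)\big) \;=\; \rho(\ev_\sm\mu) \qquad\text{for all }\mu\in(\g\otimes A)^\Gamma .
\]
Thus $\ev_{\gamma\sm}(\rho\circ\gamma^{-1})$ and $\ev_\sm\rho$ are not merely isomorphic but literally the same representation of $(\g\otimes A)^\Gamma$, which proves the first equality and finishes the lemma. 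The argument is essentially formal; the only thing demanding a bit of attention is the bookkeeping in the equivariance identity $\ev_{\gamma\sm} = \gamma\circ\ev_\sm\circ\gamma^{-1}$ — in particular, using the compatibility of the $\Gamma$-actions on $X$ and on $A$ in the correct direction — together with the observation that one must pass to the fixed subalgebra $(\g\otimes A)^\Gamma$, on which $\gamma$ becomes trivial, for the twisting automorphism to cancel; on all of $\g\otimes A$ the analogous statement would fail. That identity is the one step I would write out in full.
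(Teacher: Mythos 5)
Your proof is correct and takes the same approach as the proof the paper cites ([NSS09, Lem.\ 4.13]): the equivariance identity $\ev_{\gamma\sm} = \gamma \circ \ev_\sm \circ \gamma^{-1}$, restricted to $(\g\otimes A)^\Gamma$ where $\gamma$ acts as the identity, immediately gives that $\ev^\Gamma_{\gamma\sm}(\rho\circ\gamma^{-1}) = \ev^\Gamma_\sm(\rho)$ on the nose. You also correctly read $\ev_\sm$ in the statement as its restriction $\ev^\Gamma_\sm$ to the fixed subalgebra, which is the intended reading given that the lemma is used to show $\ev^\Gamma_\Psi$ is well-defined.
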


\begin{proof}
  The proof is the same as that of \cite[Lem.~4.13]{NSS09}, which considers the case when $\g$ is a Lie algebra.
\end{proof}

\begin{defin}[The class $\ev^\Gamma_\Psi$]
  For $\Psi \in \cE(X,\g)^\Gamma$, we define $\ev^\Gamma_\Psi = \ev^\Gamma_\sM (\Psi(\sm))_{\sm \in \sM}$ where $\sM \in X_*$ contains one element of each $\Gamma$-orbit in $\Supp \Psi$.  By Lemma~\ref{lem:twisted-eval-invariance}, $\ev^\Gamma_\Psi$ is independent of the choice of $\sM$.  If $\Psi$ is the map that is identically 0 on $X$, we define $\ev^\Gamma_\Psi$ to be the isomorphism class of the trivial representation of $(\g \otimes A)^\Gamma$.  If $\Gamma$ is the trivial group, we often omit the superscript $\Gamma$.
\end{defin}

\begin{prop}
  The map $\Psi \mapsto \ev^\Gamma_\Psi$ from $\cE(X,\g)^\Gamma$ to the set of isomorphism classes of evaluation representations of $(\g \otimes A)^\Gamma$ is injective.
\end{prop}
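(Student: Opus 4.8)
The plan is to show injectivity by reconstructing $\Psi$ from the isomorphism class $\ev^\Gamma_\Psi$. The key invariants to extract are: first the support of $\Psi$ (as a $\Gamma$-equivariant subset of $X_\rat$), and then, at each point of the support, the isomorphism class of the representation of $\g$ assigned by $\Psi$. Suppose $\Psi_1, \Psi_2 \in \cE(X,\g)^\Gamma$ satisfy $\ev^\Gamma_{\Psi_1} \cong \ev^\Gamma_{\Psi_2}$; I want to conclude $\Psi_1 = \Psi_2$. Pick $\sM_j \in X_*$ containing one element of each $\Gamma$-orbit in $\Supp \Psi_j$, so that $\ev^\Gamma_{\Psi_j} = \ev^\Gamma_{\sM_j}(\Psi_j(\sm))_{\sm \in \sM_j}$.

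\textbf{Step 1: support recovery.} First I would identify $\Supp_A$ of the evaluation module $\ev^\Gamma_\Psi$ with $\Gamma \cdot \Supp \Psi$. Concretely, $\ev^\Gamma_{\sM}(\Psi(\sm))_{\sm \in \sM}$ factors through $\ev^\Gamma_{\sM'}$ where $\sM' = \{\sm \in \sM : \Psi(\sm) \ne 0\}$, and one checks (using Lemma~\ref{lem:ev-image} surjectivity of $\ev^\Gamma_{\sM'}$, that the tensor factors are nontrivial, and that $\g$ is perfect so that Lemma~\ref{lem:g-perfect-annihilator} applies) that $\Ann_A$ of this module is the $\Gamma$-invariant ideal $\prod_{\gamma \in \Gamma,\ \sm \in \sM'} \gamma \sm$, whose support is exactly the $\Gamma$-orbit of $\Supp \Psi$. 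Hence $\ev^\Gamma_{\Psi_1} \cong \ev^\Gamma_{\Psi_2}$ forces $\Gamma \cdot \Supp \Psi_1 = \Gamma \cdot \Supp \Psi_2$; since both $\Psi_j$ are $\Gamma$-equivariant, this gives $\Supp \Psi_1 = \Supp \Psi_2$, and after re-choosing we may take $\sM_1 = \sM_2 =: \sM$ with $\Psi_1, \Psi_2$ both nonvanishing on all of $\sM$.

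\textbf{Step 2: recovering the local representations.} Now $\ev_\sM(\Psi_1(\sm))_{\sm \in \sM}$ and $\ev_\sM(\Psi_2(\sm))_{\sm \in \sM}$ are isomorphic as modules for $(\g \otimes A)^\Gamma$, both obtained by pulling back modules along the surjection $(\g\otimes A)^\Gamma \twoheadrightarrow \bigoplus_{\sm \in \sM} \g$ given by $\ev^\Gamma_\sM$. Two modules pulled back along a surjection of algebras are isomorphic as modules for the source iff the corresponding modules for the quotient are isomorphic; hence $\bigotimes_{\sm \in \sM}\Psi_1(\sm) \cong \bigotimes_{\sm \in \sM}\Psi_2(\sm)$ as modules for $\bigoplus_{\sm \in \sM}\g$. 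Since $\g$ is a basic classical Lie superalgebra or $\fsl(n,n)$, Lemma~\ref{lem:outer-tensor-irred} (with $A^i = \kk$) guarantees these outer tensor products are irreducible, so by Corollary~\ref{cor:irreds-for-tensor-prods} (applied iteratively, or by a direct argument via weights / central characters on each summand) the tensor decomposition is unique, giving $\Psi_1(\sm) \cong \Psi_2(\sm)$ for each $\sm \in \sM$. By $\Gamma$-equivariance this propagates over each $\Gamma$-orbit, so $\Psi_1 = \Psi_2$.

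\textbf{Main obstacle.} The delicate point is Step 2's uniqueness of the outer tensor factorization: over a superalgebra an irreducible module need not be an outer tensor product of irreducibles, and even when it is the factors need not be unique in general. What saves us is precisely the restriction on $\g$ (basic classical or $\fsl(n,n)$), which makes the relevant endomorphism algebras $\kk$ (Lemma~\ref{lem:Schur}) and the outer tensor products irreducible (Lemma~\ref{lem:outer-tensor-irred}); I would isolate the factor $\Psi_j(\sm)$ by restricting to the subalgebra $\g$ embedded in the $\sm$-th summand and using that its action on the tensor product determines $\Psi_j(\sm)$ up to isomorphism (this is where one invokes, for instance, that the highest weight with respect to a chosen triangular decomposition, together with the $\g$-module structure, is recovered from the restriction). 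The reduction in Step 1 that lets us assume $\sM_1 = \sM_2$ is the other place requiring care, since a priori the two $\Psi_j$ could be supported on different representatives of the same orbits — but equivariance handles this cleanly.
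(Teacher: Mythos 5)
Your overall plan --- first recover $\Supp \Psi$ (up to the $\Gamma$-orbit, then pin it down by equivariance), then recover the local classes $\Psi(\sm)$ by comparing the two pullbacks along the surjection $\ev^\Gamma_\sM$ --- is exactly the NSS09 strategy that the paper cites for this proposition, and your ``Main obstacle'' paragraph correctly identifies the point that needs care. So the proposal is essentially the paper's own argument, fleshed out.

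Two remarks on the details. First, the proposition lives under Section~4's running hypotheses, where $\g$ is merely a finite dimensional Lie superalgebra; it is not assumed basic classical, $\fsl(n,n)$, or perfect. Your Step~1 invokes $\g$ perfect to use Lemma~\ref{lem:g-perfect-annihilator}, and your Step~2 invokes Lemma~\ref{lem:outer-tensor-irred} and Corollary~\ref{cor:irreds-for-tensor-prods}, all of which carry those assumptions. Neither is needed: for Step~1, compute $\Ann_A$ directly from Definition~\ref{def:annihilator} (if some $\Gamma$-orbit were omitted from the support, surjectivity of $\ev^\Gamma$ onto the corresponding copy of $\g$, together with nontriviality of $\Psi(\sm)$, already produces a nonzero action), and for Step~2, the restriction-to-the-$\sm$-th-summand argument you sketch in the ``Main obstacle'' paragraph works for arbitrary $\g$ and does not require the outer tensor product to be irreducible, only that it pull back faithfully from $\bigoplus_\sm \g$. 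Second, there is a genuine super subtlety your argument (and the paper, by deferring to a non-super reference) glosses over: for $\g \oplus \g$-modules one has $\Pi V_1 \otimes \Pi V_2 \cong V_1 \otimes V_2$, so restriction to a single summand only determines $\Psi_j(\sm)$ up to a global parity shift of the family, and if $\cR(\g)$ distinguishes $V$ from $\Pi V$ this could in principle break injectivity. This is resolved by the usual convention (e.g.\ highest weight vectors taken even, or identifying $V$ with $\Pi V$ in $\cR(\g)$), but it is worth making explicit; your highest-weight remark at the end of the ``Main obstacle'' paragraph is pointing in exactly this direction and should be promoted to the main line of the argument.
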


\begin{proof}
  The proof is the same as that of \cite[Prop.~4.15]{NSS09}, which considers the case when $\g$ is a Lie algebra.
\end{proof}

\begin{prop} \label{prop:E-enumerates}
  If $\g$ is a reductive Lie algebra, a basic classical Lie superalgebra, or $\mathfrak{sl}(n,n)$, $n \ge 1$, then $\ev^\Gamma_\Psi$ is an irreducible representation for all $\Psi \in \cE(X,\g)^\Gamma$.  Hence the map $\Psi \mapsto \ev^\Gamma_\Psi$ is a bijection from $\cE(X,\g)^\Gamma$ to the set of isomorphism classes of irreducible evaluation representations of $(\g \otimes A)^\Gamma$.
\end{prop}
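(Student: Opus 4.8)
The plan is to prove the irreducibility of $\ev^\Gamma_\Psi$ first and then deduce bijectivity using the injectivity from the preceding proposition. For irreducibility, fix $\Psi\in\cE(X,\g)^\Gamma$. If $\Psi$ is identically zero then $\ev^\Gamma_\Psi$ is the one-dimensional trivial representation and there is nothing to prove, so assume $\Psi\ne 0$ and choose $\sM=\{\sm_1,\dots,\sm_\ell\}\in X_*$ with one element in each $\Gamma$-orbit of $\Supp\Psi$, and for each $i$ an irreducible finite dimensional $\g$-module $V_i$ in the class $\Psi(\sm_i)$. By definition, $\ev^\Gamma_\Psi$ is the restriction to $(\g\otimes A)^\Gamma$ of the $(\g\otimes A)$-module $\ev_\sM(V_1,\dots,V_\ell)=V_1\otimes\cdots\otimes V_\ell$. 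Regarding each $V_i$ as a $(\g\otimes A)$-module through the surjection $\ev_{\sm_i}$, it is irreducible and finite dimensional; since $V_i$ is nonzero and $\g\otimes\sm_i$ annihilates it, its annihilator ideal is the maximal ideal $\sm_i$, so $\Supp_A V_i=\{\sm_i\}$, and these supports are pairwise disjoint. Thus $V_1\otimes\cdots\otimes V_\ell$ is irreducible as a $(\g\otimes A)$-module by Corollary~\ref{cor:prod-fd-irr-is-irr}. To transfer this to $(\g\otimes A)^\Gamma$, observe that both the $(\g\otimes A)$-action (through $\ev_\sM$) and the $(\g\otimes A)^\Gamma$-action (through $\ev^\Gamma_\sM$) on $V_1\otimes\cdots\otimes V_\ell$ factor through $\bigoplus_{i=1}^\ell \g\otimes(A/\sm_i)\cong\bigoplus_{i=1}^\ell\g$, and $\ev^\Gamma_\sM$ is surjective onto this by Lemma~\ref{lem:ev-image}; hence $(\g\otimes A)^\Gamma$ and $\g\otimes A$ have the same image in $\End_\kk(V_1\otimes\cdots\otimes V_\ell)$, so the same invariant subspaces, and $\ev^\Gamma_\Psi$ is irreducible.

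For bijectivity, injectivity is the preceding proposition and the image consists of classes of irreducible evaluation representations by the first part, so it remains to show every irreducible evaluation representation arises. Let $\rho\cong\ev^\Gamma_\sM(\tau_\sm)_{\sm\in\sM}$ be irreducible, with $\sM\subseteq X_\rat$ finite and $\tau_\sm$ finite dimensional $\g$-modules; discard points with $\tau_\sm$ trivial so that each $\tau_\sm$ is nontrivial. If $\sm'=\gamma\sm$ for $\sm,\sm'\in\sM$ and $\gamma\in\Gamma$ (with $\gamma$ unique, as $\Gamma$ acts freely), then $\mu(\sm')=\gamma(\mu(\sm))$ for all $\mu\in(\g\otimes A)^\Gamma$, so the factors at $\sm$ and $\sm'$ merge into a single factor $\tau_\sm\otimes(\tau_{\sm'}\circ\gamma)$ at $\sm$; iterating, we may assume $\sM\in X_*$. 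Then $\ev^\Gamma_\sM$ is surjective onto $\bigoplus_{\sm\in\sM}\g\cong\g\otimes\kk^{|\sM|}$ by Lemma~\ref{lem:ev-image}, so $\rho$ descends to an irreducible finite dimensional module for this algebra; applying Corollary~\ref{cor:irreds-for-tensor-prods} repeatedly, peeling off one copy of $\g$ at a time, identifies it with an outer tensor product $\bigotimes_{\sm\in\sM}W_\sm$ of irreducible finite dimensional $\g$-modules $W_\sm$. Discarding once more the factors on which $\g$ acts trivially, we obtain $\rho\cong\ev^\Gamma_\sM([W_\sm])_{\sm\in\sM}$ with $\sM\in X_*$ and all $W_\sm$ nontrivial. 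Defining $\Psi(\gamma\sm)=\gamma[W_\sm]$ for $\sm\in\sM$, $\gamma\in\Gamma$, and $\Psi=0$ elsewhere, we get a well-defined, $\Gamma$-equivariant, finitely supported map (because $\Gamma$ is finite, acts freely, and $\sM$ meets each $\Gamma$-orbit at most once), i.e.\ $\Psi\in\cE(X,\g)^\Gamma$, and $\ev^\Gamma_\Psi=\ev^\Gamma_\sM([W_\sm])_{\sm\in\sM}\cong\rho$.

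The main obstacle is the transfer step in the irreducibility argument: restriction of an irreducible module to a subalgebra is not irreducible in general, and it is precisely the surjectivity of $\ev^\Gamma_\sM$ from Lemma~\ref{lem:ev-image} --- hence the standing assumption that $\Gamma$ acts freely on $X_\rat$ --- that makes it go through. A secondary point requiring care is the reduction to $\sM\in X_*$ in the surjectivity half, namely verifying that merging the factors at points of a common $\Gamma$-orbit, and dropping trivial factors, keeps us within the class of evaluation representations.
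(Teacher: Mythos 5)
Your proof is correct and follows the same route as the paper, whose entire proof of this proposition is the single line ``This follows immediately from Corollary~\ref{cor:prod-fd-irr-is-irr}.'' You have simply spelled out the steps the paper leaves implicit: the transfer of irreducibility from the $(\g \otimes A)$-module to its restriction, which rests on the surjectivity of $\ev^\Gamma_\sM$ from Lemma~\ref{lem:ev-image}, and the surjectivity of the enumeration onto irreducible evaluation representations, obtained by first merging factors within a $\Gamma$-orbit to reduce to $\sM \in X_*$ and then applying Corollary~\ref{cor:irreds-for-tensor-prods}.
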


\begin{proof}
  This follows immediately from Corollary~\ref{cor:prod-fd-irr-is-irr}.
\end{proof}

We conclude this section by giving a natural enumeration of the generalized evaluation representations when $\g$ is a reductive Lie algebra.  Suppose $\fl^\ab$ is a finite dimensional abelian Lie algebra.

\begin{defin}[The spaces $\cL(X,\fl^\ab)$, $\cL_\sm(X,\fl^\ab)$]
  Let $\cL(X,\fl^\ab)$ be the space of linear forms on $\fl^\ab \otimes A$ with finite support.  Equivalently,
  \[
    \cL(X,\fl^\ab) := \{\theta \in (\fl^\ab \otimes A)^*\ |\ \theta(\fl^\ab \otimes I)=0 \text{ for some ideal $I$ of $A$ with finite support}\}.
  \]
  For $\sm \in \maxSpec A$, let
  \[
    \cL_\sm(X,\fl^\ab) := \{\theta \in (\fl^\ab \otimes A)^* \ |\ \theta(\fl^\ab \otimes \sm^\ell) = 0 \text{ for } \ell \gg 0\}
  \]
  be the linear subspace of $(\fl \otimes A)^*$ consisting of those forms vanishing on $\fl^\ab \otimes \sm^\ell$ for some sufficiently large $\ell$ (equivalently, forms with support contained in $\{\sm\}$).
\end{defin}

\begin{lem} \label{lem:cL-sum-decomp}
  We have $\bigoplus_{\sm \in X_\rat} \cL_\sm(X,\fl^\ab) \cong \cL(X,\fl^\ab)$.
\end{lem}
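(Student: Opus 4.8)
The plan is to exhibit an explicit isomorphism between the direct sum $\bigoplus_{\sm \in X_\rat} \cL_\sm(X,\fl^\ab)$ and $\cL(X,\fl^\ab)$. The natural candidate map sends a family $(\theta_\sm)_{\sm \in X_\rat}$, with only finitely many $\theta_\sm \ne 0$, to the sum $\sum_\sm \theta_\sm \in (\fl^\ab \otimes A)^*$. First I would check this is well-defined: each $\theta_\sm$ has support contained in $\{\sm\}$ (equivalently $\theta_\sm$ vanishes on $\fl^\ab \otimes \sm^{\ell_\sm}$ for some $\ell_\sm$), so if $\sm_1, \dots, \sm_r$ are the finitely many points where $\theta_{\sm_i} \ne 0$, then $\sum_i \theta_{\sm_i}$ vanishes on $\fl^\ab \otimes I$ where $I = \prod_i \sm_i^{\ell_{\sm_i}}$. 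Since $I$ has finite support (it is contained only in the maximal ideals $\sm_1, \dots, \sm_r$), the sum lies in $\cL(X,\fl^\ab)$. Injectivity is straightforward: if $\sum_\sm \theta_\sm = 0$, I would use that for a fixed $\sm_0$ among the active points, the ideals $\sm_0^{\ell_{\sm_0}}$ and $\prod_{\sm \ne \sm_0} \sm^{\ell_\sm}$ have disjoint supports (the latter is not contained in $\sm_0$ by the Chinese-remainder/coprimality reasoning underlying Lemma~\ref{lem:ideal-product-intersection}), hence sum to $A$; evaluating $\theta_{\sm_0} = -\sum_{\sm \ne \sm_0} \theta_\sm$ against $\fl^\ab \otimes \prod_{\sm \ne \sm_0} \sm^{\ell_\sm}$ forces $\theta_{\sm_0}$ to be determined by its restriction there, and running the argument carefully shows each $\theta_\sm = 0$.

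**Surjectivity — the main point.** The substantive step is surjectivity: given $\theta \in \cL(X,\fl^\ab)$, I must decompose it as a finite sum $\theta = \sum_\sm \theta_\sm$ with $\theta_\sm$ supported at $\sm$. By definition $\theta$ vanishes on $\fl^\ab \otimes I$ for some ideal $I$ of finite support, say $\Supp I = \{\sm_1, \dots, \sm_r\}$. Since $A$ is finitely generated, hence Noetherian, Lemma~\ref{lem:ideal-contains-power-of-radical} gives $\prod_{i=1}^r \sm_i^n \subseteq I$ for some $n$, so $\theta$ vanishes on $\fl^\ab \otimes \prod_i \sm_i^n$. The ideals $\sm_i^n$ have pairwise disjoint supports, so by (an iteration of) Lemma~\ref{lem:ideal-product-intersection} we get $\prod_i \sm_i^n = \bigcap_i \sm_i^n$ and a ring isomorphism $A/\prod_i \sm_i^n \cong \bigoplus_i A/\sm_i^n$. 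Tensoring with $\fl^\ab$ and dualizing, $\theta$ factors through $(\fl^\ab \otimes A/\prod_i \sm_i^n)^* \cong \bigoplus_i (\fl^\ab \otimes A/\sm_i^n)^*$; the $i$-th component, pulled back along $A \twoheadrightarrow A/\sm_i^n$, is a form $\theta_{\sm_i}$ vanishing on $\fl^\ab \otimes \sm_i^n$, hence lies in $\cL_{\sm_i}(X,\fl^\ab)$, and clearly $\theta = \sum_i \theta_{\sm_i}$.

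**Assembling.** Linearity of the map $(\theta_\sm)_\sm \mapsto \sum_\sm \theta_\sm$ is immediate, so together with the three points above this establishes the claimed isomorphism. I expect the only delicate bookkeeping to be in the injectivity argument and in verifying that the direct-sum decomposition of $A/\prod_i\sm_i^n$ carries over compatibly after tensoring with $\fl^\ab$ and dualizing — but since $\fl^\ab$ is finite dimensional, $(\fl^\ab \otimes M)^* \cong \fl^{\ab,*} \otimes M^*$ naturally, and finite direct sums pass through $\otimes$ and $(-)^*$, so this is routine. No genuine obstacle is anticipated; the content is entirely a repackaging of the commutative-algebra lemmas from Section~\ref{subsec:comm-alg}.
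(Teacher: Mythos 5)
Your proposal is correct and follows essentially the same route as the paper: the map $(\theta_\sm)_\sm \mapsto \sum_\sm \theta_\sm$, well-definedness via a product ideal, surjectivity via Lemma~\ref{lem:ideal-contains-power-of-radical} and the CRT-type decomposition $A/\prod_i \sm_i^n \cong \bigoplus_i A/\sm_i^n$. Your injectivity sketch (coprimality of $\sm_0^{\ell_{\sm_0}}$ and $\prod_{\sm \ne \sm_0}\sm^{\ell_\sm}$) is a slight rephrasing of the paper's argument, which instead observes directly that in the decomposition $\bigoplus_i (\fl^\ab \otimes A/\sm_i^{\ell_i})$ each $\theta_{\sm_i}$ is nonzero only on the $i$-th summand, so the sum determines the components; both come down to the same commutative-algebra fact.
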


\begin{proof}
  We abbreviate $\cL_\sm = \cL_\sm(X,\fl^\ab)$ and $\cL = \cL(X,\fl^\ab)$.  Since $\cL_\sm \subseteq \cL$ for all $\sm \in X_\rat$, we have a natural map $\varphi : \bigoplus_{\sm \in X_\rat} \cL_\sm \to \cL$ given by $\varphi ((\theta_\sm)_{\sm \in X_\rat}) = \sum_{\sm \in X_\rat} \theta_\sm$.  Any element in the image of $\varphi$ can be written as a finite sum $\sum_{i=1}^n \theta_{\sm_i}$ for some pairwise distinct $\sm_i \in X_\rat$, $i=1,\dots,n$, where $\theta_{\sm_i} \in \cL_{\sm_i}$.  For each $i=1,\dots,n$, choose $\ell_i \in \N$ such that $\theta_{\sm_i}(\fl^\ab \otimes \sm_i^{\ell_i})=0$.  Let $I=\prod_{i=1}^n \sm_i^{\ell_i}$.  Then $\sum_{i=1}^n \theta_{\sm_i}$ factors as
  \[ \ts
    \fl^\ab \otimes A \twoheadrightarrow (\fl^\ab \otimes A)/(\fl^\ab \otimes I) \cong \bigoplus_{i=1}^n (\fl^\ab \otimes A/\sm_i^{\ell_i}) \to \kk,
  \]
  where $\theta_{\sm_i}$ is only nonzero on the $i$-th summand.  This shows that $\varphi$ is injective.

  Any element $\theta$ of $\cL$ factors through $\fl^\ab \otimes (A/I)$ for some ideal $I$ of $A$ with finite support.  By Lemma~\ref{lem:ideal-contains-power-of-radical}, $I \supseteq \prod_{i=1}^n \sm_i^\ell$ for some distinct maximal ideals $\sm_1,\dots,\sm_n$ and $\ell \in \N$.  Hence $\theta$ also factors through $\fl^\ab \otimes \left( A/\left( \prod_{i=1}^n \sm_i^\ell \right) \right) \cong \bigoplus_{i=1}^n (\fl^\ab \otimes A/\sm_i^\ell)$.  Let $\theta_i$, $i=1,\dots,n$ be the projection $\fl^\ab \otimes A \twoheadrightarrow \fl^\ab \otimes A/\sm_i^\ell$ followed by the restriction of $\theta$ to this summand.  Then $\theta = \sum_{i=1}^n \theta_i$.  Thus $\varphi$ is surjective.
\end{proof}

The following proposition classifies the irreducible finite dimensional generalized evaluation representations of $\fl \otimes A$, where $\fl$ is a reductive Lie algebra.  We let $\fl^\rss = [\fl,\fl]$ be its semisimple part and $\fl^\ab$ be its center, so that $\fl = \fl^\rss \oplus \fl^\ab$.

\begin{prop} \label{prop:classification-map-alg-modules}
  The map
  \[
    (\fl^\ab \otimes A)^* \times \cE(X,\fl^\rss) \to \cR(X,\fl),\quad (\theta,\Psi) \mapsto \theta \otimes \ev_\Psi,
  \]
  is a bijection, where $\cR(X,\fl)$ is the set of isomorphism classes of irreducible finite dimensional $(\fl \otimes A)$-modules.  In addition, $\theta \otimes \ev_\Psi$ is a generalized evaluation module if and only if $\theta \in \cL(X,\fl^\ab)$.  Thus the map
  \[
    (\theta, \Psi) \mapsto \theta \otimes \ev_\Psi,
  \]
  is a bijection from $\cL(X,\fl^\ab) \times \cE(X,\fl^\rss)$ to the set of isomorphism classes of irreducible finite dimensional generalized evaluation $(\fl \otimes A)$-modules.
\end{prop}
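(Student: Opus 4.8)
The plan is to exploit the decomposition $\fl \otimes A = (\fl^\rss \otimes A) \oplus (\fl^\ab \otimes A)$ into a direct sum of ideals and reduce everything to the semisimple and abelian pieces. First I would apply Corollary~\ref{cor:irreds-for-tensor-prods} (with $\g^1 = \fl^\rss$, $\g^2 = \fl^\ab$, $A^1 = A^2 = A$): every irreducible finite dimensional $(\fl \otimes A)$-module is an outer tensor product $W \otimes V^\ab$ with $W$ irreducible finite dimensional over $\fl^\rss \otimes A$ and $V^\ab$ irreducible finite dimensional over the abelian algebra $\fl^\ab \otimes A$, hence $V^\ab = \kk_\theta$ for a unique $\theta \in (\fl^\ab \otimes A)^*$; conversely $\theta \otimes W$ is always irreducible (by Lemma~\ref{lem:outer-tensor-irred}, or directly since $\fl^\ab \otimes A$ acts by the scalar $\theta$). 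Injectivity of $(\theta, \Psi) \mapsto \theta \otimes \ev_\Psi$ is then immediate: an isomorphism $\theta \otimes \ev_\Psi \cong \theta' \otimes \ev_{\Psi'}$ restricts on $\fl^\ab \otimes A$ to force $\theta = \theta'$ (equality of scalars), and restricts on $\fl^\rss \otimes A$ to give $\ev_\Psi \cong \ev_{\Psi'}$, whence $\Psi = \Psi'$ by the injectivity part of Proposition~\ref{prop:E-enumerates}.

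The surjectivity onto $\cR(X, \fl)$ rests on identifying the irreducible finite dimensional $(\fl^\rss \otimes A)$-modules with the evaluation modules $\ev_\Psi$, $\Psi \in \cE(X, \fl^\rss)$. For this, let $W$ be such a module. The argument of the implication \eqref{item:V-quasifinite}$\Rightarrow$\eqref{item:I-annihilates-V} in Theorem~\ref{thm:hw-quasifinite-classification} applies verbatim to the semisimple Lie algebra $\fl^\rss$ (its only structural input being $\h \subseteq [\n^+, \n^-]$), so $(\fl^\rss \otimes I)W = 0$ for an ideal $I$ of finite codimension; alternatively one may cite the non-super classification of \cite{NSS09} (see also \cite{CFK}). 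Hence $W$ has finite support, and by Proposition~\ref{prop:gen-eval-finite-support}\eqref{prop-item:gen-eval-rep-iff-finite-support} it is a generalized evaluation module $\bigotimes_{i=1}^\ell \ev_{\sm_i^{n_i}}(W_i)$. To promote it to an honest evaluation module I would observe that $\fl^\rss \otimes (\sm_i/\sm_i^{n_i})$ is a nilpotent ideal of $\fl^\rss \otimes (A/\sm_i^{n_i})$ (perfectness of $\fl^\rss$ identifies its lower central series with the $\fl^\rss \otimes \sm_i^k$) which equals its own bracket with the whole algebra, $[\fl^\rss \otimes (A/\sm_i^{n_i}), \fl^\rss \otimes (\sm_i/\sm_i^{n_i})] = \fl^\rss \otimes (\sm_i/\sm_i^{n_i})$; since a nilpotent ideal acts on an irreducible finite dimensional module through a character (Lie's theorem) and such an ideal equals its bracket against the algebra, it acts by zero. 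Thus each $W_i$ factors through $\fl^\rss \otimes (A/\sm_i) \cong \fl^\rss$, so $W = \ev_\Psi$ with $\Psi(\sm_i) = [W_i]$; equivalently, $\Ann_A W = \prod_i \sm_i$ is radical and one invokes Proposition~\ref{prop:gen-eval-finite-support}\eqref{prop-item:eval-rep-iff-radical-annihilator}.

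For the last statement I would compute $\Ann_A(\theta \otimes \ev_\Psi)$. Since $A$ is finitely generated, by Proposition~\ref{prop:gen-eval-finite-support}\eqref{prop-item:gen-eval-rep-iff-finite-support} together with Lemmas~\ref{lem:ideal-codim-implies-support}--\ref{lem:ideal-support-implies-codim}, the module $\theta \otimes \ev_\Psi$ is a generalized evaluation module iff $\Ann_A(\theta \otimes \ev_\Psi)$ has finite codimension. An ideal $I$ annihilates $\theta \otimes \ev_\Psi$ iff $(\fl^\rss \otimes I)\ev_\Psi = 0$ and $\theta(\fl^\ab \otimes I) = 0$, i.e.\ iff $I \subseteq \Ann_A \ev_\Psi$ (which by Lemma~\ref{lem:g-perfect-annihilator}, $\fl^\rss$ being perfect, is the radical finite-codimension ideal $\prod_{\sm \in \Supp \Psi} \sm$) and $\theta(\fl^\ab \otimes I) = 0$. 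If $\theta \in \cL(X, \fl^\ab)$, choose a finite-codimension ideal $I'$ with $\theta(\fl^\ab \otimes I') = 0$; then $\Ann_A \ev_\Psi \cap I'$ is a finite-codimension ideal satisfying both conditions, so $\Ann_A(\theta \otimes \ev_\Psi)$ has finite codimension. Conversely, if $\Ann_A(\theta \otimes \ev_\Psi)$ has finite codimension, then it is an ideal of finite codimension (hence finite support) on which $\theta(\fl^\ab \otimes -)$ vanishes, so $\theta \in \cL(X, \fl^\ab)$. Restricting the bijection of the first statement to $\cL(X, \fl^\ab) \times \cE(X, \fl^\rss)$ therefore maps onto precisely the isomorphism classes of irreducible finite dimensional generalized evaluation $(\fl \otimes A)$-modules.

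The step I expect to be the main obstacle is the promotion of a generalized evaluation module over $\fl^\rss \otimes A$ to an honest evaluation module, that is, the assertion that a nilpotent ideal coinciding with its bracket against the ambient Lie algebra must annihilate every irreducible finite dimensional module. This is the one place where perfectness of $\fl^\rss$ enters in an essential way, and its failure for the abelian factor $\fl^\ab$ is exactly why the first bijection involves the whole of $(\fl^\ab \otimes A)^*$ while the generalized-evaluation classification involves only the subspace $\cL(X, \fl^\ab)$.
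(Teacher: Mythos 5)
Your argument is correct, and it departs from the paper's proof in its central step. The paper disposes of the tensor decomposition by citing Proposition~\ref{prop:density-theorem} together with \cite[Lem.~2.7]{Li04}, and then simply invokes the non-super classification \cite[Thm.~5.5]{NSS09} to identify the irreducible finite dimensional $(\fl^\rss\otimes A)$-modules with evaluation modules, after which it finishes exactly as you do, via $\Ann_A(\theta\otimes\ev_\Psi)=(\Ann_A\theta)\cap(\Ann_A\ev_\Psi)$ and Proposition~\ref{prop:gen-eval-finite-support}. You instead appeal to the paper's own Corollary~\ref{cor:irreds-for-tensor-prods} for the decomposition (which is legitimate since both $\fl^\rss$ and $\fl^\ab$ are reductive) and reprove the $\fl^\rss$ classification from scratch: first transplanting the argument \eqref{item:V-quasifinite}$\Rightarrow$\eqref{item:I-annihilates-V} of Theorem~\ref{thm:hw-quasifinite-classification} to the semisimple Lie algebra (you correctly identify $\h\subseteq[\n^+,\n^-]$ as the only structural input), and then using the observation that for perfect $\fl^\rss$ the ideal $\fl^\rss\otimes(\sm/\sm^n)$ of $\fl^\rss\otimes(A/\sm^n)$ is nilpotent and equals its own bracket with the ambient algebra, so that Lie's theorem plus the standard invariance-of-weight-spaces lemma (a character of a nilpotent ideal vanishes on $[L,\mathfrak{n}]$) forces it to act by zero, killing the nilradical and reducing a generalized evaluation module to an honest one. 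This buys you a self-contained argument with no external citation, at the cost of a page or so of extra work that the paper avoids by leaning on \cite{NSS09}; both routes are sound, and your closing observation about why perfectness is precisely what distinguishes the $\fl^\rss$ factor (full evaluation) from the $\fl^\ab$ factor (only $\cL(X,\fl^\ab)$ when one wants generalized evaluation) captures the structural content nicely.
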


\begin{proof}
  It follows from Proposition~\ref{prop:density-theorem} and \cite[Lem.~2.7]{Li04} that all irreducible finite dimensional representations of $\fl \otimes A \cong (\fl^\ab \otimes A) \oplus (\fl^\rss \otimes A)$ are of the form $\theta \otimes \rho$, where $\theta \in (\fl^\ab \otimes A)^*$ is an irreducible finite dimensional (hence one-dimensional) representation of the abelian Lie algebra $\fl^\ab \otimes A$ and $\rho$ is an irreducible finite dimensional representation of $\fl^\rss \otimes A$.  By \cite[Thm.~5.5]{NSS09}, such $\rho$ are precisely the irreducible evaluation representations of $\fl^\rss \otimes A$.  This, together with Proposition~\ref{prop:E-enumerates}, proves the first bijection.

  For $\theta \in (\fl^\ab \otimes A)^*$ and $\Psi \in \cE(X,\fl^\rss)$, we have $\Ann_A (\theta \otimes \ev_\Psi) = (\Ann_A \theta) \cap (\Ann_A \ev_\Psi)$.  Thus
  \[
    \Supp_A (\theta \otimes \ev_\Psi) = (\Supp_A \theta) \cup (\Supp_A \ev_\Psi).
  \]
  Since $\Supp_A \ev_\Psi = \Supp \Psi$ is finite, we see that $\theta \otimes \ev_\Psi$ has finite support if and only if $\theta$ has finite support.  The second statement of the proposition then follows from Proposition~\ref{prop:gen-eval-finite-support}\eqref{prop-item:gen-eval-rep-iff-finite-support}.
\end{proof}

For $\sm \in X_\rat$, we have the natural projections
\[
  \cdots \twoheadrightarrow A/\sm^3 \twoheadrightarrow A/\sm^2 \twoheadrightarrow A/\sm \to 0.
\]
This gives rise to the sequence of injections
\[
  \cdots \hookleftarrow (A/\sm^3)^* \hookleftarrow (A/\sm^2)^* \hookleftarrow (A/\sm)^* \hookleftarrow 0.
\]
Via these injections, we view $(A/\sm^k)^*$ as a subspace of $(A/\sm^\ell)^*$ for $k \le \ell$.  The following lemma gives a concrete description of $\cL(X,\fl^\ab)$.

\begin{lem}
  For $\sm \in X_\rat$, we have $\cL_\sm(X,\fl^\ab) \cong \bigcup_{\ell=1}^\infty (\fl^\ab \otimes (A/\sm^\ell))^*$.  Therefore,
  \[
    \cL(X,\fl^\ab) \cong \bigoplus_{\sm \in X_\rat} \left( \bigcup_{\ell=1}^\infty (\fl^\ab \otimes (A/\sm^\ell))^* \right).
  \]
\end{lem}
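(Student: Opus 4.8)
The plan is to prove the two claimed isomorphisms by unwinding the definition of $\cL_\sm(X,\fl^\ab)$ and combining it with the decomposition in Lemma~\ref{lem:cL-sum-decomp}. The second isomorphism is an immediate consequence of the first once we apply Lemma~\ref{lem:cL-sum-decomp}, so the substance is entirely in establishing $\cL_\sm(X,\fl^\ab) \cong \bigcup_{\ell=1}^\infty (\fl^\ab \otimes (A/\sm^\ell))^*$, where the union is taken inside $(\fl^\ab \otimes A)^*$ via the chain of injections $(\fl^\ab \otimes (A/\sm^k))^* \hookrightarrow (\fl^\ab \otimes (A/\sm^\ell))^*$ for $k \le \ell$ described just before the lemma (these are dual to the surjections $A/\sm^\ell \twoheadrightarrow A/\sm^k$, tensored with $\fl^\ab$).

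First I would fix $\sm \in X_\rat$ and recall that, by definition, $\theta \in \cL_\sm(X,\fl^\ab)$ means $\theta \in (\fl^\ab \otimes A)^*$ with $\theta(\fl^\ab \otimes \sm^\ell) = 0$ for some $\ell \gg 0$. For the inclusion $\supseteq$: given $\theta$ lying in the image of some $(\fl^\ab \otimes (A/\sm^\ell))^*$, it factors through the projection $\fl^\ab \otimes A \twoheadrightarrow \fl^\ab \otimes (A/\sm^\ell)$, hence annihilates $\fl^\ab \otimes \sm^\ell$, so $\theta \in \cL_\sm(X,\fl^\ab)$. For the inclusion $\subseteq$: if $\theta(\fl^\ab \otimes \sm^\ell) = 0$, then since $\fl^\ab \otimes \sm^\ell$ is exactly the kernel of the surjection $\fl^\ab \otimes A \twoheadrightarrow \fl^\ab \otimes (A/\sm^\ell)$ (here I use that tensoring the short exact sequence $0 \to \sm^\ell \to A \to A/\sm^\ell \to 0$ with the $\kk$-vector space $\fl^\ab$ preserves exactness), $\theta$ descends to a well-defined linear form on $\fl^\ab \otimes (A/\sm^\ell)$, i.e.\ $\theta$ lies in the image of $(\fl^\ab \otimes (A/\sm^\ell))^* \hookrightarrow (\fl^\ab \otimes A)^*$, hence in the union. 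This gives the first isomorphism (in fact an equality of subspaces of $(\fl^\ab \otimes A)^*$).

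Then the displayed isomorphism follows by substituting into Lemma~\ref{lem:cL-sum-decomp}:
\[ \ts
  \cL(X,\fl^\ab) \cong \bigoplus_{\sm \in X_\rat} \cL_\sm(X,\fl^\ab) \cong \bigoplus_{\sm \in X_\rat} \left( \bigcup_{\ell=1}^\infty (\fl^\ab \otimes (A/\sm^\ell))^* \right).
\]
I do not anticipate a serious obstacle here; the only point requiring a word of care is the compatibility of the identifications, namely that the inclusions $\cL_\sm \subseteq \cL$ used in Lemma~\ref{lem:cL-sum-decomp} match the inclusions $\bigcup_\ell (\fl^\ab \otimes (A/\sm^\ell))^* \subseteq (\fl^\ab \otimes A)^*$ just described — but both are simply "precompose with the appropriate quotient map," so they agree on the nose. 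One should also note that $\bigcup_{\ell=1}^\infty (\fl^\ab \otimes (A/\sm^\ell))^*$ is genuinely an increasing union (the maps in the chain are injective and compatible), so the notation is unambiguous. This completes the proof.
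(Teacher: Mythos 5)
Your proposal is correct and follows exactly the same route the paper intends: the paper's own proof is simply ``This follows immediately from the above discussion,'' where the discussion in question consists of the definition of $\cL_\sm(X,\fl^\ab)$, the chain of dual injections $(A/\sm^k)^* \hookrightarrow (A/\sm^\ell)^*$ realized by precomposition with the quotient maps, and Lemma~\ref{lem:cL-sum-decomp}. You have spelled out precisely the content of that discussion — the equivalence between vanishing on $\fl^\ab \otimes \sm^\ell$ and factoring through $\fl^\ab \otimes (A/\sm^\ell)$, plus the substitution into Lemma~\ref{lem:cL-sum-decomp} — so the two proofs coincide.
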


\begin{proof}
  This follows immediately from the above discussion.
\end{proof}

\begin{rem}
  In this section we have assumed that $\Gamma$ acts freely on $X_\rat$ since it simplifies the definitions and we will make this assumption in our classification in Section~\ref{sec:classification-equivariant}.  To consider the case where $\Gamma$ does not act freely on $X_\rat$, we should modify the evaluation maps above to be maps to sums of subalgebras of $\g$ fixed by isotropy subgroups.  See \cite{NSS09} for these more general definitions in the case that $\g$ is a finite dimensional Lie algebra.
\end{rem}

%
\section{Kac modules and their irreducible quotients} \label{sec:kac-modules}
%

In this section we assume that $A$ is finitely generated and that $\g$ is a basic classical Lie superalgebra of type I or $\g = \mathfrak{sl}(n,n)$, $n \ge 1$.  Recall the distinguished $\Z$-grading $\g = \g_{-1} \oplus \g_0 \oplus \g_1$ of Section~\ref{subsec:Lie-superalg} and that $\g_{\bar 0} = \g_0$.

\begin{defin}[Modules $V(M)$] \label{def:V(M)}
  Let $M$ be an irreducible finite dimensional $(\g_0 \otimes A)$-module.  Let $\g_1 \otimes A$ act trivially on $M$ and define the induced module
  \[
    {\bar V}(M) = U(\g \otimes A) \otimes_{U((\g_0 \oplus \g_1) \otimes A)} M.
  \]
  The fact that a $(\g \otimes A)$-submodule of ${\bar V}(M)$ is proper if and only if it intersects $M$ nontrivially guarantees the existence of a unique maximal proper $(\g \otimes A)$-submodule $N(M)$ of ${\bar V}(M)$.  We define
  \[
    V(M) = {\bar V}(M)/N(M).
  \]
  It follows from the definition that $V(M)$ is an irreducible $(\g \otimes A)$-module and that $V(M_1) \cong V(M_2)$ as $(\g \otimes A)$-modules if and only if $M_1 \cong M_2$ as $(\g_0 \otimes A)$-modules.
\end{defin}

\begin{rem}
  Since $\g_0$ is a finite dimensional Lie algebra, the irreducible finite dimensional $(\g_0 \otimes A)$-modules were classified in \cite{NSS09} (see Proposition~\ref{prop:classification-map-alg-modules} of the current paper).  The module ${\bar V}(M)$ of Definition~\ref{def:V(M)} is a generalization of a \emph{Kac module}.  It is precisely a Kac module when $A \cong \kk$.  See \cite[Thm.~8]{Kac77}, \cite[p.~613]{Kac78}, or \cite[\S2.35]{FSS00}.
\end{rem}

\begin{lem} \label{lem:kac-module-annihilator}
  Suppose that $M$ is an irreducible finite dimensional $(\g_0 \otimes A)$-module and that $I$ is an ideal of $A$. Then $(\g_0 \otimes I)M=0$ if and only if $(\g \otimes I)V(M)=0$.  In particular,
  \begin{enumerate}
    \item $\Ann_A M = \Ann_A V(M)$,
    \item $\Supp_A M = \Supp_A V(M)$, and
    \item $V(M)$ is an evaluation module (resp.\ generalized evaluation module) if and only if $M$ is.
  \end{enumerate}
\end{lem}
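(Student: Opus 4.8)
The plan is to prove the displayed biconditional and then read off (a)--(c).

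One direction is immediate: by the property of ${\bar V}(M)$ recorded in Definition~\ref{def:V(M)}, the maximal proper submodule $N(M)$ meets $M$ trivially, so $M$ embeds into $V(M)$ as a $(\g_0 \otimes A)$-submodule; since $\g_0 \otimes I \subseteq \g \otimes I$, it follows that $(\g \otimes I)V(M) = 0$ forces $(\g_0 \otimes I)M = 0$.

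For the converse, suppose $(\g_0 \otimes I)M = 0$. Since $\g_1 \otimes A$ annihilates $M$ by construction, in fact $((\g_0 \oplus \g_1) \otimes I)M = 0$, so the $((\g_0 \oplus \g_1) \otimes A)$-action on $M$ factors through $(\g_0 \oplus \g_1) \otimes (A/I)$. I would then introduce the ``Kac module over $A/I$'',
\[
  {\bar V}'(M) := U(\g \otimes (A/I)) \otimes_{U((\g_0 \oplus \g_1) \otimes (A/I))} M,
\]
viewed as a $(\g \otimes A)$-module through the surjection $q \colon \g \otimes A \twoheadrightarrow \g \otimes (A/I)$, so that $\g \otimes I$ acts by zero on it. The assignment $u \otimes m \mapsto U(q)(u) \otimes m$ is a well-defined surjective homomorphism of $(\g \otimes A)$-modules ${\bar V}(M) \twoheadrightarrow {\bar V}'(M)$, the well-definedness being precisely the fact that the $((\g_0 \oplus \g_1) \otimes A)$-action on $M$ factors through $A/I$. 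By the PBW theorem (Lemma~\ref{lem:PBW}), $U(\g \otimes (A/I))$ is free as a right $U((\g_0 \oplus \g_1) \otimes (A/I))$-module with $1$ among a basis (here $\g_{-1} \otimes (A/I)$ serves as an abelian odd vector-space complement of $(\g_0 \oplus \g_1) \otimes (A/I)$ in $\g \otimes (A/I)$), so the canonical map $M \to {\bar V}'(M)$ is injective; in particular ${\bar V}'(M) \ne 0$, hence the kernel $K$ of ${\bar V}(M) \twoheadrightarrow {\bar V}'(M)$ is a \emph{proper} submodule and so $K \subseteq N(M)$. Therefore $V(M) = {\bar V}(M)/N(M)$ is a quotient of ${\bar V}(M)/K \cong {\bar V}'(M)$, and since $\g \otimes I$ annihilates ${\bar V}'(M)$ it annihilates $V(M)$, proving the biconditional.

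Now (a) is immediate, since an ideal $I$ of $A$ satisfies $(\g_0 \otimes I)M = 0$ if and only if it satisfies $(\g \otimes I)V(M) = 0$, so the largest such ideals coincide; and (b) follows by applying $\Supp$. For (c), one uses that a generalized evaluation module is finite dimensional, hence has finite support by Proposition~\ref{prop:gen-eval-finite-support}, while conversely if $M$ (equivalently $V(M)$, by (b)) has finite support then $\Ann_A M = \Ann_A V(M)$ has finite codimension, $A$ being finitely generated (Lemma~\ref{lem:ideal-support-implies-codim}), so that ${\bar V}'(M)$ with $I = \Ann_A M$, and hence its quotient $V(M)$, is finite dimensional. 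Combining these remarks with (a), (b) and Proposition~\ref{prop:gen-eval-finite-support} (whose hypotheses cover both the reductive Lie algebra $\g_0$ and the superalgebra $\g$) yields ``$V(M)$ is a generalized evaluation module $\iff V(M)$ has finite support $\iff M$ has finite support $\iff M$ is a generalized evaluation module'', and the identical chain with ``finite reduced support'' replacing ``finite support'' (noting that $\Ann_A M = \Ann_A V(M)$ is radical for one exactly when for the other) settles the case of evaluation modules. The one genuine obstacle is the converse of the biconditional, whose crux is the device above: the induced module built over $A/I$ is a quotient of ${\bar V}(M)$ on which $\g \otimes I$ already acts trivially, which forces the maximal proper submodule $N(M)$ to contain its kernel; everything else is routine manipulation of ideals and dimensions.
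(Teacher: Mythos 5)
Your proof is correct, and the forward direction and the reductions in (a)--(c) match the paper's reasoning. The interesting difference is in the hard direction of the biconditional, where you take a genuinely different route. The paper argues directly inside $\bar V(M)$: it decomposes $\bar V(M) = (1\otimes M) \oplus (U_+(\g_{-1}\otimes A)\otimes M)$ and proves by explicit commutator manipulations that $(\g_0\otimes I)\bar V(M)$ and $(\g_1\otimes I)\bar V(M)$ are both contained in $(\g_{-1}\otimes I)\bar V(M)$, so that $(\g\otimes I)\bar V(M) = (\g_{-1}\otimes I)\bar V(M)$; this is then visibly proper because $\bar V(M) \cong U(\g_{-1}\otimes A)\otimes M$ as vector spaces. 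You instead introduce the reduced induced module $\bar V'(M)$ over $A/I$, observe that the action on $M$ already factors through $A/I$ so that the natural surjection $\bar V(M) \twoheadrightarrow \bar V'(M)$ is well defined, and then use freeness of $U(\g\otimes(A/I))$ over $U((\g_0\oplus\g_1)\otimes(A/I))$ (PBW) to see the kernel is proper, hence contained in $N(M)$. Both arguments invoke PBW at the crucial step, but your ``base change to $A/I$'' argument is more structural: it bypasses all the bracket computations and also immediately shows why $(\g\otimes I)\bar V(M)$ lies in $N(M)$, rather than first identifying it as $(\g_{-1}\otimes I)\bar V(M)$. The trade-off is that it requires checking well-definedness of the surjection, which you do correctly (it reduces precisely to $((\g_0\oplus\g_1)\otimes I)M=0$). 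Two small remarks: in Definition~\ref{def:V(M)} the paper's phrase ``proper if and only if it intersects $M$ nontrivially'' is evidently a slip for ``trivially'', and you correctly read it that way; and in part (c) your inline derivation that finite support of $M$ forces $V(M)$ finite dimensional is essentially the forward content of Proposition~\ref{prop:fd-Kac-vs-gen-eval}, which the paper proves afterwards using this lemma, so it is good that you re-derive it rather than cite it.
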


\begin{proof}
  It suffices to prove the first statement since the others follow immediately from Definitions~\ref{def:annihilator} and~\ref{def:support}, and Proposition~\ref{prop:gen-eval-finite-support}.  It is clear that $(\g \otimes I)V(M)=0$ implies $(\g_0 \otimes I)M=0$ and so it remains to show the other implication.

  Let $I$ be an ideal of $A$ such that $(\g_0 \otimes I)M=0$.  It suffices to show that $(\g \otimes I) {\bar V}(M)$ is a proper submodule of ${\bar V}(M)$.  The fact that $(\g \otimes I){\bar V}(M)$ is a submodule follows easily from the fact that $\g \otimes I$ is an ideal of $\g \otimes A$.  It remains to show that this submodule is proper.

  Let $U_+(\g \otimes A) = \bigoplus_{i > 0} U_i(\g \otimes A)$, where $U_i(\g \otimes A)$ denotes the $i$-th step of the usual filtration on the enveloping superalgebra.  Then $U_+(\g \otimes A)$ is a subalgebra of $U(\g \otimes A)$ and we have a vector space decomposition
  \[
    {\bar V}(M) = (1 \otimes M) \oplus \left( U_+(\g_{-1} \otimes A) \otimes M \right).
  \]

  We first claim that $(\g_0 \otimes I) {\bar V}(M) \subseteq (\g_{-1} \otimes I) {\bar V}(M)$.  Note that $U_+(\g_{-1} \otimes A)$ is spanned by elements of the form
  \begin{equation} \label{eq:Ug-1A+element}
    (u_1 \otimes f_1)(u_2 \otimes f_2) \cdots (u_k \otimes f_k),\quad u_i \in \g_{-1},\ f_i \in A,\ i=1,\dots,k,\ k \in \N_+.
  \end{equation}
  For $u \in \g_0$ and $f \in I$, we have (recall that $\g_{-1}$ is abelian)
  \begin{multline*}
    (u \otimes f) (u_1 \otimes f_1)(u_2 \otimes f_2) \cdots (u_k \otimes f_k) = [u \otimes f, u_1 \otimes f_1](u_2 \otimes f_2) \cdots (u_k \otimes f_k) \\
    + (u_1 \otimes f_1)[u \otimes f, u_2 \otimes f_2] (u_3 \otimes f_3) \cdots (u_k \otimes f_k) + \cdots + (u_1 \otimes f_1) \cdots (u_{k-1} \otimes f_{k-1}) [u \otimes f, u_k \otimes f_k] \\
    + (u_1 \otimes f_1) \cdots (u_k \otimes f_k) (u \otimes f) \subseteq (\g_{-1} \otimes I)U(\g_{-1} \otimes A) + U(\g_{-1} \otimes A)(\g_0 \otimes I).
  \end{multline*}
  It therefore follows that
  \[
    (\g_0 \otimes I) U_+(\g_{-1} \otimes A) \subseteq (\g_{-1} \otimes I)U(\g_{-1} \otimes A) + U(\g_{-1} \otimes A)(\g_0 \otimes I).
  \]
  Thus,
  \begin{align*}
    (\g_0 \otimes I) {\bar V}(M) &= (\g_0 \otimes I)(1 \otimes M) + (\g_0 \otimes I)(U_+(\g_{-1} \otimes A) \otimes M) \\
    &\subseteq (\g_{-1} \otimes I)U(\g_{-1} \otimes A) \otimes M + U(\g_{-1} \otimes A)(\g_0 \otimes I) \otimes M \\
    &= (\g_{-1} \otimes I)U(\g_{-1} \otimes A) \otimes M \\
    &= (\g_{-1} \otimes I){\bar V}(M)
  \end{align*}
  as desired.

  Next we claim that $(\g_1 \otimes I) {\bar V}(M) \subseteq (\g_{-1} \otimes I) {\bar V}(M)$.  First, an argument similar to the one above shows that
  \[
    (\g_1 \otimes I)U(\g_{-1} \otimes A) \subseteq U(\g_{-1} \otimes A)(\g_0 \otimes I) U(\g_{-1} \otimes A) + U(\g_{-1} \otimes A) (\g_1 \otimes I).
  \]
  Then
  \begin{align*}
    (\g_1 \otimes I){\bar V}(M) &= (\g_1 \otimes I)U(\g_{-1} \otimes A) \otimes M \\
    &= U(\g_{-1} \otimes A)(\g_0 \otimes I) U(\g_{-1} \otimes A) \otimes M + U(\g_{-1} \otimes A)(\g_1 \otimes I) \otimes M \\
    &= U(\g_{-1} \otimes A)(\g_0 \otimes I) {\bar V}(M) \\
    &\subseteq U(\g_{-1} \otimes A)(\g_{-1} \otimes I){\bar V}(M) \\
    &= (\g_{-1} \otimes I)U(\g_{-1} \otimes A){\bar V}(M) \\
    &= (\g_{-1} \otimes I){\bar V}(M),
  \end{align*}
  proving our claim.

  We thus have
  \[
    (\g \otimes I) {\bar V}(M) = (\g_{-1} \otimes I){\bar V}(M) + (\g_0 \otimes I){\bar V}(M) + (\g_1 \otimes I){\bar V}(M) = (\g_{-1} \otimes I){\bar V}(M).
  \]
  Since ${\bar V}(M) \cong U(\g_{-1} \otimes A) \otimes M$ as vector spaces, it follows that $(\g_{-1} \otimes I){\bar V}(M)$ is a proper submodule of ${\bar V}(M)$.
\end{proof}

\begin{prop} \label{prop:fd-Kac-vs-gen-eval}
  The module $V(M)$ is finite dimensional if and only if $M$ is a generalized evaluation module.
\end{prop}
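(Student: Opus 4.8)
The plan is to reduce both implications to the finite-support criterion for generalized evaluation modules (Proposition~\ref{prop:gen-eval-finite-support}, applied with target the reductive Lie algebra $\g_0 = \g_{\bar 0}$), using the annihilator/support identities of Lemma~\ref{lem:kac-module-annihilator} to pass between $M$ and $V(M)$.

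First I would treat the direction ``$V(M)$ finite dimensional $\Rightarrow$ $M$ is a generalized evaluation module''. Since $V(M)$ is an irreducible finite dimensional $(\g \otimes A)$-module, Corollary~\ref{cor:finite-codim-annihilator} furnishes an ideal $I \subseteq A$ of finite codimension with $(\g \otimes I)V(M) = 0$. By Lemma~\ref{lem:kac-module-annihilator} this gives $(\g_0 \otimes I)M = 0$, so $\Ann_A M$ contains a finite-codimension ideal and hence has finite support by Lemma~\ref{lem:ideal-codim-implies-support}. Proposition~\ref{prop:gen-eval-finite-support}\eqref{prop-item:gen-eval-rep-iff-finite-support}, with $\g_0$ in place of $\g$, then identifies $M$ as a generalized evaluation module.

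For the converse, suppose $M$ is a generalized evaluation module. Then it has finite support by Proposition~\ref{prop:gen-eval-finite-support}, and since $A$ is finitely generated, $I := \Ann_A M$ is of finite codimension by Lemma~\ref{lem:ideal-support-implies-codim}. As $(\g_0 \otimes I)M = 0$, Lemma~\ref{lem:kac-module-annihilator} yields $(\g \otimes I)V(M) = 0$, so $V(M)$ is a module over the finite dimensional Lie superalgebra $\g \otimes (A/I)$, generated over it by the image $\bar M$ of $M$. Using the PBW theorem (Lemma~\ref{lem:PBW}) for the decomposition $\g \otimes (A/I) = (\g_{-1} \otimes (A/I)) \oplus ((\g_0 \oplus \g_1) \otimes (A/I))$, together with the facts that $\g_1 \otimes A$ annihilates $\bar M$ and $\g_0 \otimes A$ preserves it, one obtains $V(M) = U(\g_{-1} \otimes (A/I))\,\bar M$. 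Since $\g$ is of type I (or $\g = \fsl(n,n)$) we have $[\g_{-1}, \g_{-1}] \subseteq \g_{-2} = 0$ in the distinguished grading~\eqref{item:typeI-Zgrading}, so $\g_{-1} \otimes (A/I)$ is a purely odd abelian Lie superalgebra and hence $U(\g_{-1} \otimes (A/I))$ is finite dimensional by Lemma~\ref{lem:PBW}. As $A/I$ and $M$ are finite dimensional, so is $V(M)$. (Alternatively, one can read off from the proof of Lemma~\ref{lem:kac-module-annihilator} that $N(M) \supseteq (\g \otimes I)\bar V(M) = (\g_{-1} \otimes I)\bar V(M)$, so that $V(M)$ is a quotient of $\bar V(M)/(\g_{-1} \otimes I)\bar V(M) \cong U(\g_{-1} \otimes (A/I)) \otimes M$.)

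The whole argument is bookkeeping on top of Lemma~\ref{lem:kac-module-annihilator}; the only point needing care is the finite-dimensionality of $U(\g_{-1} \otimes (A/I))$, which is precisely where the type I hypothesis $\g_{-2} = 0$ is used. I anticipate no genuine obstacle.
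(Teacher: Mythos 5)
Your argument is correct and follows essentially the same route as the paper's proof: both directions reduce to the finite-support criterion of Proposition~\ref{prop:gen-eval-finite-support}, passing between $M$ and $V(M)$ via Lemma~\ref{lem:kac-module-annihilator}, and finite dimensionality of $V(M)$ is deduced from the PBW basis $U(\g_{-1} \otimes (A/I)) \otimes_{\kk} M$ together with the fact that $\g_{-1} \otimes (A/I)$ is odd and finite dimensional. The only cosmetic difference is the order in which the lemmas are invoked in the forward direction (the paper concludes that $V(M)$ is a generalized evaluation module and then passes to $M$ via Lemma~\ref{lem:kac-module-annihilator}(c), whereas you pass to the annihilator of $M$ first), and your emphasis on $\g_{-1}$ being abelian, which is not actually needed — the type I hypothesis is used only to guarantee that $\g_{-1}$ is purely odd, which is all Lemma~\ref{lem:PBW} requires.
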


\begin{proof}
  Suppose $V(M)$ is finite dimensional.  By Corollary~\ref{cor:finite-codim-annihilator}, $(\g \otimes I)V(M)=0$ for some ideal $I$ of $A$ of finite codimension (and hence of finite support by Lemma~\ref{lem:ideal-codim-implies-support}).  Thus, by Proposition~\ref{prop:gen-eval-finite-support}\eqref{prop-item:gen-eval-rep-iff-finite-support}, $V(M)$ is a generalized evaluation module and, by Lemma~\ref{lem:kac-module-annihilator}, so is $M$.

  Now suppose that $M$ is a generalized evaluation module.  Then, by Proposition~\ref{prop:gen-eval-finite-support}\eqref{prop-item:gen-eval-rep-iff-finite-support}, there exists an ideal $I$ of $A$ of finite support such that $(\g \otimes I)M=0$.  Since $A$ is finitely generated, $I$ has finite codimension by Lemma~\ref{lem:ideal-support-implies-codim}.  By Lemma~\ref{lem:kac-module-annihilator}, we also have $(\g \otimes I)V(M)=0$.  It follows from Definition~\ref{def:V(M)} that $V(M)$ is a quotient of $(\g \otimes (A/I)) \otimes_{U((\g_0 \oplus \g_1) \otimes (A/I))} M$, which is isomorphic to $U(\g_{-1} \otimes (A/I)) \otimes_\kk M$ as a vector space.  Since $\g_{-1} \otimes (A/I)$ is odd and finite dimensional, $U(\g_{-1} \otimes (A/I))$ is finite dimensional by the PBW Theorem (Lemma~\ref{lem:PBW}).  Hence $V(M)$ is also finite dimensional.
\end{proof}

\begin{lem} \label{lem:V(M)-product}
  If $M_1,M_2$ are generalized evaluation $(\g_0 \otimes A)$-modules with disjoint supports, then $V(M_1 \otimes M_2) \cong V(M_1) \otimes V(M_2)$.
\end{lem}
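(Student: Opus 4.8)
The plan is to build a surjection from the induced module $\bar V(M_1 \otimes M_2)$ onto $V(M_1) \otimes V(M_2)$ via Frobenius reciprocity, show it annihilates $N(M_1 \otimes M_2)$, and conclude by irreducibility. To set up, note first that $M_1 \otimes M_2$ is an irreducible finite dimensional $(\g_0 \otimes A)$-module by Corollary~\ref{cor:prod-fd-irr-is-irr} (applied to the reductive Lie algebra $\g_0$, using that $M_1, M_2$ have disjoint supports), so $V(M_1 \otimes M_2)$ is defined. Inside each $V(M_i)$, let $\bar M_i$ denote the image of $1 \otimes M_i$; since $N(M_i) \cap (1 \otimes M_i) = 0$, the subspace $\bar M_i$ is isomorphic to $M_i$ as a $(\g_0 \oplus \g_1) \otimes A$-module, so in particular $\g_1 \otimes A$ acts on $\bar M_i$ by zero. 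Hence $\bar M_1 \otimes \bar M_2 \subseteq V(M_1) \otimes V(M_2)$ is a $(\g_0 \oplus \g_1) \otimes A$-submodule on which $\g_0 \otimes A$ acts diagonally and $\g_1 \otimes A$ acts by zero; that is, $\bar M_1 \otimes \bar M_2$ is isomorphic to $M_1 \otimes M_2$ regarded as the inflated $(\g_0 \oplus \g_1) \otimes A$-module appearing in Definition~\ref{def:V(M)}.

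Next, the universal property of $\bar V(M_1 \otimes M_2) = U(\g \otimes A) \otimes_{U((\g_0 \oplus \g_1) \otimes A)} (M_1 \otimes M_2)$ produces a $(\g \otimes A)$-module homomorphism $\phi \colon \bar V(M_1 \otimes M_2) \to V(M_1) \otimes V(M_2)$ extending the composite $M_1 \otimes M_2 \xrightarrow{\sim} \bar M_1 \otimes \bar M_2 \hookrightarrow V(M_1) \otimes V(M_2)$. Each $V(M_i)$ is irreducible and, since $M_i$ is a generalized evaluation module, finite dimensional by Proposition~\ref{prop:fd-Kac-vs-gen-eval}; by Lemma~\ref{lem:kac-module-annihilator}, $\Supp_A V(M_i) = \Supp_A M_i$, so $V(M_1)$ and $V(M_2)$ have disjoint supports and $V(M_1) \otimes V(M_2)$ is irreducible by Corollary~\ref{cor:prod-fd-irr-is-irr}. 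Since $\im \phi$ contains the nonzero submodule $\bar M_1 \otimes \bar M_2$, the map $\phi$ is surjective. On the other hand $\phi$ restricts to the injection $M_1 \otimes M_2 \hookrightarrow V(M_1) \otimes V(M_2)$ on $1 \otimes (M_1 \otimes M_2)$, so $\ker \phi$ meets $1 \otimes (M_1 \otimes M_2)$ trivially and is thus a proper submodule of $\bar V(M_1 \otimes M_2)$; hence $\ker \phi \subseteq N(M_1 \otimes M_2)$ by the characterization of $N(M_1 \otimes M_2)$ in Definition~\ref{def:V(M)}. Therefore $\phi$ descends to a nonzero, hence surjective, homomorphism $V(M_1 \otimes M_2) \to V(M_1) \otimes V(M_2)$, which is an isomorphism because $V(M_1 \otimes M_2)$ is irreducible.

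The only step that is not purely formal is the verification in the first paragraph that $\bar M_1 \otimes \bar M_2$ genuinely carries the structure of the inflated module $M_1 \otimes M_2$: one must check that triviality of the $\g_1 \otimes A$-action is inherited by the quotient $V(M_i)$ of $\bar V(M_i)$ and then by the (super) tensor product, and that the diagonal $\g_0 \otimes A$-action on $\bar M_1 \otimes \bar M_2$ matches the one defining $M_1 \otimes M_2$. Everything after that is the standard argument identifying an irreducible quotient of an induced module, so I do not anticipate any real obstacle.
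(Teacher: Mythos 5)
Your proof is correct in substance and takes a genuinely different route from the paper.  The paper reduces modulo the ideal $I = I_1I_2$, uses $A/I \cong A/I_1 \oplus A/I_2$ to exhibit a direct isomorphism ${\bar V}(M_1 \otimes M_2) \cong {\bar V}(M_1) \otimes {\bar V}(M_2)$ of $(\g \otimes A)$-modules, and then matches up the maximal proper submodules.  You instead build, via Frobenius reciprocity, a single map $\phi : {\bar V}(M_1 \otimes M_2) \to V(M_1) \otimes V(M_2)$, prove the target is irreducible (using Lemma~\ref{lem:kac-module-annihilator} to transfer disjointness of supports and Corollary~\ref{cor:prod-fd-irr-is-irr}), and conclude by comparing kernels with $N(M_1 \otimes M_2)$.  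Your approach is arguably cleaner: it bypasses the computation of the maximal proper submodule of a tensor product of induced modules (which the paper handles somewhat informally).  One small logical wrinkle in your last step: from $\ker\phi \subseteq N(M_1 \otimes M_2)$ alone, $\phi$ does \emph{not} automatically descend to ${\bar V}(M_1 \otimes M_2)/N(M_1 \otimes M_2)$; the correct inference is that since $\phi$ is surjective onto the irreducible module $V(M_1) \otimes V(M_2)$, the submodule $\ker\phi$ is maximal proper, and therefore $\ker\phi = N(M_1 \otimes M_2)$ by uniqueness of the maximal proper submodule; the isomorphism then follows immediately.  (Equivalently, $\ker\phi \subseteq N$ gives a surjection $V(M_1) \otimes V(M_2) \cong {\bar V}/\ker\phi \twoheadrightarrow {\bar V}/N = V(M_1 \otimes M_2)$, which must be an isomorphism by irreducibility of the source.)  With that sentence tightened, the argument is complete.
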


\begin{proof}
  Suppose $M_1,M_2$ are generalized evaluation $(\g_0 \otimes A)$-modules with disjoint supports.  Then there exist ideals $I_1,I_2$ with disjoint supports such that $(\g_0 \otimes I_i)M_i=0$ for $i=1,2$.  Let $I=I_1I_2 = I_1 \cap I_2$ (see Lemma~\ref{lem:ideal-product-intersection}) and $M=M_1 \otimes M_2$.  Then $(\g_0 \otimes I)M = 0$ and thus, by Lemma~\ref{lem:kac-module-annihilator}, $(\g \otimes I)V(M)=0$.  Similarly, $(\g \otimes I_i)V(M_i)=0$ for $i=1,2$.  We can therefore consider $V(M)$, $V(M_1)$, and $V(M_2)$ as modules for $(\g \otimes (A/I))$, $(\g \otimes (A/I_1))$ and $(\g \otimes (A/I_2))$, respectively.  We have
  \begin{align*}
    {\bar V}(M) &= U(\g \otimes (A/I)) \otimes_{U((\g_0 \oplus \g_1) \otimes (A/I))} M \\
    &\cong U(\g \otimes (A/I_1 \oplus A/I_2)) \otimes_{U((\g_0 \oplus \g_1) \otimes (A/I_1 \oplus A/I_2))} (M_1 \otimes M_2) \\
    &\cong \left( U(\g \otimes (A/I_1)) \otimes U(\g \otimes (A/I_2)) \right) \otimes_{U((\g_0 \oplus \g_1) \otimes (A/I_1)) \otimes U((\g_0 \oplus \g_1) \otimes (A/I_2))} (M_1 \otimes M_2) \\
    &\cong \left(U(\g \otimes (A/I_1)) \otimes_{U((\g_0 \oplus \g_1) \otimes (A/I_1))} M_1 \right) \otimes \left(U(\g \otimes (A/I_2)) \otimes_{U((\g_0 \oplus \g_1) \otimes (A/I_2))} M_2 \right) \\
    &= {\bar V}(M_1) \otimes {\bar V}(M_2).
  \end{align*}
  Thus the maximal proper $(\g \otimes (A/I))$-submodule of ${\bar V}(M)$ is simply $(N(M_1) \otimes \bar V(M_2)) + (\bar V(M_1) \otimes N(M_2))$, where $N(M_i)$ is the maximal proper $(\g \otimes (A/I_i))$-submodule of ${\bar V}(M_i)$ for $i=1,2$.  Therefore $V(M) = ({\bar V}(M_1) \otimes {\bar V}(M_2))/\big( (N(M_1) \otimes \bar V(M_2)) + (\bar V(M_1) \otimes N(M_2)) \big) \cong ({\bar V}(M_1)/N(M_1)) \otimes ({\bar V}(M_2)/N(M_2)) = V(M_1) \otimes V(M_2)$ as desired.
\end{proof}

\begin{cor}
  For pairwise distinct $\sm_1,\dots,\sm_\ell \in X_\rat$, $n_1,\dots,n_\ell \in \N$, and irreducible finite dimensional $(\g_0 \otimes (A/\sm_i^{n_i}))$-modules $V_i$, $i=1,\dots,\ell$, we have
  \[ \ts
    V(\ev_{\sm_1^{n_1},\dots,\sm_\ell^{n_\ell}}(V_1,\dots,V_\ell)) \cong \bigotimes_{i=1}^\ell V(\ev_{\sm_i^{n_i}}(V_i)).
  \]
  Thus, if $M$ is a generalized evaluation $(\g_0 \otimes A)$-module, then $V(M)$ is a tensor product of modules of the form $V(M')$, where $M'$ is a single point generalized evaluation $(\g_0 \otimes A)$-module.  In particular, this is the case if $V(M)$ is finite dimensional.
\end{cor}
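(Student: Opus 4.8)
The plan is to derive the first isomorphism from Lemma~\ref{lem:V(M)-product} by a short induction, after reducing the multi-point evaluation module to a tensor product of single-point ones via the remark following Definition~\ref{def:gen-eval-rep}; the two concluding assertions of the corollary will then follow formally from Propositions~\ref{prop:gen-eval-finite-support} and~\ref{prop:fd-Kac-vs-gen-eval}.

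First I would record the relevant properties of the single-point factors. By the remark following Definition~\ref{def:gen-eval-rep} we have $\ev_{\sm_1^{n_1},\dots,\sm_\ell^{n_\ell}}(V_1,\dots,V_\ell) = \bigotimes_{i=1}^\ell W_i$, where $W_i := \ev_{\sm_i^{n_i}}(V_i)$. Each $W_i$ is an irreducible finite dimensional generalized evaluation $(\g_0 \otimes A)$-module: it is a generalized evaluation module by Definition~\ref{def:gen-eval-rep}, it is finite dimensional because $V_i$ is, and it is irreducible because it is the pullback of the irreducible module $V_i$ along the surjection $\g_0 \otimes A \twoheadrightarrow \g_0 \otimes (A/\sm_i^{n_i})$. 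Moreover $\g_0 \otimes \sm_i^{n_i}$ annihilates $W_i$, so $\Ann_A W_i \supseteq \sm_i^{n_i}$ and hence $\Supp_A W_i \subseteq \Supp(\sm_i^{n_i}) = \{\sm_i\}$; since the $\sm_i$ are pairwise distinct, the $W_i$ have pairwise disjoint supports.

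Next I would carry out a downward induction on $k$ for the partial products $M^{(k)} := \bigotimes_{i=k}^\ell W_i$, each of which equals $\ev_{\sm_k^{n_k},\dots,\sm_\ell^{n_\ell}}(V_k,\dots,V_\ell)$ by the remark above and has $\Supp_A M^{(k)} \subseteq \{\sm_k,\dots,\sm_\ell\}$. The base case $M^{(\ell)} = W_\ell$ is irreducible and finite dimensional by the previous paragraph. For $k < \ell$, assuming $M^{(k+1)}$ irreducible and finite dimensional, the decomposition $M^{(k)} = W_k \otimes M^{(k+1)}$ together with $\Supp_A W_k \cap \Supp_A M^{(k+1)} = \emptyset$ shows via Corollary~\ref{cor:prod-fd-irr-is-irr} that $M^{(k)}$ is irreducible and finite dimensional, and then via Lemma~\ref{lem:V(M)-product} that $V(M^{(k)}) \cong V(W_k) \otimes V(M^{(k+1)})$. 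Iterating down to $k = 1$ yields $V(\ev_{\sm_1^{n_1},\dots,\sm_\ell^{n_\ell}}(V_1,\dots,V_\ell)) = V(M^{(1)}) \cong \bigotimes_{i=1}^\ell V(W_i)$, which is the first assertion. I regard the only mild obstacle as keeping track, at each step, that $M^{(k)}$ satisfies the hypotheses of Lemma~\ref{lem:V(M)-product} --- irreducible, finite dimensional, and a generalized evaluation module --- which the bookkeeping above, combined with Corollary~\ref{cor:prod-fd-irr-is-irr} and the tensor-product identity, supplies; everything else is routine.

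Finally, for the last two sentences of the corollary: if $M$ is an irreducible finite dimensional generalized evaluation $(\g_0 \otimes A)$-module, then by the definition of a generalized evaluation module together with Corollary~\ref{cor:irreds-for-tensor-prods} (this is precisely the argument proving Proposition~\ref{prop:gen-eval-finite-support}\eqref{prop-item:gen-eval-rep-iff-finite-support}, applied to the reductive Lie algebra $\g_0$) we may write $M \cong \ev_{\sm_1^{n_1},\dots,\sm_\ell^{n_\ell}}(V_1,\dots,V_\ell)$ with pairwise distinct $\sm_i$ and the $V_i$ irreducible finite dimensional $(\g_0 \otimes (A/\sm_i^{n_i}))$-modules; the first part then exhibits $V(M)$ as a tensor product of the single-point modules $V(\ev_{\sm_i^{n_i}}(V_i))$. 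And if $V(M)$ is finite dimensional then $M$ is a generalized evaluation module by Proposition~\ref{prop:fd-Kac-vs-gen-eval}, so this last conclusion applies, completing the plan.
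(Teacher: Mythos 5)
Your proof is correct and follows essentially the same approach as the paper: factor the multi-point evaluation module as a tensor product of single-point ones (via the remark after Definition~\ref{def:gen-eval-rep}), apply Lemma~\ref{lem:V(M)-product} by induction, and deduce the remaining assertions from Propositions~\ref{prop:gen-eval-finite-support} and~\ref{prop:fd-Kac-vs-gen-eval}. You have merely spelled out the bookkeeping (disjointness of the supports of the $W_i$, irreducibility of the partial products via Corollary~\ref{cor:prod-fd-irr-is-irr}, and the use of Corollary~\ref{cor:irreds-for-tensor-prods} to write $M$ with irreducible factors) that the paper leaves implicit.
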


\begin{proof}
  Since $\ev_{\sm_1^{n_1},\dots,\sm_\ell^{n_\ell}}(V_1,\dots,V_\ell)) \cong \bigotimes_{i=1}^\ell \ev_{\sm_i^{n_i}}(V_i)$, the first result follows from Lemma~\ref{lem:V(M)-product} by induction.  The last statement follows from Proposition~\ref{prop:fd-Kac-vs-gen-eval}.
\end{proof}

\begin{rem}
  If $A=\kk$, Kac modules are also defined in the case that $\g$ is of type II, although the definition is slightly more complicated (see \cite[Thm.~8]{Kac77}, \cite[p.~613]{Kac78}, or \cite[\S2.35]{FSS00}).  However, for the purposes of our classification, we do not need the generalized notion of Kac modules in type II.
\end{rem}

%
\section{Classification of finite dimensional irreducible untwisted modules} \label{sec:classification-untwisted}
%

In this section we classify the irreducible finite dimensional modules for map superalgebras.  We assume that $\g$ is a basic classical Lie superalgebra or $\mathfrak{sl}(n,n)$, $n \ge 1$, and that $A$ is finitely generated.

\begin{theo} \label{thm:untwisted-classification-g0-semisimple}
  If $\g$ is a basic classical Lie superalgebra with $\g_{\bar 0}$ semisimple, then we have a bijection
  \begin{equation} \label{eq:classification-g0-semisimple}
    \cE(X,\g) \to \cR(X,\g),\quad \Psi \mapsto \ev_\Psi,
  \end{equation}
  where $\cR(X,\g)$ is the set of isomorphism classes of irreducible finite dimensional representations of $\g \otimes A$.  In particular, all irreducible finite dimensional representations are evaluation representations.
\end{theo}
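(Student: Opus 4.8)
The plan is to reduce the statement to the claim that \emph{every} irreducible finite dimensional $(\g \otimes A)$-module is an evaluation module; granting this, the asserted bijection is immediate, since Proposition~\ref{prop:E-enumerates} (applied with $\Gamma$ trivial) already tells us that $\Psi \mapsto \ev_\Psi$ is a bijection from $\cE(X,\g)$ onto the set of isomorphism classes of irreducible \emph{evaluation} representations of $\g \otimes A$, and $\cR(X,\g)$ is by definition the set of all isomorphism classes of irreducible finite dimensional representations. So let $V$ be irreducible and finite dimensional. By Corollary~\ref{cor:finite-codim-annihilator} there is an ideal $I \subseteq A$ of finite codimension with $(\g \otimes I)V = 0$; then $I$ has finite support by Lemma~\ref{lem:ideal-codim-implies-support}, so $V$ is at least a generalized evaluation module by Proposition~\ref{prop:gen-eval-finite-support}\eqref{prop-item:gen-eval-rep-iff-finite-support}. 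Since $A$ is Noetherian, Lemma~\ref{lem:ideal-contains-power-of-radical} gives $\prod_{j=1}^\ell \sm_j^k \subseteq I$ for some pairwise distinct $\sm_1,\dots,\sm_\ell \in \maxSpec A = X_\rat$ and some $k \in \N$; by the Chinese Remainder Theorem $V$ is then a module for $\g \otimes \big(A/\prod_j \sm_j^k\big) \cong \bigoplus_{j=1}^\ell \big(\g \otimes (A/\sm_j^k)\big)$, and iterating Corollary~\ref{cor:irreds-for-tensor-prods} (noting that a partial direct sum of the summands is again of the form $\g \otimes A'$) yields $V \cong \bigotimes_{j=1}^\ell V_j$ with each $V_j$ an irreducible finite dimensional $\big(\g \otimes (A/\sm_j^k)\big)$-module. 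It therefore suffices to prove that each $V_j$ factors through the projection $\g \otimes (A/\sm_j^k) \twoheadrightarrow \g \otimes (A/\sm_j) \cong \g$.

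This single-point statement is the heart of the matter. Fix $\sm \in X_\rat$, write $R = A/\sm^k$, a local Artinian $\kk$-algebra whose maximal ideal $\mathfrak m = \sm/\sm^k$ is nilpotent and satisfies $R/\mathfrak m = \kk$, and set $L = \g \otimes R$, $\mathfrak I = \g \otimes \mathfrak m$; then $\mathfrak I$ is an ideal of $L$ with $L/\mathfrak I \cong \g$, and I must show $\mathfrak I W = 0$ for every irreducible finite dimensional $L$-module $W$. The key point is that every homogeneous element of $\mathfrak I$ acts nilpotently on $W$. To see this for the even part, I restrict $W$ to $L_{\bar 0} = \g_{\bar 0} \otimes R$ and consider the ideal $\mathfrak a := \g_{\bar 0} \otimes \mathfrak m$ of $L_{\bar 0}$. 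Since $\g_{\bar 0}$ is semisimple, hence perfect, the $i$-th derived subalgebra of $\mathfrak a$ is $\g_{\bar 0} \otimes \mathfrak m^{2^i}$, which vanishes for $i \gg 0$; thus $\mathfrak a$ is solvable. On any composition factor of $W$ as an $L_{\bar 0}$-module, Lie's theorem together with its invariance refinement shows that $\mathfrak a$ acts by a character $\chi$ vanishing on $[L_{\bar 0}, \mathfrak a] = [\g_{\bar 0}, \g_{\bar 0}] \otimes (R \cdot \mathfrak m) = \g_{\bar 0} \otimes \mathfrak m = \mathfrak a$, so $\chi = 0$. Hence $\mathfrak a$ acts by zero on every $L_{\bar 0}$-composition factor of $W$, so every element of $\mathfrak a$ acts nilpotently on $W$. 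For a homogeneous odd element $z \in \g_{\bar 1} \otimes \mathfrak m$ one then has $z^2 = \tfrac12 [z,z] \in \g_{\bar 0} \otimes \mathfrak m^2 \subseteq \mathfrak a$ acting nilpotently on $W$, whence $z$ acts nilpotently as well.

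By the Lie superalgebra analogue of Engel's theorem, there is then a nonzero $w \in W$ with $\mathfrak I w = 0$; since $\mathfrak I$ is an ideal of $L$ and $W$ is irreducible, Lemma~\ref{lem:annilating-ideal} forces $\mathfrak I W = 0$, which is exactly the desired single-point statement. Applying this to each $V_j$ and reassembling, $V$ factors through the evaluation map $\ev_{\sm_1,\dots,\sm_\ell}$, so $V \cong \ev_\Psi$ for the finitely supported $\Psi \in \cE(X,\g)$ sending $\sm_j$ to the isomorphism class of the irreducible $\g$-module underlying $V_j$ (and sending every other point to $0$). Combined with Proposition~\ref{prop:E-enumerates}, this shows that $\Psi \mapsto \ev_\Psi$ is a bijection $\cE(X,\g) \to \cR(X,\g)$ and that every irreducible finite dimensional representation is an evaluation representation.

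I expect the main obstacle to be exactly the single-point statement, and within it the fact that $W|_{L_{\bar 0}}$ need not be semisimple (the even part $L_{\bar 0} = \g_{\bar 0} \otimes R$ is far from semisimple once $\mathfrak m \ne 0$). This is precisely why one cannot simply split off the ideal $\mathfrak I$, and must instead route the argument through the nilpotency of the $\mathfrak I$-action and the super analogue of Engel's theorem; using that $\g_{\bar 0}$ is semisimple (equivalently, perfect) to kill the character $\chi$ is the one place where the hypothesis of the theorem really enters, and it is what fails for targets such as $A(m,n)$ or $\mathfrak{sl}(n,n)$.
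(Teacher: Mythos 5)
Your proof is correct, and it takes a genuinely different route from the paper's. The paper works \emph{globally}: it sets $J = \rad I$ for the finite-codimension ideal $I$ given by Corollary~\ref{cor:finite-codim-annihilator}, observes that $\g \otimes (J/I)$ is a solvable Lie superalgebra and that it satisfies the condition $[\frL_{\bar 1},\frL_{\bar 1}] \subseteq [\frL_{\bar 0},\frL_{\bar 0}]$ of Kac's criterion (Lemma~\ref{lem:fd-reps-solvable}), deduces that $V$ contains a $\g \otimes (J/I)$-eigenvector with eigencharacter $\theta$, and then shows $\theta = 0$ by a two-pronged argument (weight-space finiteness for $\n^\pm \otimes J$, semisimplicity for the Cartan part). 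You instead reduce to the single-point case via the Chinese Remainder Theorem and Corollary~\ref{cor:irreds-for-tensor-prods}, establish directly that $\g_{\bar 0} \otimes \mathfrak m$ acts nilpotently on each composition factor (Lie's theorem plus the Invariance Lemma, using $\g_{\bar 0}$ perfect to force $[L_{\bar 0},\mathfrak a] = \mathfrak a$), lift nilpotency to odd elements by squaring, and invoke the superalgebra version of Engel's theorem. Both arguments exploit the semisimplicity of $\g_{\bar 0}$ at exactly the same logical pressure point --- killing a character by perfectness --- and both finish with Lemma~\ref{lem:annilating-ideal}. The main trade-off is that your approach replaces Kac's solvable-superalgebra criterion with super-Engel and avoids the somewhat delicate character-vanishing argument at the end of the paper's proof (at the cost of first passing through the single-point reduction, which the paper doesn't need), so it is arguably a bit more self-contained. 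You should, however, cite a reference for the super Engel theorem in the form you use it (e.g.\ Scheunert, \emph{The Theory of Lie Superalgebras}, Ch.~II): the hypothesis there is that every element of the even part acts nilpotently, which is precisely what you establish.
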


\begin{proof}
  The map~\eqref{eq:classification-g0-semisimple} is injective by Proposition~\ref{prop:E-enumerates}.  To prove surjectivity, it suffices, by Lemma~\ref{lem:ideal-codim-implies-support} and Proposition~\ref{prop:gen-eval-finite-support}\eqref{prop-item:eval-rep-iff-radical-annihilator}, to show that for any irreducible finite dimensional $(\g \otimes A)$-module $V$, we have $(\g \otimes J)V=0$ for some radical ideal $J$ of $A$ of finite codimension.

  By Corollary~\ref{cor:finite-codim-annihilator}, $(\g \otimes I)V = 0$ for some ideal $I$ of $A$ of finite codimension.  Let $J = \rad I$.  We claim that $(\g \otimes J)V =0$.  By Lemma~\ref{lem:annilating-ideal}, it is enough to show that $\g \otimes J$ annihilates some nonzero vector of $V$.

  We can naturally consider $V$ as a $(\g \otimes (A/I))$-module.  It then suffices to show that $\g \otimes (J/I)$ annihilates some nonzero vector of $V$.  Since $A$ is Noetherian, some power of $J$ is contained in $I$ by Lemma~\ref{lem:ideal-contains-power-of-radical}.  Thus $\g \otimes (J/I)$ is solvable.  Since
  \begin{multline*}
    [(\g \otimes (J/I))_{\bar 1},(\g \otimes (J/I))_{\bar 1}] = [\g_{\bar 1},\g_{\bar 1}] \otimes (J^2/I) \\
    \subseteq \g_{\bar 0} \otimes (J^2/I) = [\g_{\bar 0},\g_{\bar 0}] \otimes (J^2/I) = [(\g \otimes (J/I))_{\bar 0}, (\g \otimes (J/I))_{\bar 0}],
  \end{multline*}
  it follows from Lemma~\ref{lem:fd-reps-solvable} that $V$ has a one-dimensional $(\g \otimes (J/I))$-invariant subspace.  Thus, there exists a nonzero vector $w \in V$ and $\theta \in (\g \otimes J)^*$ such that
  \[
    \mu w = \theta(\mu) w,\quad \forall\ \mu \in \g \otimes J.
  \]
  We claim that $\theta=0$.  For $\mu \in \n^{\pm} \otimes J$, we have $\theta(\mu)^m w = \mu^mw = 0$ for $m$ sufficiently large (since $V$ has a finite number of nonzero weight spaces).  Thus $\theta(\n^{\pm} \otimes J)=0$.  It remains to show that $\theta(\h \otimes J)=0$.  Denote the restriction of $\theta$ to $\g_{\bar 0} \otimes J$ by $\theta'$.  Then $\theta'$ defines a one-dimensional representation of the Lie algebra $\g_{\bar 0} \otimes J$.  Thus the kernel of $\theta'$ must be an ideal of $\g_{\bar 0} \otimes J$ of codimension at most one.  Since $\g_{\bar 0}$ is semisimple, it easily follows that this kernel must, in fact, be all of $\g_{\bar 0} \otimes J$.  Hence $\theta'=0$.  The fact that $\h \subseteq \g_{\bar 0}$ thus implies that $\theta(\h)=0$ as desired.
\end{proof}

Now suppose $\g$ is either a basic Lie superalgebra of type I or $\g = \fsl(n,n)$, $n \ge 1$.  Let $\g_{\bar 0}^\rss = [\g_{\bar 0}, \g_{\bar 0}]$ be the semisimple part of $\g_{\bar 0}$, and let $\g_{\bar 0}^\ab$ be its center.  Thus $\g_{\bar 0} = \g_{\bar 0}^\rss \oplus \g_{\bar 0}^\ab$.  Recall from Section~\ref{subsec:Lie-superalg} that we have the distinguished $\Z$-grading
\[
  \g = \g_{-1} \oplus \g_0 \oplus \g_1
\]
of $\g$ compatible with the $\Z_2$-grading and such that
\[
  \g_{\bar 0} = \g_0,\quad \g_{\bar 1} = \g_{-1} \oplus \g_1.
\]
We also have, using the choices of Section~\ref{subsec:Lie-superalg}, that $\g_1$ is a sum of positive (odd) root spaces, and $\g_{-1}$ is a sum of negative (odd) root spaces.

\begin{theo} \label{thm:untwisted-classification-type-I}
  If $\g$ is a basic Lie superalgebra of type I or $\g = \mathfrak{sl}(n,n)$, $n \ge 1$, then every irreducible finite dimensional $(\g \otimes A)$-module is of the form $V(M)$ for an irreducible generalized evaluation $(\g_{\bar 0} \otimes A)$-module $M$.  We thus have a natural bijection
  \begin{equation} \label{eq:g0-not-ss-bijection}
    \cL(X,\g_{\bar 0}^\ab) \times \cE(X,\g_{\bar 0}^\rss) \to \cR(X,\g),\quad (\theta, \Psi) \mapsto V(\theta \otimes \ev_\Psi),
  \end{equation}
  where $\cR(X,\g)$ is the set of isomorphism classes of irreducible finite dimensional $(\g \otimes A)$-modules.
\end{theo}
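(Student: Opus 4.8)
The plan is to prove the displayed bijection in two moves. The first, and only substantial, move is to show that \emph{every} irreducible finite dimensional $(\g \otimes A)$-module $V$ is isomorphic to $V(M)$ for some irreducible generalized evaluation $(\g_{\bar 0} \otimes A)$-module $M$. The second move then assembles~\eqref{eq:g0-not-ss-bijection} formally from this together with Definition~\ref{def:V(M)}, Proposition~\ref{prop:fd-Kac-vs-gen-eval}, and Proposition~\ref{prop:classification-map-alg-modules}.

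For the first move I would argue as follows. By Corollary~\ref{cor:finite-codim-annihilator} there is an ideal $I$ of $A$ of finite codimension with $(\g \otimes I)V = 0$, so it does no harm to assume $A$ is finite dimensional. Work with the distinguished $\Z$-grading $\g = \g_{-1} \oplus \g_0 \oplus \g_1$ of Section~\ref{subsec:Lie-superalg}, recalling that $\g_0 = \g_{\bar 0}$ and that $\g_1$ is \emph{odd and abelian} (since $\g_2 = 0$ in type~I and likewise for $\fsl(n,n)$). Set
\[
  M := \{v \in V \mid (\g_1 \otimes A)v = 0\}.
\]
First I would check $M \ne 0$: by the PBW Theorem (Lemma~\ref{lem:PBW}) the algebra $U(\g_1 \otimes A)$ is finite dimensional, so its augmentation ideal $\mathfrak J$ is nilpotent; taking $j$ maximal with $\mathfrak J^j V \ne 0$, every nonzero vector of $\mathfrak J^j V$ is annihilated by $\g_1 \otimes A$ and hence lies in $M$. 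Next, since $[\g_0, \g_1] \subseteq \g_1$ and $\g_0 \otimes A$ is purely even, a one-line bracket computation shows that $M$ is a $(\g_0 \otimes A)$-submodule of $V$; being nonzero and finite dimensional it contains an irreducible $(\g_0 \otimes A)$-submodule $M_0$.

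To finish the first move I would use Frobenius reciprocity. Regard $M_0$ as a $((\g_0 \oplus \g_1) \otimes A)$-module with $\g_1 \otimes A$ acting by zero (consistent with $M_0 \subseteq M$); then the inclusion $M_0 \hookrightarrow V$ corresponds, via adjunction, to a $(\g \otimes A)$-module homomorphism ${\bar V}(M_0) \to V$ sending $u \otimes m \mapsto um$. This homomorphism is nonzero, hence surjective because $V$ is irreducible, so its kernel is a maximal proper submodule of ${\bar V}(M_0)$; by the uniqueness of such a submodule (Definition~\ref{def:V(M)}) the kernel is $N(M_0)$, and therefore $V \cong {\bar V}(M_0)/N(M_0) = V(M_0)$. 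Finally $V(M_0) \cong V$ is finite dimensional, so $M_0$ is a generalized evaluation module by Proposition~\ref{prop:fd-Kac-vs-gen-eval}, and it is irreducible by construction; this is what the first move claimed. The second move is then immediate: Definition~\ref{def:V(M)} gives $V(M_1) \cong V(M_2) \iff M_1 \cong M_2$ and Proposition~\ref{prop:fd-Kac-vs-gen-eval} says $V(M)$ is finite dimensional exactly when $M$ is a generalized evaluation module, so $M \mapsto V(M)$ is a bijection from isomorphism classes of irreducible finite dimensional generalized evaluation $(\g_{\bar 0} \otimes A)$-modules onto $\cR(X,\g)$ (surjectivity being the first move); precomposing with the bijection $(\theta, \Psi) \mapsto \theta \otimes \ev_\Psi$ of Proposition~\ref{prop:classification-map-alg-modules} yields~\eqref{eq:g0-not-ss-bijection}.

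The step I expect to be the crux is the construction and analysis of $M$ --- establishing that $\g_1 \otimes A$ acts nilpotently on $V$ (so $M \ne 0$) and that $M$ is stable under $\g_0 \otimes A$. This is exactly where the type~I hypothesis enters: in type~II one has $\g_2 \ne 0$, $\g_1$ is not abelian, and $(\g_0 \oplus \g_1) \otimes A$ is not the right subalgebra to induce from, which is why the theorem is confined to the type~I and $\fsl(n,n)$ cases. Everything downstream of the construction of $M_0$ --- Frobenius reciprocity, the universal property of ${\bar V}(M_0)$, and the bookkeeping that yields the bijection --- is routine given the results of Sections~\ref{sec:eval-rep} and~\ref{sec:kac-modules}.
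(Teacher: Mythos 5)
Your proof is correct, and it diverges from the paper's at the one substantive step: how to produce the irreducible $(\g_{\bar 0}\otimes A)$-submodule of $V$ that is killed by $\g_1 \otimes A$. The paper starts from the highest weight classification $V \cong V(\psi)$, fixes a highest weight vector $v$, defines $M = U(\g_{\bar 0}\otimes A)v$, observes $(\g_1\otimes A)M=0$ because $\g_1 \subseteq \n^+$ in the chosen Borel, and then establishes irreducibility of $M$ by a density-theorem argument. You instead take the full space of $\g_1\otimes A$-singular vectors $M = \{v \in V \mid (\g_1\otimes A)v=0\}$, show it is nonzero via nilpotency, show it is $(\g_0\otimes A)$-stable, and simply choose an irreducible $(\g_0\otimes A)$-submodule $M_0 \subseteq M$. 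Both proofs then run Frobenius reciprocity through the Kac module ${\bar V}(M_0)$ to get $V \cong V(M_0)$, and both use Proposition~\ref{prop:fd-Kac-vs-gen-eval} and Proposition~\ref{prop:classification-map-alg-modules} to close out the bijection. Your route buys a cleaner, more self-contained identification of the inducing module (no appeal to the density theorem, no explicit reliance on the highest weight vector), at the cost of not exhibiting $M_0$ concretely as the cyclic $(\g_{\bar 0}\otimes A)$-module generated by the highest weight vector. One small imprecision worth tightening: the inference ``$U(\g_1\otimes A)$ is finite dimensional, \emph{so} its augmentation ideal is nilpotent'' is not valid for arbitrary finite-dimensional algebras; it holds here specifically because, after replacing $A$ by $A/I$, $\g_1\otimes (A/I)$ is a finite-dimensional \emph{odd abelian} Lie superalgebra so $U(\g_1\otimes (A/I))$ is an exterior algebra on a finite-dimensional space, whose augmentation ideal is visibly nilpotent. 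With that phrasing fixed, the argument is sound.
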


\begin{proof}
  Let $V$ be an irreducible finite dimensional $(\g \otimes A)$-module.  Thus $V \cong V(\psi)$ for some $\psi \in (\h \otimes A)^*$ by Lemma~\ref{lem:fd-implies-hw}.  Choose a highest weight vector $v$ and define
  \[
    M = U(\g_{\bar 0} \otimes A) v,
  \]
  a finite dimensional $(\g_{\bar 0} \otimes A)$-module.  Since $[\g_1 \otimes A, \g_{\bar 0} \otimes A] \subseteq \g_1 \otimes A$ and $(\g_1 \otimes A)v=0$, we have $(\g_1 \otimes A)M=0$.

  We claim that $M$ is irreducible as a $(\g_{\bar 0} \otimes A)$-module.  Let $\g_{\bar 0} = \n_{\bar 0}^+ \oplus \h \oplus \n_{\bar 0}^-$ be the triangular decomposition induced by the one on $\g$.  Then we have
  \[
    M = U(\n_{\bar 0}^- \otimes A) v.
  \]
  Let $w \in M$ be a weight vector.  Since $V(\psi)$ is irreducible as a $(\g \otimes A)$-module, by Proposition~\ref{prop:density-theorem} and the PBW Theorem (Lemma~\ref{lem:PBW}) there exists $X \in U(\n_{\bar 0}^+ \otimes A)U(\g_1 \otimes A)$ such that $Xw=v$.  Since $(\g_1 \otimes A)M=0$, we can in fact assume that $X \in U(\n_{\bar 0}^+ \otimes A)$.  Thus $M$ is irreducible as a $(\g_{\bar 0} \otimes A)$-module as desired.

  By Theorem~\ref{thm:hw-quasifinite-classification}, $(\g \otimes I)V=0$ for some ideal $I$ of $A$ of finite codimension (hence of finite support by Lemma~\ref{lem:ideal-codim-implies-support}).  This implies that $(\g_{\bar 0} \otimes I)M=0$ and hence, by Lemma~\ref{prop:gen-eval-finite-support}\eqref{prop-item:gen-eval-rep-iff-finite-support}, $M$ is a generalized evaluation $(\g_{\bar 0} \otimes A)$-module.

  It follows that $V$ is the irreducible quotient of $U(\g \otimes A) \otimes_{U((\g_0 \oplus \g_1) \otimes A)} M$, completing the proof of the first statement.

  By Lemma~\ref{lem:kac-module-annihilator}, every module of the form $V(M)$, for $M$ an irreducible generalized evaluation $(\g_{\bar 0} \otimes A)$-module, is an irreducible finite dimensional $(\g \otimes A)$-module.  The fact that the map~\eqref{eq:g0-not-ss-bijection} is a bijection then follows from Proposition~\ref{prop:classification-map-alg-modules}.
\end{proof}

\begin{rem} \label{rem:thm-overlap}
  Note that, taken together, Theorems~\ref{thm:untwisted-classification-g0-semisimple} and~\ref{thm:untwisted-classification-type-I} cover all basic classical Lie superalgebras $\g$.  Also, \emph{both} theorems apply when $\g = A(n,n)$, $n \ge 1$.  In this case we get two descriptions of the irreducible finite dimensional modules.
\end{rem}

%
\section{Classification of finite dimensional irreducible twisted modules} \label{sec:classification-equivariant}
%

We assume in this section that $\g$ is a basic classical Lie superalgebra (except in Proposition~\ref{prop:equivariant-ideal-form}) and that $A$ is finitely generated.  Let $\Gamma$ be a finite \emph{abelian} group acting on $\g$ and $X$ by automorphisms.  We assume that the action of $\Gamma$ on $X$ is free (i.e.\ the action on $X_\rat = \maxSpec A$ is free).  Let $\Xi$ be the character group of $\Gamma$. This is an abelian group, whose group operation we will write additively. Hence, $0$ is the character of the trivial one-dimensional representation, and if an irreducible representation affords the character $\xi$, then $-\xi$ is the character of the dual representation.

If $\Gamma$ acts on a superalgebra $B$ by automorphisms, it is well known that $B=\bigoplus_{\xi \in \Xi} B_\xi$ is a $\Xi$-grading, where $B_\xi$ is the isotypic component of type $\xi$. It follows that $(\g \otimes A)^\Gamma$ can be written as
\begin{equation} \label{eq:EMSA-grading}
  (\g \otimes A)^\Gamma = \ts \bigoplus_{\xi \in \Xi} \, \g_\xi \otimes A_{-\xi},
\end{equation}
since $\g = \bigoplus_\xi \g_\xi$ and $A=\bigoplus_\xi A_\xi$ are $\Xi$-graded and $(\g_\xi \otimes A_{\xi'})^\Gamma = 0$ if $\xi'\ne -\xi$. The decomposition \eqref{eq:EMSA-grading} is a superalgebra $\Xi$-grading.

\begin{prop} \label{prop:equivariant-ideal-form}
  Suppose $\g$ is a finite dimensional simple Lie superalgebra.  Then all ideals of $(\g \otimes A)^\Gamma$ are of the form $(\g \otimes I)^\Gamma = \bigoplus_{\xi \in \Xi} \g_\xi \otimes I_{-\xi}$, where $I = \bigoplus_{\xi \in \Xi} I_\xi$ is a $\Gamma$-invariant ideal of $A$.
\end{prop}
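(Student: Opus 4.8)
The plan is to deduce the statement from the corresponding fact for the untwisted algebra $\g\otimes A$ by generating ideals upstairs. Write $L:=(\g\otimes A)^\Gamma$ and let $J$ be an ideal of $L$. Let $\widetilde J$ denote the ideal of $\g\otimes A$ generated by $J$. Since $\g$ is simple, its adjoint representation is irreducible and $[\g,\g]=\g$; the usual argument then shows that every ideal of $\g\otimes A$ is of the form $\g\otimes I$ for an ideal $I$ of $A$ (regarding $\g\otimes A$ as a $\g$-module via the adjoint action on the first factor, any submodule has the shape $\g\otimes I'$ for a subspace $I'\subseteq A$ because $\g$ is irreducible, and the ideal condition together with perfectness of $\g$ forces $AI'\subseteq I'$, i.e.\ $I'$ to be an ideal). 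Hence $\widetilde J=\g\otimes I$. As $J$ is pointwise $\Gamma$-fixed, $\widetilde J$ is $\Gamma$-stable, and therefore so is $I$; thus $I=\bigoplus_\xi I_\xi$ is a $\Gamma$-invariant ideal of $A$, and $(\g\otimes I)^\Gamma=\bigoplus_\xi\g_\xi\otimes I_{-\xi}$ by \eqref{eq:EMSA-grading}. It then suffices to prove $J=(\g\otimes I)^\Gamma$.

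The inclusion $J\subseteq(\g\otimes I)^\Gamma=\widetilde J\cap L$ is clear. For the reverse inclusion I would exploit the $\Xi$-grading of $\g\otimes A$ and the averaging projection $P=\tfrac1{|\Gamma|}\sum_{\gamma\in\Gamma}\gamma\colon\g\otimes A\to L$. Because $\widetilde J$ is $\Gamma$-stable, $(\g\otimes I)^\Gamma=\widetilde J\cap L=P(\widetilde J)$, and $\widetilde J$ is spanned by iterated brackets $[y_n,[\cdots[y_1,j]\cdots]]$ with $y_i\in\g\otimes A$ and $j\in J$. Expanding each $y_i$ into $\Xi$-homogeneous pieces and using $\gamma j=j$, one finds that $P$ sends such a bracket to a sum of iterated brackets $[y_n^{(a_n)},[\cdots[y_1^{(a_1)},j]\cdots]]$ of homogeneous elements whose degrees satisfy $a_1+\cdots+a_n=0$. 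So it remains to show each of these lies in $J$. Here the standing hypothesis that $\Gamma$ acts freely on $X_\rat$ enters: it makes $A=\bigoplus_\xi A_\xi$ strongly $\Xi$-graded (so $A_\xi A_{-\xi}=A_0$), and, together with the refined perfectness $\sum_\eta[\g_\eta,\g_{\xi-\eta}]=\g_\xi$ provided by simplicity of $\g$, this should allow one to rewrite a degree-$0$ bracket of homogeneous elements as a combination of brackets of elements of $L$ with elements already known to lie in $J$. (Conceptually, this reverse inclusion is a descent statement along the $\Gamma$-torsor $X\to X/\Gamma$.)

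I expect the last step to be the main obstacle. In contrast with the untwisted case, the intermediate terms of $[y_n^{(a_n)},[\cdots[y_1^{(a_1)},j]\cdots]]$ need not lie in $L$ — they sit in nontrivial isotypic components of $\g\otimes A$ — so membership in $J$ cannot be read off from $[L,J]\subseteq J$ alone. The natural remedy is an induction on the bracket length $n$: if some proper partial sum $a_1+\cdots+a_k$ is $0$ one splits the bracket there and invokes the inductive hypothesis on both pieces, while if no proper partial sum vanishes one uses the Jacobi identity to fuse $[y_n^{(a_n)},y_{n-1}^{(a_{n-1})}]$ into a single homogeneous element, shortening the bracket at the cost of a correction term that must then be absorbed using the strong grading of $A$ and the simplicity of $\g$. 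Making this recursion terminate and verifying that all correction terms genuinely fall into $J$ is the technical core; everything preceding it is routine.
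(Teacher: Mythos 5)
Your reduction to the untwisted case---generating the ideal $\tilde J$ of $\g \otimes A$ from $J$, identifying $\tilde J = \g \otimes I$ with $I$ a $\Gamma$-invariant ideal of $A$, and observing $J \subseteq (\g \otimes I)^\Gamma = \tilde J \cap L$---is sound as far as it goes, but the reverse inclusion $\tilde J \cap L \subseteq J$ is precisely the content of the proposition, and you do not prove it. The induction on bracket length you sketch does not visibly terminate: in the case where no proper partial sum of the homogeneous degrees $a_1,\dots,a_n$ vanishes, applying Jacobi to fuse $[y_n^{(a_n)}, y_{n-1}^{(a_{n-1})}]$ into one homogeneous factor produces a shorter bracket \emph{plus} a correction term of the same length $n$, so the recursion does not obviously close, and it is never made precise how the strong grading of $A$ and the simplicity of $\g$ are to be fed into these rearrangements. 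You flag this yourself as the technical core; it is a genuine gap, not a routine verification.

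The paper avoids the descent problem entirely by never passing through $\g \otimes A$. Starting from $z = \sum_i u_i \otimes f_i \in J$ with $u_i \in \g_{\xi_i}$ linearly independent and $f_i \in A_{-\xi_i}$, it uses the density theorem for $U(\g)$ on the adjoint module to find $p \in U(\g)$ with $p u_j = u$ (for arbitrary $u \in \g_{\xi_j}$) and $p u_i = 0$ for $i \ne j$; because this condition is $\Xi$-homogeneous, $p$ can be taken of degree zero, so each monomial $\prod_\ell x_{k,\ell}$ has $x_{k,\ell} \in \g_{\zeta_{k,\ell}}$ with $\sum_\ell \zeta_{k,\ell} = 0$. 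Freeness of the $\Gamma$-action gives $\prod_\ell A_{-\zeta_{k,\ell}} = A_0$, so one can pick $g_{k,\ell,q} \in A_{-\zeta_{k,\ell}}$ with $\sum_q \prod_\ell g_{k,\ell,q} = 1$ and replace each $x_{k,\ell}$ by $x_{k,\ell}\otimes g_{k,\ell,q}$, producing $\tilde p \in U(L)$ with $\tilde p z = u \otimes f_j$. Since $J$ is an $L$-ideal and $\tilde p$ lies in $U(L)$, one concludes $u \otimes f_j \in J$ directly, and a second application of the same device shows the extracted subspaces $I_{-\xi}$ assemble into an ideal. Thus the two tools you correctly identify (the strong $\Xi$-grading of $A$ and density for simple $\g$) do carry the argument, but only when the enveloping element is built inside $U(L)$ from the start rather than by averaging an ideal generated upstairs.
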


\begin{proof}
  Let $\mathfrak{I}$ be an ideal of $(\g \otimes A)^\Gamma$ and let
  \[
    z = \sum_{i=1}^s u_i \otimes f_i,\quad 0 \ne u_i \in \g_{\xi_i},\ f_i \in A_{-\xi_i},\ \xi_i \in \Xi,\ 1 \le i \le s,
  \]
  be an arbitrary element of $\mathfrak{I}$.  Without loss of generality, we may assume the set $\{u_1,\dots,u_s\}$ is linearly independent.

  \medskip

  \emph{Claim:} For $j=\{1,\dots,s\}$, we have $\g_{\xi_j} \otimes f_j \subseteq \mathfrak{I}$.

  \medskip

  This implies that $\mathfrak{I} = \bigoplus_{\xi \in \Xi'} \g_\xi \otimes I_{-\xi}$ for some subspaces $I_{-\xi} \subseteq A_{-\xi}$, $\xi \in \Xi'$, where
  \begin{equation} \label{eq:Xi'}
    \Xi' = \{\xi \in \Xi \ |\ \g_\xi \ne 0\}.
  \end{equation}

  \medskip

  \emph{Proof of claim:} Fix $j \in \{1,\dots,s\}$ and $u \in \g_{\xi_j}$.   Since $\g$ is an irreducible finite dimensional module over itself, Proposition~\ref{prop:density-theorem} (with $A=\kk$) implies that there exists $p \in U(\g)$ such that $p u_j = u$ and $p u_i = 0$ for all $i \ne j$.  Now, the $\Xi$-grading on $\g$ induces a $\Xi$-grading on $U(\g)$.  Fixing a basis for $\g$ compatible with its $\Xi$-grading, we can assume that $p$ is a sum of monomials in this basis of degree zero.  That is, we can write $p = \sum_{k=1}^r p_k$, where, for $k = 1,\dots,r$, we have
  \[
    p_k = \prod_{\ell=1}^{n_k} x_{k,\ell},\quad x_{k,\ell} \in \g_{\zeta_{k,\ell}},\quad \sum_{\ell=1}^{n_k} \zeta_{k,\ell} = 0.
  \]
  Now, by \cite[Lem.~4.4]{NS11}, we have $\prod_{\ell=1}^{n_k} A_{-\zeta_{k,\ell}} = A_0$.  Thus, there exist $g_{k,\ell,q} \in A_{-\zeta_{k,\ell}}$, $1 \le \ell \le n_k$, $1 \le q \le b$, $b \in \N$, with
  \[
    \sum_{q=1}^b \left( \prod_{\ell=1}^{n_k} g_{k,\ell,q} \right) = 1.
  \]
  Define
  \begin{equation} \label{eq:tilde-p-def}
    \tilde p_k = \sum_{q=1}^b \left( \prod_{\ell=1}^{n_k} (x_{k,\ell} \otimes g_{k,\ell,q}) \right),\quad \tilde p = \sum_{k=1}^r \tilde p_k,
  \end{equation}
  and note that $\tilde p \in U \left( (\g \otimes A)^\Gamma \right)$.  Then
  \begin{align*}
    \tilde p_k (u_i \otimes f_i) &= \sum_{q=1}^b \left( \prod_{\ell=1}^{n_k} (x_{k,\ell} \otimes g_{k,\ell,q}) \right) \left( u_i \otimes f_i \right) \\
    &= \sum_{q=1}^b \left( (p_k u_i) \otimes \left( \prod_{\ell=1}^{n_k} g_{k,\ell,q} \right) f_i \right) \\
    &= (p_k u_i) \otimes \left( \sum_{q=1}^b \left( \prod_{\ell=1}^{n_k} g_{k,\ell,q} \right) f_i \right) \\
    &= (p_k u_i) \otimes f_i.
  \end{align*}
  Thus
  \[
    \tilde p z = \left( \sum_{k=1}^r \tilde p_k \right) \left( \sum_{i=1}^s u_i \otimes f_i \right) = \sum_{i=1}^s \left( \sum_{k=1}^r p_k u_i \right) \otimes f_i = \sum_{i=1}^s (p u_i) \otimes f_i = u \otimes f_j.
  \]
  This completes the proof of the claim.

  \medskip

  \emph{Claim:} We have $A_{-\zeta} I_{-\xi} \subseteq I_{-\xi-\zeta}$ for all $\xi \in \Xi'$ and $\zeta \in \Xi$ such that $\xi + \zeta \in \Xi'$.

  \medskip

  \emph{Proof of claim:} Fix such $\zeta,\xi$ and $f \in A_{-\zeta}$.  Fix also nonzero elements $u \in \g_\xi$ and $v \in \g_{\xi+\zeta}$.  As in the proof of the previous claim, Proposition~\ref{prop:density-theorem} implies that there exists $p \in U(\g)$ such that $p u = v$.  As above, we can write
  \[
    p = \sum_{k=1}^r p_k,\quad p_k = \prod_{\ell=1}^{n_k} x_{k,\ell},\quad x_{k,\ell} \in \g_{\zeta_{k,\ell}},\quad \sum_{\ell=1}^{n_k} \zeta_{k,\ell} = \zeta.
  \]
  By \cite[Lem.~4.4]{NS11}, $\prod_{\ell=1}^{n_k} A_{-\zeta_{k,\ell}} = A_{-\zeta}$.  Thus, there exist $g_{k,\ell,q} \in A_{-\zeta_{k,l}}$, $1 \le \ell \le n_k$, $1 \le q \le b$, $b \in \N$, with
  \[
    \sum_{q=1}^b \left( \prod_{\ell=1}^{n_k} g_{k,\ell,q} \right) = f.
  \]
  Define $\tilde p$ as in~\eqref{eq:tilde-p-def}.  Then, as above, we have
  \[
    \tilde p (u \otimes f') = v \otimes ff' \quad \forall\ f' \in I_{-\xi}.
  \]
  This implies that $ff' \in I_{-\xi-\gamma}$ for all $f' \in I_{-\xi}$, completing the proof of the claim.

  \medskip

  For $\xi \in \Xi \setminus \Xi'$, define
  \[
    I_{-\xi} = \sum_{\zeta \in \Xi'} A_{-\xi+\zeta} I_{-\zeta}.
  \]
  Then we clearly have
  \[
    \mathfrak{I} = \bigoplus_{\xi \in \Xi} \g_\xi \otimes I_{-\xi}.
  \]
  It follows from the above that $I := \bigoplus_{\xi \in \Xi} I_{-\xi}$ is an ideal of $A$.  In fact, it is the ideal of $A$ generated by $\bigoplus_{\xi \in \Xi'} I_{-\xi}$.  Furthermore, $I$ is $\Gamma$-invariant since it is a $\Xi$-graded subspace of $A$.
\end{proof}

\begin{rem}
  In the case that $\g$ is a finite dimensional simple Lie algebra and $A = \kk[t_1^{\pm 1},\dots,t_n^{\pm 1}]$ is a Laurent polynomial ring in several variables, Proposition~\ref{prop:equivariant-ideal-form} was proven in \cite[Prop.~2.7]{Lau10}.
\end{rem}

\begin{lem} \label{lem:finite-support-codim}
  If $I$ is a $\Gamma$-invariant ideal of $A$, then $|\Supp I|$ is divisible by $|\Gamma|$ and
  \[
    \dim A_\xi/I_\xi \ge \frac{|\Supp I|}{|\Gamma|} \quad \forall\ \xi \in \Xi.
  \]
  In particular, if $\dim A_\xi/I_\xi$ is finite for any $\xi \in \Xi$, then $\Supp I$ is finite.
\end{lem}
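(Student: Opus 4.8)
The plan is to exploit the fact that $\Gamma$ acts freely on $X_\rat = \maxSpec A$, so $\Supp I$ is a disjoint union of $\Gamma$-orbits, each of size $|\Gamma|$; this immediately gives that $|\Gamma|$ divides $|\Supp I|$. For the dimension bound, the idea is to reduce to a finite-dimensional situation: if some $\dim A_\xi/I_\xi$ is finite, I would like to conclude that $\Supp I$ is finite, and then compare $\dim A_\xi/I_\xi$ against $|\Supp I|/|\Gamma|$. The cleanest route is to first dispose of the case $\Supp I$ finite and extract the bound there, and then argue that the infinite-$\Supp I$ case forces every $\dim A_\xi/I_\xi$ to be infinite (so the stated inequality holds vacuously, reading $\infty \ge \infty$).

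So, first, suppose $\Supp I$ is finite, say $\Supp I = \bigsqcup_{j=1}^{k} \Gamma \sm_j$ with the $\Gamma\sm_j$ distinct orbits, so $|\Supp I| = k|\Gamma|$. By Lemma~\ref{lem:ideal-support-implies-codim}, $I$ has finite codimension in $A$, and by Lemma~\ref{lem:ideal-contains-power-of-radical} (applied in the Noetherian ring $A$) there is $n$ with $\prod_{\sm \in \Supp I} \sm^n \subseteq I$. Then there is a surjection $A/I \twoheadrightarrow A / \prod_{\sm \in \Supp I}\sm^{\,?}$ — more precisely I want to go the other way: since $\rad I = \bigcap_{\sm\in\Supp I}\sm$, there is a surjection $A/I \twoheadrightarrow A/\rad I \cong \bigoplus_{\sm\in\Supp I} A/\sm \cong \kk^{|\Supp I|}$, and this surjection is $\Gamma$-equivariant (the decomposition by maximal ideals is permuted by $\Gamma$ exactly as $\Gamma$ permutes $\Supp I$). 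Taking $\Xi$-isotypic components, which is an exact operation (we are in characteristic zero and $\Gamma$ is finite), we get surjections $A_\xi/I_\xi \twoheadrightarrow (\kk^{|\Supp I|})_\xi$ for each $\xi \in \Xi$. The key point is then to compute the dimension of the $\xi$-isotypic component of the permutation module $\kk^{|\Supp I|} = \bigoplus_{j=1}^k \kk[\Gamma\sm_j]$: since $\Gamma$ acts freely on each orbit, $\kk[\Gamma\sm_j] \cong \kk[\Gamma]$ as a $\Gamma$-module, the regular representation, in which every irreducible character of the abelian group $\Gamma$ (equivalently, every $\xi \in \Xi$) appears exactly once. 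Hence the $\xi$-isotypic component of $\kk^{|\Supp I|}$ has dimension exactly $k = |\Supp I|/|\Gamma|$, giving $\dim A_\xi/I_\xi \ge |\Supp I|/|\Gamma|$ for all $\xi$.

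Second, suppose $\Supp I$ is infinite. I would argue the contrapositive of the last sentence of the lemma: it suffices to show $\dim A_\xi/I_\xi = \infty$ for every $\xi$. If some $\dim A_\xi/I_\xi$ were finite, pick a cofinite $\Xi$-graded subspace $B \subseteq A$ with $B \cap A_\xi \subseteq I_\xi$ — hmm, better: use that $A_0$ is a finitely generated $\kk$-algebra (by \cite[Lem.~4.4]{NS11}-type arguments $A$ is a finitely generated $A_0$-module, so $A_0$ is finitely generated as a $\kk$-algebra and $A_\xi$ is a finitely generated $A_0$-module), that $I_\xi$ is an $A_0$-submodule of $A_\xi$, and that $I_0 = \{f \in A_0 : f A_\xi \subseteq I_\xi \text{ via the module structure}\}$ — actually the clean statement is: $A_0 \cdot I_\xi$ "pulls back" to an ideal of $A_0$ of finite codimension whenever $\dim A_\xi/I_\xi < \infty$, since $A_\xi$ is a faithful-enough $A_0$-module. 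Then $I_0 \subseteq I$ has finite codimension in $A_0$, hence finite support in $\maxSpec A_0$, and since $A$ is finite over $A_0$ the preimage in $\maxSpec A$ is finite, forcing $\Supp I$ finite — a contradiction. I expect \textbf{this second case to be the main obstacle}: making the passage between finiteness of $\dim A_\xi/I_\xi$ and finiteness of $\Supp I$ rigorous requires the structural facts that $A_0$ is a finitely generated $\kk$-algebra and $A$ is module-finite over $A_0$ (both of which follow from $A$ finitely generated plus $\Gamma$ finite, via integrality of each element of $A$ over $A_0$), plus a careful argument that a finite-dimensional quotient $A_\xi/I_\xi$ detects only finitely many maximal ideals of $A$ in $\Supp I$. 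Everything else is bookkeeping with isotypic components and the regular representation of the abelian group $\Gamma$.
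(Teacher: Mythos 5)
Your first half — divisibility of $|\Supp I|$ by $|\Gamma|$ and the bound in the case $\Supp I$ finite — is correct, and the finite case is in fact a nice dual, representation-theoretic version of the paper's argument: surjecting $A_\xi/I_\xi$ onto the $\xi$-isotypic component of the free $\kk[\Gamma]$-module $\kk^{|\Supp I|}$ plays the same role as the paper's injection of an $n$-dimensional space into $A_\xi/I_\xi$, and both hinge on the freeness of the $\Gamma$-action making each orbit afford the regular representation.

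The second case (infinite $\Supp I$) has a genuine gap, which you flag but do not close. The missing step is the inclusion $\Supp I \subseteq \{\sm \in \maxSpec A : \sm \cap A_0 \supseteq J\}$ where $J := \Ann_{A_0}(A_\xi/I_\xi)$. Your phrase ``$I_0 \subseteq I$'' conflates two distinct objects: the isotypic piece $I \cap A_0$ (which sits in $I$ but has no a priori finite codimension in $A_0$) and the annihilator $J$ (which has finite codimension but need not sit inside $I$; one only gets $I \cap A_0 \subseteq J$ in general). The needed inclusion is true but requires an extra idea: if $\sm \in \Supp I$ and $f \in J$ then $fA_\xi \subseteq I_\xi \subseteq \sm$, and because $\Gamma$ acts freely one can produce, by averaging over $\Gamma$ against $\xi$, an element of $A_\xi$ not in $\sm$; so $A_\xi \not\subseteq \sm$ and primality of $\sm$ forces $f \in \sm$. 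Only with this, plus Noether's theorem ($A_0 = A^\Gamma$ finitely generated) and module-finiteness of $A$ over $A_0$ (giving finite fibers of $\maxSpec A \to \maxSpec A_0$), does your sketch go through — and notice that the ingredient you needed anyway (the averaged function in $A_\xi$ detecting $\sm$) is exactly the paper's whole proof.

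The paper sidesteps the case split entirely. For any $n \in \N$ with $n \le |\Supp I|/|\Gamma|$, pick points $\sm_1,\dots,\sm_n \in \Supp I$ in distinct orbits, take interpolating $f'_i$ on the (pairwise distinct) points $\gamma\sm_j$, and set $f_i = \sum_{\gamma}\xi(\gamma^{-1})\,\gamma f'_i \in A_\xi$. A nontrivial linear combination of the $f_i$ is nonzero at some $\sm_j \supseteq I$ and hence is not in $I_\xi$, so $\dim A_\xi/I_\xi \ge n$. Since $n$ is arbitrary subject to $n \le |\Supp I|/|\Gamma|$, the infinite case is absorbed for free, with no appeal to Noether's theorem, integrality, or lying-over. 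The averaging is the isotypic projection inside $\kk[\Gamma]$ — the same idea as your Case 1 — deployed directly rather than via a two-stage reduction.
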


\begin{proof}
  Let $I$ be a $\Gamma$-invariant ideal of $A$.  That $|\Supp I|$ is divisible by $|\Gamma|$ follows from the fact that $\Supp I$ is a $\Gamma$-invariant subset of $X_\rat$, on which $\Gamma$ acts freely.  For the inequality, it suffices to show that, for $\xi \in \Xi$, $\dim A_\xi/I_\xi \ge n$ for all $n \in \N$ satisfying $n \ge |\Supp I|/|\Gamma|$.  Fix such an $n \in \N$ and choose points $\sm_1,\dots,\sm_n \in \Supp I$ in distinct orbits.  For $i=1,\dots,n$, we can choose $f'_i \in A$ such that
  \[
    f'_i(\sm_j)=\delta_{ij},\quad \text{and} \quad f'_i(\gamma \sm_j)=0\quad \forall\ 1 \ne \gamma \in \Gamma,\ j=1,\dots,n.
  \]
  (Here and in what follows, the evaluation $f(\sm)$ of $f \in A$ at $\sm \in X_\rat$ is the element of $\kk$ corresponding to $f + \sm$ in the quotient $A/\sm \cong \kk$.)
  Then define
  \[
    f_i = \sum_{\gamma \in \Gamma} \xi(\gamma^{-1}) \gamma f'_i.
  \]
  One easily checks that $f_i \in A_\xi$ and $f_i(\Gamma \sm_j) = \delta_{ij}$.  Thus the set $\{f_1,\dots,f_n\}$ is linearly independent and spans a subspace of $A_\xi$ intersecting $I_\xi$ trivially.  This proves $\dim A_\xi/I_\xi \ge n$ as desired.
\end{proof}

\begin{lem} \label{lem:fd-equiv-rep-finite-support}
  Every finite dimensional $(\g \otimes A)^\Gamma$-module has finite support.
\end{lem}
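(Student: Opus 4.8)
The plan is to exploit the structure theorem for ideals of $(\g \otimes A)^\Gamma$ (Proposition~\ref{prop:equivariant-ideal-form}) together with the codimension-versus-support dictionary of Lemma~\ref{lem:finite-support-codim}. Roughly: a finite dimensional module is killed by a finite-codimension ideal of $(\g \otimes A)^\Gamma$; such an ideal must come from a $\Gamma$-invariant ideal of $A$ whose homogeneous components have finite codimension, and Lemma~\ref{lem:finite-support-codim} then forces that ideal of $A$ to have finite support; finally this ideal sits inside $\Ann_A V$, so $\Supp_A V$ is finite.

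More precisely, I would let $V$ be a finite dimensional $(\g \otimes A)^\Gamma$-module with structure map $\rho \colon (\g \otimes A)^\Gamma \to \End_\kk V$, and set $\mathfrak{K} := \ker \rho$. Since $\End_\kk V$ is finite dimensional, $\mathfrak{K}$ is an ideal of $(\g \otimes A)^\Gamma$ of finite codimension. As $\g$ is a basic classical Lie superalgebra, it is in particular simple, so Proposition~\ref{prop:equivariant-ideal-form} applies and gives $\mathfrak{K} = (\g \otimes I)^\Gamma = \bigoplus_{\xi \in \Xi} \g_\xi \otimes I_{-\xi}$ for some $\Gamma$-invariant ideal $I = \bigoplus_{\xi \in \Xi} I_\xi$ of $A$.

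Next I would convert the finite codimension of $\mathfrak{K}$ in $(\g \otimes A)^\Gamma = \bigoplus_{\xi \in \Xi} \g_\xi \otimes A_{-\xi}$ into a statement about a single graded piece of $I$. Choosing any $\xi \in \Xi$ with $\g_\xi \ne 0$ (one exists since $\g \ne 0$), the space $\g_\xi \otimes (A_{-\xi}/I_{-\xi})$ is a direct summand of the finite dimensional space $(\g \otimes A)^\Gamma / \mathfrak{K}$, and since $\dim \g_\xi \ge 1$ this forces $\dim A_{-\xi}/I_{-\xi} < \infty$. Lemma~\ref{lem:finite-support-codim} then gives directly that $\Supp I$ is finite.

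Finally I would close the loop: from $\mathfrak{K} V = 0$ we get $(\g \otimes I)^\Gamma V = 0$ with $I$ a $\Gamma$-invariant ideal of $A$, so by Definition~\ref{def:annihilator} we have $I \subseteq \Ann_A V$, hence $\Supp_A V = \Supp \Ann_A V \subseteq \Supp I$, which is finite. I do not expect a genuine obstacle here: essentially all the content has been pushed into Proposition~\ref{prop:equivariant-ideal-form} and Lemma~\ref{lem:finite-support-codim}. The only point needing a little attention is the passage from finite codimension of $\mathfrak{K}$ in $(\g \otimes A)^\Gamma$ to finite codimension of $I_{-\xi}$ in $A_{-\xi}$, and checking that some homogeneous component $\g_\xi$ is nonzero, both of which are immediate.
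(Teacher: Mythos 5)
Your proof is correct and takes essentially the same approach as the paper: identify the kernel of the representation, apply Proposition~\ref{prop:equivariant-ideal-form} to write it as $(\g \otimes I)^\Gamma$, deduce finite codimension of some $I_{-\xi}$ in $A_{-\xi}$ from finite dimensionality of $\End V$, and invoke Lemma~\ref{lem:finite-support-codim}. You spell out the final passage from $\Supp I$ to $\Supp_A V$ a little more explicitly than the paper, but the argument is the same.
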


\begin{proof}
  Let $V$ be a finite dimensional $(\g \otimes A)^\Gamma$-module.  By Proposition~\ref{prop:equivariant-ideal-form}, the annihilator of $V$ is of the form $\bigoplus_{\xi \in \Xi} \g_\xi \otimes I_{-\xi}$ for some $\Gamma$-invariant ideal $I = \bigoplus_{\xi \in \Xi} I_{-\xi}$ of $A$.  The action of $(\g \otimes A)^\Gamma = \bigoplus_{\xi \in \Xi} \g_\xi \otimes A_{-\xi}$ factors through the quotient
  \[ \ts
    \bigoplus_{\xi \in \Xi} \g_\xi \otimes A_{-\xi} \twoheadrightarrow \left( \bigoplus_{\xi \in \Xi} \g_\xi \otimes A_{-\xi} \right) / \left( \bigoplus_{\xi \in \Xi} \g_\xi \otimes I_{-\xi} \right) \cong \bigoplus_{\xi \in \Xi} \g_\xi \otimes (A_{-\xi}/I_{-\xi}) \hookrightarrow \End V.
  \]
  Since $\End V$ is finite dimensional, we have that $A_{-\xi}/I_{-\xi}$ is finite dimensional for all $\xi \in \Xi'$, where $\Xi'$ is defined as in~\eqref{eq:Xi'}.  Since $\g \ne 0$, the set $\Xi'$ is nonempty.  Thus, by Lemma~\ref{lem:finite-support-codim}, $I$ has finite support and thus so does $V$.
\end{proof}

\begin{prop} \label{prop:fd-module-restriction}
  Every finite dimensional $(\g \otimes A)^\Gamma$-module $V$ is the restriction of a $(\g \otimes A)$-module $\tilde V$.  Furthermore, $V$ is irreducible if and only if $\tilde V$ is.
\end{prop}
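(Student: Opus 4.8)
The plan is to show that the action of $(\g \otimes A)^\Gamma$ on $V$ factors through a finite-dimensional \emph{ordinary} map superalgebra $\g \otimes B$ which is simultaneously a quotient of $\g \otimes A$ (via an ordinary evaluation map) and of $(\g \otimes A)^\Gamma$ (via the corresponding equivariant evaluation map); the desired $(\g \otimes A)$-module $\tilde V$ will then be the inflation of $V$ along $\g \otimes A \twoheadrightarrow \g \otimes B$.

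First I would set $I = \Ann_A V$, which is a $\Gamma$-invariant ideal of $A$ by Definition~\ref{def:annihilator} and has finite support by Lemma~\ref{lem:fd-equiv-rep-finite-support}. Since $A$ is Noetherian, Lemma~\ref{lem:ideal-contains-power-of-radical} together with Lemma~\ref{lem:ideal-product-intersection} yields $J := \prod_{\sm \in \Supp I} \sm^n \subseteq I$ for some $n \in \N$ (here $\rad I = \prod_{\sm \in \Supp I} \sm$ since the maximal ideals in $\Supp I$ have pairwise disjoint supports, and $(\rad I)^n = \prod_{\sm \in \Supp I} \sm^n$), and $J$ is $\Gamma$-invariant because $\Supp I$ is. As $\Gamma$ acts freely on $X_\rat$ and $\Supp I$ is a finite $\Gamma$-stable set, I may write $\Supp I = \bigsqcup_{j=1}^r \Gamma \sm_j$ with $\{\sm_1, \dots, \sm_r\} \in X_*$.

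Next I would invoke Lemma~\ref{lem:ev-image}: the equivariant evaluation $\ev^\Gamma_{\sm_1^n, \dots, \sm_r^n} \colon (\g \otimes A)^\Gamma \to \g \otimes B$ is surjective, where $\g \otimes B := \bigoplus_{j=1}^r \g \otimes (A/\sm_j^n)$; moreover, by Definition~\ref{def:gen-eval-map}, this map is the restriction to $(\g \otimes A)^\Gamma$ of the ordinary evaluation $\ev_{\sm_1^n, \dots, \sm_r^n} \colon \g \otimes A \twoheadrightarrow \g \otimes B$. From the construction in the proof of Lemma~\ref{lem:ev-image}, the kernel of $\ev^\Gamma_{\sm_1^n, \dots, \sm_r^n}$ is $(\g \otimes J)^\Gamma$, which is contained in $(\g \otimes I)^\Gamma$ and hence annihilates $V$. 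Therefore the $(\g \otimes A)^\Gamma$-action on $V$ descends to a $(\g \otimes B)$-action, and inflating the latter along the surjection $\ev_{\sm_1^n, \dots, \sm_r^n}$ defines a $(\g \otimes A)$-module $\tilde V$ on the same underlying space. Since this surjection restricts on $(\g \otimes A)^\Gamma$ to $\ev^\Gamma_{\sm_1^n, \dots, \sm_r^n}$, the restriction of $\tilde V$ to $(\g \otimes A)^\Gamma$ is exactly $V$. Finally, both $\ev_{\sm_1^n, \dots, \sm_r^n}$ and $\ev^\Gamma_{\sm_1^n, \dots, \sm_r^n}$ are surjective onto $\g \otimes B$, so the $(\g \otimes A)$-submodules of $\tilde V$, the $(\g \otimes B)$-submodules of $V$, and the $(\g \otimes A)^\Gamma$-submodules of $V$ all coincide; hence $V$ is irreducible if and only if $\tilde V$ is.

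The one point requiring care is the identification of the kernel of the equivariant evaluation map: although $\ev^\Gamma_{\sm_1^n, \dots, \sm_r^n}$ is the restriction of the ordinary evaluation at the chosen orbit representatives $\sm_1, \dots, \sm_r$, its kernel on $(\g \otimes A)^\Gamma$ is $(\g \otimes J)^\Gamma$ with $J$ the product of the $\sm^n$ over the \emph{entire} support $\Supp I$ (not merely the representatives) — this follows from the disjointness of the supports of the $\Gamma$-translates and Lemma~\ref{lem:ideal-product-intersection} — and it is precisely this $\Gamma$-invariance, together with the inclusion $J \subseteq I$, that makes the action descend. Everything else is routine, using the Chinese Remainder Theorem (Lemma~\ref{lem:ideal-product-intersection}) and exactness of the functor of $\Gamma$-invariants in characteristic zero.
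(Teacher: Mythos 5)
Your proposal is correct and follows essentially the same strategy as the paper: construct a common finite-dimensional quotient $\g\otimes B$ of both $\g\otimes A$ and $(\g\otimes A)^\Gamma$, with the equivariant projection being the restriction of the ordinary one, and then inflate. The only cosmetic difference is that you pass to the fixed ideal $J=\prod_{\sm\in\Supp I}\sm^n\subseteq I$ and cite Lemma~\ref{lem:ev-image} directly, whereas the paper decomposes $I$ itself into a product of $\Gamma$-translates of primary components $I_\sm$ and re-derives the content of that lemma inline via the isomorphism $\tau$.
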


\begin{proof}
  Let $V$ be a finite dimensional $(\g \otimes A)^\Gamma$-module and let $\rho : (\g \otimes A)^\Gamma \to \End V$ denote the corresponding representation.  By Proposition~\ref{prop:equivariant-ideal-form} and Lemma~\ref{lem:fd-equiv-rep-finite-support}, the kernel of $\rho$ is of the form $(\g \otimes I)^\Gamma$ for some $\Gamma$-invariant ideal $I$ of $A$ with finite support.  Since $A$ is finitely generated, Lemma~\ref{lem:ideal-support-implies-codim} implies that $I$ is of finite codimension in $A$.  The support of $I$ is a $\Gamma$-invariant subset of $X_\rat$.  Let $\sM \subseteq X_*$ contain one point from each $\Gamma$-orbit in the support of $I$.  Then
  \[ \ts
    I = \prod_{\sm \in \sM,\, \gamma \in \Gamma} \gamma I_\sm,
  \]
  where $I_\sm$ is an ideal with support $\{\sm\}$ for each $\sm \in \sM$,.  Thus
  \begin{align*}
    (\g \otimes A)^\Gamma/(\g \otimes I)^\Gamma &\cong (\g \otimes A/I)^\Gamma \\
    &\cong \ts \left( \g \otimes \bigoplus_{\sm \in \sM,\, \gamma \in \Gamma} A/(\gamma I_\sm) \right)^\Gamma \\
    &\cong \ts \left( \bigoplus_{\sm \in \sM,\, \gamma \in \Gamma} (\g \otimes A/(\gamma I_\sm)) \right)^\Gamma \\
    &\cong \ts \bigoplus_{\sm \in \sM} (\g \otimes A/I_\sm) \\
    &\cong \ts (\g \otimes A)/(\g \otimes J), \quad J = \prod_{\sm \in \sM} I_\sm,
  \end{align*}
  where, in the second-to-last isomorphism, we use the fact that, for $\sm \in \sM$, the group $\Gamma$ permutes the summands $\g \otimes A/(\gamma I_\sm)$, $\gamma \in \Gamma$.

  We now have the following commutative diagram, where $\tau$ is the above isomorphism, $\pi$ is the natural projection, and $\bar \rho$ is the map induced by $\rho$.
  \[
    \xymatrix{
      \g \otimes A \ar@{->>}[r]^(.35)\pi & (\g \otimes A)/(\g \otimes J) & \\
      (\g \otimes A)^\Gamma \ar@{^{(}->}[u] \ar@{->>}[r] & (\g \otimes A)^\Gamma/(\g \otimes I)^\Gamma \ar[u]^\tau_\cong \ar[r]^(0.65){\bar \rho} & \End V
    }
  \]
  It is clear that ${\bar \rho} \circ \tau^{-1} \circ \pi$ is a representation of $\g \otimes A$ that, when restricted to $(\g \otimes A)^\Gamma$, coincides with $\rho$.  Since both representations factor through the quotient $(\g \otimes A)^\Gamma/(\g \otimes I)^\Gamma$, one is irreducible if and only if the other is.
\end{proof}

\begin{lem} \label{lem:move-eval-rep-around-orbit}
  Suppose $\sm \in X_\rat$, $\ell \in \N$, and $\rho$ is a representation of $\g \otimes (A/\sm^\ell)$.  Then $\rho \circ \gamma^{-1}$ is a representation of $\g \otimes (A/\gamma \sm^\ell)$ and $\ev_{\gamma \sm^\ell} (\rho \circ \gamma^{-1}) \cong \ev_{\sm^\ell} (\rho) \circ \gamma^{-1}$.  Thus $\ev^\Gamma_{\gamma \sm^\ell} (\rho \circ \gamma^{-1}) \cong \ev^\Gamma_{\sm^\ell} (\rho)$.
\end{lem}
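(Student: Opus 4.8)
The plan is to reduce the statement to the commutativity of one diagram of Lie superalgebra homomorphisms and then to restrict everything to $\Gamma$-fixed points.  First I would fix conventions: below, $\gamma^{-1}$ is used to denote simultaneously the given automorphism of $\g$, the given automorphism of $A$, the induced automorphism $\gamma^{-1}\otimes\gamma^{-1}$ of $\g\otimes A$, and the maps these induce on appropriate quotients.  The one genuinely substantive point is the ideal identity
\[
  \gamma^{-1}\left( \gamma\sm^\ell \right) = \left( \gamma^{-1}\gamma\sm \right)^\ell = \sm^\ell ,
\]
which holds because an algebra automorphism carries maximal ideals to maximal ideals (hence rational points to rational points) and commutes with forming powers of ideals.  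This shows that the automorphism $\gamma^{-1}$ of $A$ descends to an isomorphism $A/\gamma\sm^\ell \xrightarrow{\sim} A/\sm^\ell$, and hence, tensoring with the automorphism $\gamma^{-1}$ of $\g$, to an isomorphism $\g\otimes(A/\gamma\sm^\ell)\xrightarrow{\sim}\g\otimes(A/\sm^\ell)$ of Lie superalgebras; in particular $\rho\circ\gamma^{-1}$ really is a representation of $\g\otimes(A/\gamma\sm^\ell)$, as asserted.

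Next I would check that the square
\[
  \xymatrix{
    \g\otimes A \ar[r]^{\gamma^{-1}} \ar[d]_{\ev_{\gamma\sm^\ell}} & \g\otimes A \ar[d]^{\ev_{\sm^\ell}} \\
    \g\otimes(A/\gamma\sm^\ell) \ar[r]^{\gamma^{-1}} & \g\otimes(A/\sm^\ell)
  }
\]
commutes.  Since all four maps are algebra homomorphisms it suffices to verify this on elements $u\otimes f$: going right and then down gives $(\gamma^{-1}u)\otimes(\gamma^{-1}f+\sm^\ell)$, while going down and then right gives $(\gamma^{-1}u)\otimes\gamma^{-1}(f+\gamma\sm^\ell)$, and these agree since $\gamma^{-1}(f+\gamma\sm^\ell)=\gamma^{-1}f+\gamma^{-1}(\gamma\sm^\ell)=\gamma^{-1}f+\sm^\ell$ by the identity above.

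Composing this commuting square with $\rho$ then yields
\[
  \ev_{\gamma\sm^\ell}(\rho\circ\gamma^{-1}) = (\rho\circ\gamma^{-1})\circ\ev_{\gamma\sm^\ell} = \rho\circ\ev_{\sm^\ell}\circ\gamma^{-1} = \ev_{\sm^\ell}(\rho)\circ\gamma^{-1}
\]
as representations of $\g\otimes A$ on the common space $V$; in particular they are isomorphic (indeed equal) $(\g\otimes A)$-modules.  Finally, restricting both sides to $(\g\otimes A)^\Gamma$: since $\gamma^{-1}\mu=\mu$ for every $\mu\in(\g\otimes A)^\Gamma$, the twist $\ev_{\sm^\ell}(\rho)\circ\gamma^{-1}$ restricts to precisely $\ev^\Gamma_{\sm^\ell}(\rho)$, whereas $\ev_{\gamma\sm^\ell}(\rho\circ\gamma^{-1})$ restricts to $\ev^\Gamma_{\gamma\sm^\ell}(\rho\circ\gamma^{-1})$ by definition, giving the last assertion.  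I do not anticipate any real obstacle here: the only thing requiring care is keeping the several meanings of $\gamma^{-1}$ apart and making the descent to the quotients precise, after which the lemma is a short bookkeeping exercise resting on the single identity $\gamma^{-1}(\gamma\sm^\ell)=\sm^\ell$.
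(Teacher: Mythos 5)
Your proposal is correct and takes essentially the same route as the paper's proof: the paper's one-line computation $\ev_{\gamma \sm^\ell} = \gamma \circ \ev_{\sm^\ell} \circ \gamma^{-1}$ is precisely the commuting square you draw, and the chain of equalities and the restriction to $\Gamma$-fixed points are identical. The only difference is that you spell out the underlying ideal identity $\gamma^{-1}(\gamma\sm^\ell) = \sm^\ell$ and the induced isomorphisms of quotients explicitly, whereas the paper leaves this implicit.
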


\begin{proof}
  We have
  \[
    \ev_{\gamma \sm^\ell}(\rho \circ \gamma^{-1}) = \rho \circ \gamma^{-1} \circ \ev_{\gamma \sm^\ell} = \rho \circ \gamma^{-1} \circ \big( \gamma \circ \ev_{\sm^\ell} \circ \gamma^{-1} \big) = \ev_{\sm^\ell}(\rho) \circ \gamma^{-1}.
  \]
  The second isomorphism follows from the fact that $\gamma^{-1}$ fixes $(\g \otimes A)^\Gamma$.
\end{proof}

\begin{cor} \label{cor:irr-fd-modules-restrictions}
  Every irreducible finite dimensional representation of $(\g \otimes A)^\Gamma$ is a generalized evaluation representation
  \[
    \ev^\Gamma_{\sm_1^{\ell_1},\dots,\sm_n^{\ell_n}} (\rho_1,\dots,\rho_m),
  \]
  such that
  \begin{enumerate}
    \item the maximal ideals $\sm_1,\dots,\sm_\ell$ are distinct,
    \item the representations $\rho_i$ are irreducible, and
    \item \label{cor-item:distinct-orbits} $\{\sm_1,\dots,\sm_n\} \in X_*$ (in other words, the $\sm_i$ all lie in distinct $\Gamma$-orbits).
  \end{enumerate}
\end{cor}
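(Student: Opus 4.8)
The plan is to pass from the twisted setting to an untwisted evaluation picture by choosing an annihilating ideal adapted to the $\Gamma$-orbits. Let $V$ be an irreducible finite dimensional representation of $(\g \otimes A)^\Gamma$, with corresponding homomorphism $\rho$. First I would identify $\ker\rho$: since $\g$ is simple, Proposition~\ref{prop:equivariant-ideal-form} gives $\ker\rho = (\g \otimes I)^\Gamma$ for a $\Gamma$-invariant ideal $I \subseteq A$, and Lemma~\ref{lem:fd-equiv-rep-finite-support} (or its proof) shows that $I$ has finite support. As $A$ is finitely generated it is Noetherian, so Lemma~\ref{lem:ideal-contains-power-of-radical} provides $k \in \N_+$ with $(\rad I)^k \subseteq I$; since $\rad I$ is the intersection of the finitely many pairwise comaximal maximal ideals in $\Supp I$, this gives $\prod_{\sm \in \Supp I} \sm^k \subseteq I$.

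Next I would use that $\Gamma$ acts freely. The set $\Supp I$ is a $\Gamma$-invariant subset of $X_\rat$, so $\Supp I = \Gamma\sm_1 \sqcup \cdots \sqcup \Gamma\sm_n$ for some $\sm_1,\dots,\sm_n$ lying in pairwise distinct orbits; thus $\sM := \{\sm_1,\dots,\sm_n\} \in X_*$, which will deliver conditions~(a) and~(c). Set $I' = \prod_{\sm \in \Supp I} \sm^k = \prod_{i=1}^n\prod_{\gamma\in\Gamma}(\gamma\sm_i)^k$, a $\Gamma$-invariant ideal with $I' \subseteq I$. A short computation --- using $\Gamma$-equivariance of elements of $(\g \otimes A)^\Gamma$ together with comaximality of the ideals $(\gamma\sm_i)^k$ --- identifies $\ker \ev^\Gamma_{\sm_1^k,\dots,\sm_n^k}$ with $(\g \otimes I')^\Gamma$. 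Since $\ev^\Gamma_{\sm_1^k,\dots,\sm_n^k}$ surjects onto $\bigoplus_{i=1}^n \g \otimes (A/\sm_i^k)$ by Lemma~\ref{lem:ev-image}, and $(\g \otimes I')^\Gamma \subseteq (\g \otimes I)^\Gamma = \ker\rho$, the map $\rho$ factors through an irreducible representation of $\bigoplus_{i=1}^n \g \otimes (A/\sm_i^k)$.

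Finally I would decompose this representation as an outer tensor product. Writing $\bigoplus_{i=1}^n \g \otimes (A/\sm_i^k) = (\g \otimes (A/\sm_1^k)) \oplus (\g \otimes B)$ with $B = \bigoplus_{i=2}^n A/\sm_i^k$ a commutative associative algebra, Corollary~\ref{cor:irreds-for-tensor-prods} splits off one tensor factor, and induction on $n$ yields irreducible finite dimensional representations $\rho_i$ of $\g \otimes (A/\sm_i^k)$ for which the factored $\rho$ equals $\bigotimes_{i=1}^n \rho_i$. Unwinding the identifications, this says exactly that $\rho = \ev^\Gamma_{\sm_1^k,\dots,\sm_n^k}(\rho_1,\dots,\rho_n)$ in the sense of Definition~\ref{def:gen-eval-rep}, with $\ell_1 = \cdots = \ell_n = k$ and $m = n$, so that~(b) also holds.

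I expect the main obstacle to be the bookkeeping in the second paragraph: matching the abstract quotient $(\g \otimes A)^\Gamma / (\g \otimes I')^\Gamma$ --- obtained via the Chinese Remainder Theorem and by taking $\Gamma$-invariants of a direct sum whose summands $\Gamma$ permutes freely --- with the concrete evaluation map $\ev^\Gamma_{\sm_1^k,\dots,\sm_n^k}$. This is essentially packaged already in Lemma~\ref{lem:ev-image}, so in practice the argument is a short assembly of earlier results. (Alternatively one could invoke Proposition~\ref{prop:fd-module-restriction} to realize $V$ as the restriction of an irreducible $(\g \otimes A)$-module and run the analogous untwisted argument, but the route above avoids that extension.)
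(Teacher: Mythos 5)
Your proof is correct, and it takes a genuinely different route from the paper's. The paper first applies Proposition~\ref{prop:fd-module-restriction} to realize $V$ as the restriction of an irreducible $(\g\otimes A)$-module $\tilde V$, then cites the untwisted classification Theorems~\ref{thm:untwisted-classification-g0-semisimple} and~\ref{thm:untwisted-classification-type-I} to conclude $\tilde V$ is a generalized evaluation representation, and finally uses Lemma~\ref{lem:move-eval-rep-around-orbit} to shuffle points into distinct orbits and a short contradiction argument for irreducibility of the $\rho_i$. You instead avoid Proposition~\ref{prop:fd-module-restriction} and the untwisted theorems entirely: you compute $\ker\rho$ directly via Proposition~\ref{prop:equivariant-ideal-form} and (the proof of) Lemma~\ref{lem:fd-equiv-rep-finite-support}, fix orbit representatives up front so that conditions (a) and (c) hold by construction, identify the kernel of $\ev^\Gamma_{\sm_1^k,\dots,\sm_n^k}$ with $(\g\otimes I')^\Gamma$ using freeness and comaximality, and then decompose the resulting irreducible $\bigoplus_i\g\otimes(A/\sm_i^k)$-module by induction with Corollary~\ref{cor:irreds-for-tensor-prods}, which gives (b) for free. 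Your argument is more self-contained --- it shows the corollary does not actually need the full classification of untwisted modules, only the finite-support and outer-tensor-product machinery --- whereas the paper's is a shorter cite-chain once Proposition~\ref{prop:fd-module-restriction} and the classification theorems are in hand; your kernel computation is also essentially the content of that proposition, unpacked inline. Both approaches are valid, and you correctly flag the paper's route as the alternative.
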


\begin{proof}
  By Proposition~\ref{prop:fd-module-restriction} and Theorems~\ref{thm:untwisted-classification-g0-semisimple} and~\ref{thm:untwisted-classification-type-I}, every irreducible finite dimensional representation of $(\g \otimes A)^\Gamma$ is a generalized evaluation representation $\ev^\Gamma_{\sm_1^{\ell_1},\dots,\sm_n^{\ell_n}} (\rho_1,\dots,\rho_m)$.  By Lemma~\ref{lem:move-eval-rep-around-orbit}, we can always assume that condition~\eqref{cor-item:distinct-orbits} is satisfied.  Then, by replacing the $\rho_i$ by appropriate tensor products, we can always assume the maximal ideals $\sm_i$ are distinct.  Now, if $\rho_i$ is reducible for some $i=1,\dots,n$, then $\ev^\Gamma_{\sm_1^{\ell_1},\dots,\sm_n^{\ell_n}} (\rho_1,\dots,\rho_m)$ is clearly also reducible, which is a contradiction.
\end{proof}

\begin{defin}[$\cL(X,\g_{\bar 0}^\ab)^\Gamma$]
  If $\theta \in \cL_\sm(X,\g_{\bar 0}^\ab)$ and $\gamma \in \Gamma$, then $\theta \circ \gamma^{-1} \in \cL_{\gamma \sm}(X,\g_{\bar 0}^\ab)$.  Thus $\Gamma$ acts on $\cL(X,\g_{\bar 0}^\ab)$ via
  \[
    \gamma \theta = \theta \circ \gamma^{-1},\quad \gamma \in \Gamma,\ \theta \in \cL(X,\g_{\bar 0}^\ab).
  \]
  Let $\cL(X,\g_{\bar 0}^\ab)^\Gamma$ be the subset of $\cL(X,\g_{\bar 0}^\ab)$ consisting of $\Gamma$-fixed points.
\end{defin}

\begin{defin}[$V^\Gamma(\theta,\Psi)$]
  Suppose $\g$ is of type I, $\theta \in \cL(X,\g_{\bar 0}^\ab)^\Gamma$, and $\Psi \in \cE(X,\g_{\bar 0}^\rss)^\Gamma$.  Choose $\sM \in X_*$ to contain one point in each $\Gamma$-orbit of $(\Supp_A \theta) \cup (\Supp \Psi)$.  Let $\theta_\sM$ be the image of $\theta$ under the composition
  \[ \ts
    \cL(X,\g_{\bar 0}^\ab)^\Gamma \hookrightarrow \cL(X,\g_{\bar 0}^\ab) = \bigoplus_{\sm \in X_\rat} \cL_\sm(X,\g_{\bar 0}^\ab) \twoheadrightarrow \bigoplus_{\sm \in \sM} \cL_\sm(X,\g_{\bar 0}^\ab)
  \]
  and define $\Psi_\sM$ by
  \[
    \Psi_\sM(\sm) :=
    \begin{cases}
      \Psi(\sm), & \sm \in \sM, \\
      0, & \sm \not \in \sM.
    \end{cases}
  \]
  Then we define $V^\Gamma(\theta,\Psi)$ to be $(\g \otimes A)^\Gamma$-module obtained by restriction from the $(\g \otimes A)$-module $V(\theta_\sM \otimes \ev_{\Psi_\sM})$.  By Lemma~\ref{lem:move-eval-rep-around-orbit}, this definition is independent of the choice of $\sM$.
\end{defin}

Recall that $\cR(\g_{\bar 0}^\rss)$ denotes the set of isomorphism classes of irreducible finite dimensional representations of $\g_{\bar 0}^\rss$ and that $\Gamma$ acts on $\cR(\g_{\bar 0}^\rss)$ as in~\eqref{eq:Gamma-action-on-R}.

\begin{theo} \label{thm:classification-equivariant}
  Suppose $\g$ is a basic classical Lie superalgebra and let $\cR(X,\g)^\Gamma$ be the set of isomorphism classes of irreducible finite dimensional representations of $(\g \otimes A)^\Gamma$.  Then we have the following:
  \begin{enumerate}
    \item If $\g_{\bar 0}$ is semisimple (i.e.\ $\g$ is of type II or is $A(n,n)$, $n \ge 1$), then the map
        \begin{equation} \label{eq:enumeration-g0-semsimple}
          \cE(X,\g)^\Gamma \to \cR(X,\g)^\Gamma,\quad \Psi \mapsto \ev^\Gamma_\Psi,
        \end{equation}
        is a bijection.

    \item If $\g _{\bar 0}$ is of type I, then the map
        \begin{equation} \label{eq:enumeration-g0-not-semsimple}
          \cL(X,\g_{\bar 0}^\ab)^\Gamma \times \cE(X,\g_{\bar 0}^\rss)^\Gamma \to \cR(X,\g)^\Gamma,\quad (\theta,\Psi) \mapsto V^\Gamma(\theta, \Psi),
        \end{equation}
        is a bijection.
  \end{enumerate}
\end{theo}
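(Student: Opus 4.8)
The plan is to deduce both parts from Corollary~\ref{cor:irr-fd-modules-restrictions} and the untwisted classifications of Section~\ref{sec:classification-untwisted}, exploiting in an essential way that, since $\Gamma$ acts freely on $X_\rat$, every maximal ideal has trivial isotropy; this is what lets one ``localize at a single $\Gamma$-orbit'' and land in an honest module over $\g\otimes(A/\sm^k)$ carrying no residual group action (so that, in particular, the interaction of $\Gamma$ with the distinguished $\Z$-grading never enters). For part~(1), Proposition~\ref{prop:E-enumerates} already shows that $\Psi\mapsto\ev^\Gamma_\Psi$ is a bijection from $\cE(X,\g)^\Gamma$ onto the isomorphism classes of irreducible \emph{evaluation} representations of $(\g\otimes A)^\Gamma$, so it suffices to prove that every irreducible finite dimensional representation $V$ of $(\g\otimes A)^\Gamma$ is an evaluation representation. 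By Corollary~\ref{cor:irr-fd-modules-restrictions} we may write $V\cong\ev^\Gamma_{\sm_1^{\ell_1},\dots,\sm_n^{\ell_n}}(\rho_1,\dots,\rho_n)$ with the $\sm_i$ lying in pairwise distinct $\Gamma$-orbits (in particular pairwise distinct) and each $\rho_i$ an irreducible finite dimensional representation of the finitely generated algebra $\g\otimes(A/\sm_i^{\ell_i})$. Applying Theorem~\ref{thm:untwisted-classification-g0-semisimple} with $A$ replaced by $A/\sm_i^{\ell_i}$ (legitimate since $\g_{\bar 0}$ is semisimple), each $\rho_i$ is an evaluation representation, hence has radical annihilator; as $A/\sm_i^{\ell_i}$ is local Artinian this forces $\rho_i$ to factor through $\g\otimes(A/\sm_i^{\ell_i})\twoheadrightarrow\g\otimes(A/\sm_i)\cong\g$ as an irreducible representation $\bar\rho_i$ of $\g$. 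Thus $V\cong\ev^\Gamma_{\sm_1,\dots,\sm_n}(\bar\rho_1,\dots,\bar\rho_n)$, and defining $\Psi\in\cE(X,\g)$ by $\Psi(\gamma\sm_i)=\gamma[\bar\rho_i]$ for $\gamma\in\Gamma$, $1\le i\le n$, and $\Psi=0$ elsewhere --- well defined, finitely supported, and $\Gamma$-equivariant because the points $\gamma\sm_i$ are pairwise distinct ($\Gamma$ acts freely and $\{\sm_1,\dots,\sm_n\}\in X_*$) --- gives $\ev^\Gamma_\Psi\cong V$.

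For part~(2) I would start identically: Corollary~\ref{cor:irr-fd-modules-restrictions} gives $V\cong\ev^\Gamma_{\sm_1^{\ell_1},\dots,\sm_n^{\ell_n}}(\rho_1,\dots,\rho_n)$ as above. Now Theorem~\ref{thm:untwisted-classification-type-I} applied over $A/\sm_i^{\ell_i}$ identifies $\rho_i$ with $V(M_i)$ for an irreducible generalized evaluation $(\g_{\bar 0}\otimes(A/\sm_i^{\ell_i}))$-module $M_i$, unique up to isomorphism, and Proposition~\ref{prop:classification-map-alg-modules} over $A/\sm_i^{\ell_i}$ identifies $M_i$ with $\theta_i\otimes\ev_{\Psi_i}$, where $\theta_i\in(\g_{\bar 0}^\ab\otimes(A/\sm_i^{\ell_i}))^*$ and, since $A/\sm_i^{\ell_i}$ is local, $\Psi_i$ is concentrated at the single point $\sm_i$, furnishing a class $\Psi_i(\sm_i)\in\cR(\g_{\bar 0}^\rss)$; again these data are unique up to isomorphism. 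Pulling back along $A\twoheadrightarrow A/\sm_i^{\ell_i}$ lets me regard $\theta_i\in\cL_{\sm_i}(X,\g_{\bar 0}^\ab)$, and I then set $\theta=\sum_{i=1}^n\sum_{\gamma\in\Gamma}\gamma\theta_i$ and $\Psi(\gamma\sm_i)=\gamma\Psi_i(\sm_i)$ (with $\Psi$ vanishing off $\bigcup_i\Gamma\sm_i$); these lie in $\cL(X,\g_{\bar 0}^\ab)^\Gamma$ and $\cE(X,\g_{\bar 0}^\rss)^\Gamma$ and are well defined since the $\Gamma$-orbits of $\sm_1,\dots,\sm_n$ are disjoint and $\Gamma$ acts freely. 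Choosing $\{\sm_1,\dots,\sm_n\}$ as the orbit representatives in the definition of $V^\Gamma(\theta,\Psi)$, the associated $(\g_{\bar 0}\otimes A)$-module $\theta_\sM\otimes\ev_{\Psi_\sM}$ is a multipoint generalized evaluation module whose $\sm_i$-factor is $M_i$, so the corollary to Lemma~\ref{lem:V(M)-product} gives $V(\theta_\sM\otimes\ev_{\Psi_\sM})\cong\bigotimes_{i=1}^n V(M_i)\cong\bigotimes_{i=1}^n\rho_i$ as $(\g\otimes A)$-modules, the $i$-th factor realized through $\ev_{\sm_i^{\ell_i}}$. Restricting to $(\g\otimes A)^\Gamma$ and using Lemma~\ref{lem:ev-image}, this restriction is precisely $\ev^\Gamma_{\sm_1^{\ell_1},\dots,\sm_n^{\ell_n}}(\rho_1,\dots,\rho_n)\cong V$, so $V\cong V^\Gamma(\theta,\Psi)$; this is surjectivity.

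It remains to prove injectivity in part~(2), injectivity in part~(1) being contained in Proposition~\ref{prop:E-enumerates}. Suppose $V^\Gamma(\theta,\Psi)\cong V^\Gamma(\theta',\Psi')$. A support computation (via Lemma~\ref{lem:kac-module-annihilator} together with the support of $\theta_\sM\otimes\ev_{\Psi_\sM}$) shows that $\Supp_A V^\Gamma(\theta,\Psi)=(\Supp_A\theta)\cup\Supp\Psi$, so $(\Supp_A\theta)\cup\Supp\Psi$ and $(\Supp_A\theta')\cup\Supp\Psi'$ coincide, and I fix a set $\sM$ of $\Gamma$-orbit representatives of this common set. Writing $\Ann_A V^\Gamma(\theta,\Psi)=(\g\otimes I)^\Gamma$ and restricting the module through $(\g\otimes A)^\Gamma/(\g\otimes I)^\Gamma\cong\bigoplus_{\sm\in\sM}\g\otimes(A/\sm^k)$ (the decomposition from the proof of Proposition~\ref{prop:fd-module-restriction}, for $k\gg0$), the irreducible $\g\otimes(A/\sm^k)$-factor occurring at each $\sm\in\sM$ is determined up to isomorphism by $V$ (uniqueness of tensor-factor decompositions, Corollary~\ref{cor:irreds-for-tensor-prods} iterated). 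The uniqueness statements in Theorem~\ref{thm:untwisted-classification-type-I} and Proposition~\ref{prop:classification-map-alg-modules} then recover $\theta$ and $\Psi$ on the $\Gamma$-orbit of each such $\sm$, and $\Gamma$-equivariance propagates this to all of $X_\rat$; hence $(\theta,\Psi)=(\theta',\Psi')$. The step I expect to be the crux is this last one --- making the passage from $V$ to its local factors at orbit representatives, and thence to the pairs $(\theta_\sm,\Psi_\sm)$, manifestly canonical. This is exactly where freeness of the $\Gamma$-action is indispensable (trivial isotropy is what renders each local factor an unambiguous $\g\otimes(A/\sm^k)$-module) and where one leans on the uniqueness clauses of the untwisted classification; the residual checks --- well-definedness and $\sM$-independence of $\ev^\Gamma_\Psi$ and $V^\Gamma(\theta,\Psi)$, and the identifications of annihilators and supports --- are routine and parallel the Lie algebra arguments of \cite{NSS09}.
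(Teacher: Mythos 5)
Your proposal is correct and follows the same route as the paper's (very terse) proof: deduce surjectivity from Corollary~\ref{cor:irr-fd-modules-restrictions} together with Theorems~\ref{thm:untwisted-classification-g0-semisimple} and~\ref{thm:untwisted-classification-type-I}, and reduce injectivity to the arguments of \cite[Prop.~4.15]{NSS09} via Proposition~\ref{prop:E-enumerates} and a local-factor/support analysis. You fill in the details the paper leaves implicit --- in particular the device of applying the untwisted theorems over the local Artinian quotients $A/\sm_i^{\ell_i}$, assembling the $\Gamma$-equivariant data $(\theta,\Psi)$ from the local $(\theta_i,\Psi_i)$, and invoking the tensor decomposition of Kac modules and Corollary~\ref{cor:irreds-for-tensor-prods} to recover the local factors uniquely --- and these details are all sound (the citation of Lemma~\ref{lem:ev-image} at the end of the surjectivity step is superfluous but harmless).
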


\begin{proof}
  The maps~\eqref{eq:enumeration-g0-semsimple} and~\eqref{eq:enumeration-g0-not-semsimple} are surjective by Corollary~\ref{cor:irr-fd-modules-restrictions} and Theorems~\ref{thm:untwisted-classification-g0-semisimple} and~\ref{thm:untwisted-classification-type-I}.  The proof of injectivity is almost identical to the proof of \cite[Prop.~4.15]{NSS09} and is therefore omitted.
\end{proof}

\begin{rem}
  In the case that $\g$ is a semisimple Lie algebra, twisting and untwisting functors were defined in \cite{FKKS12}.  These are isomorphisms between certain categories of representations of equivariant map algebras and their untwisted analogues.  These functors could also be defined in the super setting of the current paper.  We do not pursue this line of inquiry here.
\end{rem}

\begin{rem}
  A special case of Theorem~\ref{thm:classification-equivariant} gives a classification of irreducible finite dimensional representations of the multiloop superalgebras (see Example~\ref{eg:multiloop}).  In the untwisted case, this recovers the classification given in \cite{RZ04,Rao11}.  In the twisted case, the classification seems to be new.
\end{rem}


\bibliographystyle{alpha}
\bibliography{savage-map-superalgebra-biblist}

\end{document}